\providecommand{\U}[1]{\protect\rule{.1in}{.1in}}
\newtheorem{theorem}{Theorem}[section]
\theoremstyle{definition}
\newtheorem{remark}[theorem]{Remark}
\theoremstyle{plain}
\newtheorem{assumption}{Assumptions}
\newtheorem{corollary}[theorem]{Corollary}
\newtheorem{proposition}[theorem]{Proposition}
\numberwithin{equation}{section}
\newtheorem{theoremalph}{Theorem}
\def\a{\alpha}
\def\e{\epsilon}
\def\la{\lambda}
\def\p{\partial}
\def\grad{\nabla}
\def\D{\nabla}
\def\oo{\infty}
\def\div{\operatorname{div}}
\def\Div{\operatorname{div}}
\def\Ric{\operatorname{Ric}}
\def\Tr{\operatorname{tr}}
\def\R{\mathbb{R}}
\newcommand{\gen}[1]{\ensuremath{\langle #1\rangle}}
\newcommand{\norm}[1]{\left\lVert#1\right\rVert}
\begin{document}
\title[General Conformally Induced Mean Curvature Flow]{General Conformally Induced Mean Curvature Flow}
\author{Joshua Flynn}
\address{Joshua Flynn: CRM/ISM and McGill University\\
Montr\'{e}al, QC H3A0G4, Canada}
\email{joshua.flynn@mcgill.ca}
\author{Jacob Reznikov}
\address{Jacob Reznikov: McGill University\\
Montr\'{e}al, QC H3A0G4, Canada}
\email{yakov.reznikov@mail.mcgill.ca}

\date{\today}

\begin{abstract} 
  This paper continues the investigation of isoperimetric inequalities through volume preserving and area decreasing mean curvature type flows related to conformal Killing vector fields. 
  Results of this kind prior to this paper all studied convex hypersurfaces or hypersurfaces which are starshaped with respect to generalized dilations.
  This paper is the first to study results of this kind for hypersurfaces which are starshaped with respect to general conformal Killing vector fields perturbed by an isometric Killing vector field and our flows allow us to establish isoperimetric inequalities for a much wider class of hypersurfaces.
  For example, our results apply to hypersurfaces in $\R^{n+1}$ which are far from being starshaped in the traditional sense, but are starshaped with respect to the conformal Killing vector field composed of a dilation and a rotational vector field.
  The flow considered in this paper is a novel modification of the mean curvature-type flow first introduced by Guan and Li, which was later generalized by Guan-Li-Wang and Li-Pan.
\end{abstract}

\keywords{Mean curvature flow, Conformal Killing vector field, isoperimetric inequality, Guan-Li flow}
\maketitle
\tableofcontents

\section{Introduction}

The isoperimetric problem asks to find, among all domains of a given volume, those whose boundaries have minimal surface area.
A natural approach to resolve this problem is to consider volume preserving and surface area decreasing geometric flows of the boundary of a domain of a given volume.
In this paper we apply this approach to study the isoperimetric problem for a wide class of domains in general geometries admitting appropriate conformal Killing vector fields.
In the literature, mean curvature type flows have been used to prove isoperimetric inequalities in the category of convex or starshaped domains, where starshapedness has always been relative to a dilation-type conformal Killing vector field.
Our paper pushes this forward to a wider category of domains where the usual starshapedness is replaced by starshapedness relative to more general conformal Killing vector fields composed of a dilation-type part and a rotational-type part.
Under appropriate geometric assumptions, this allows one to establish the isoperimetric inequality by geometric flows in all dimensions $\geq 3$ for a wider category of domains and this is even new for Euclidean space.
To achieve this, we introduce a novel modification to a type of mean curvature flow first introduced by Guan-Li, which will now be described.

Recently this flow approach was used by Guan-Li in \cite{guan_mean_2013} where, based on the Minkowski identities, they introduced a volume preserving and area decreasing mean curvature type flow to prove the isoperimetric inequality for strictly starshaped closed hypersurfaces in space forms.
To state the flow on $\R^{n+1}$, let $\Sigma \subset \R^{n+1}$ be a smooth closed hypersurface enclosing the origin, let $H$ be the mean curvature, let $\nu$ be the outward unit normal, let $X$ be the position vector field and let $u = \gen{X,\nu}$ be the support function.
Then the Guan-Li flow is
\begin{equation}
  \frac{\p X}{\p t} = (n - Hu) \nu.
  \label{eq:guan-li-flow}
\end{equation}
Moreover, using the evolution equations for the volume enclosed $V=V(\Sigma)$ and surface area $A=A(\Sigma)$ of $\Sigma$, the Minkowski identities 
\begin{equation}
  (k+1) \int_{\Sigma} \sigma_{k+1}(\kappa) u = (n-k) \int_{\Sigma} \sigma_{k}(\kappa),\quad \sigma_{k}(\kappa) = k\text{ -curvature},
  \label{eq:general-minkowski-identities}
\end{equation}
give $\p_{t} V=0$ and $\p_{t} A \leq 0$.
A strictly starshaped domain is one for which $u >0$ and Guan-Li showed that the solution hypersurfaces to \eqref{eq:guan-li-flow} with strictly starshaped initial data $\Sigma_{0}$ exponentially converge to a sphere $S$ with $V(\Sigma_{0}) = V(S)$.

In \cite{guan_volume_2018}, Guan-Li-Wang significantly extended these results to a large class of warped product spaces.
More precisely, let $(N^{n+1},\overline{g})$ be a warped product space with closed base $(B^{n},\tilde{g})$ and warped metric 
\[
  \overline{g} = dr^{2} + \phi^{2} \tilde{g}, \quad r \in [r_{0},r_{1}],
\]
where $\phi = \phi(r)$ is a smooth positive function defined on $[r_{0},r_{1}]$.
On $N^{n+1}$ is the conformal Killing vector field $X = \phi(r)\p_{r}$ of dilation type.
Note that, on $\R^{n+1}$, the position vector field $X$ may be expressed as $X=x \cdot \grad = r \p_{r}$, and is a conformal Killing vector field corresponding to Euclidean dilation.
The Guan-Li-Wang flow on warped product spaces is then given by
\begin{equation}
  \frac{\p F}{\p t} = (n\phi' - Hu) \nu,
  \label{eq:guan-li-wang}
\end{equation}
where $F$ is a family of embeddings.
That \eqref{eq:guan-li-wang} is volume preserving and area decreasing follows from the conformality of $X$ and Minkowski identities on $N$, also shown in \cite{guan_volume_2018}.
In \cite{guan_volume_2018} they considered initial data $\Sigma_{0}$ which are smooth graphical hypersurfaces in $N$, namely, $\Sigma$ is defined via $r = \rho (p)$ for $p \in B$, where $\rho$ is some smooth function on $B$.
In particular, $u>0$ along $\Sigma_{0}$ and hence $\Sigma_{0}$ is starshaped.
To obtain existence of a smooth solution for infinite time and exponential convergence to a level set of $r$ as $t \to \oo$, they impose the ambient conditions $\Ric_{\tilde{g}} \geq (n-1) K \tilde{g}$ and $0 \leq (\phi')^{2} - \phi''\phi \leq K$ on $[r_{0},r_{1}]$.
(See also \cite{MR4429248} for an exploration on the latter condition.)
Their result is stated as follows.

\begin{theoremalph}[Guan-Li-Wang]
  Let $\Sigma_{0}$ be a smooth graphical hypersurface in $(N^{n+1},\overline{g})$ with $n \geq 2$.
  If $\phi(r)$ and $\tilde{g}$ satisfy the conditions
  \begin{align*}
    &(n-1)K \tilde{g}  \leq \Ric_{\tilde{g}}\\
    &0  \leq (\phi')^{2} - \phi''\phi \leq K \, \text{ on } \, [r_{0},r_{1}],
  \end{align*}
  where $K>0$ is a constant, then the evolution equation \eqref{eq:guan-li-wang} with $\Sigma_{0}$ as initial data has a smooth solution for $t \in \R_{\geq0}$.
  Moreover, the solution hypersurfaces converge exponentially to a level set of $r$ as $t \to \oo$.
  \label{intro-thm:guan-li-wang}
\end{theoremalph}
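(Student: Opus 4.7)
The plan is to follow the standard program for mean curvature type flows: establish short-time existence by parabolic theory, then obtain uniform $C^{0}$, $C^{1}$, and $C^{2}$ a priori estimates which, combined with parabolic bootstrap, yield long-time existence; finally, analyze the asymptotic behavior via a monotone functional and an exponential decay argument.

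First, since the speed $(n\phi'-Hu)$ depends on the second fundamental form through $H$, the evolution equation \eqref{eq:guan-li-wang} is quasilinear parabolic with principal symbol essentially $u$ times the induced Laplacian. As $\Sigma_{0}$ is graphical with $u|_{t=0}>0$, standard theory yields a unique smooth solution on a maximal interval $[0,T^{*})$. Writing $\Sigma_{t}$ as a graph $r=\rho(p,t)$ over $B$, the flow becomes a scalar quasilinear parabolic equation for $\rho$, and the first goal is to show $\rho$ remains in a compact subinterval of $[r_{0},r_{1}]$ while staying graphical.

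Next I would establish the \emph{a priori} estimates in the standard order. For $C^{0}$, apply the maximum principle to $\rho$ at spatial extrema and combine with volume preservation, which follows from the Minkowski identity \eqref{eq:general-minkowski-identities} with $k=0$ together with area monotonicity; this forces $\rho$ to stay in a compact slab. For $C^{1}$, I would compute the evolution of the support function $u$ and show that the hypothesis $(\phi')^{2}-\phi''\phi\geq 0$ combined with $\Ric_{\tilde{g}}\geq(n-1)K\tilde{g}$ produces a favorable sign in the zeroth order terms, so the maximum principle bounds $u$ below by a positive constant; this preserves strict starshapedness and hence graphicality. For $C^{2}$, I would evolve the mean curvature and the largest principal curvature separately; the upper bound $(\phi')^{2}-\phi''\phi\leq K$ together with the Ricci lower bound controls the bad reaction terms appearing through the Simons-type identity for the second fundamental form, giving uniform two-sided pinching.

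Given uniform $C^{2}$ control and $u$ bounded below, the flow is uniformly parabolic, so Krylov--Safonov plus Schauder bootstrap yield $C^{k,\alpha}$ bounds for all $k$, and therefore $T^{*}=\infty$. For exponential convergence, volume preservation with $\p_{t}A\leq 0$ and the isoperimetric lower bound for $A$ give $\int_{0}^{\infty}\int_{\Sigma_{t}}(n\phi'-Hu)^{2}\,d\mu_{t}\,dt<\infty$. A Minkowski-type identity combined with the curvature hypotheses should produce a Poincar\'e-type inequality controlling $\|n\phi'-Hu\|_{L^{2}}$ in terms of the oscillation of $\rho$, converting integral decay into exponential decay of the oscillation; smooth exponential convergence to a level set of $r$ then follows by interpolation with the higher-order bounds.

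The main obstacle I anticipate is the simultaneous closure of the $C^{1}$ and $C^{2}$ estimates: the evolution of the support function in a warped product involves curvature terms of $\overline{g}$ whose signs are not manifest, and the twin hypotheses $0\leq(\phi')^{2}-\phi''\phi\leq K$ and $\Ric_{\tilde{g}}\geq(n-1)K\tilde{g}$ appear precisely calibrated so that a single maximum principle argument controls $u$ from below and the second fundamental form from above. Identifying the correct auxiliary test function, together with the right combination of these hypotheses to produce a good sign in the evolution equations, is the crux of the proof.
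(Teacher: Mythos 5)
The paper does not prove Theorem \ref{intro-thm:guan-li-wang}; it is quoted verbatim from Guan--Li--Wang \cite{guan_volume_2018} as background. So there is nothing in this source to compare against line by line, but I can assess your sketch against what is known.

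Your overall architecture (short-time existence, $C^{0}$--$C^{1}$--$C^{2}$ a priori estimates, parabolic bootstrap, exponential convergence) is the right framework, but several steps would not close as written. The $C^{0}$ estimate does not need volume preservation at all: at a spatial maximum of $\rho$ the hypersurface is tangent to a level set, so $u=\phi$ and $H\geq n\phi'/\phi$ by comparison with the umbilical slice, hence the speed $n\phi'-Hu\leq0$ there; the maximum principle alone pins $\rho$ inside a compact slab. The real crux, which you correctly identify, is the $C^{1}$ bound; however Guan--Li--Wang do not work with $u$ directly but with the gradient function $\omega=\sqrt{1+|\tilde\nabla\gamma|^{2}}$ where $\gamma=\int d\rho/\phi$, and the two hypotheses $0\leq(\phi')^{2}-\phi''\phi\leq K$ and $\Ric_{\tilde g}\geq(n-1)K\tilde g$ enter precisely to make the reaction terms in the evolution of $\omega$ have a favorable sign at an interior spatial maximum. (Evolving $u$ directly, as you propose, is closer to the Li--Pan strategy, and is also workable.) Your proposed $C^{2}$ step is the weakest link: evolving $H$ gives only a one-sided (in fact linearly growing) bound, there is no preserved two-sided curvature pinching for merely starshaped data, and one should not expect a maximum-principle argument on the second fundamental form to yield a uniform $|A|$ bound here. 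The actual route is simpler and more robust: once $C^{0}$ and $C^{1}$ are uniform, the scalar PDE for $\rho$ (or $\gamma$) is uniformly parabolic quasilinear, and Krylov--Safonov already upgrades $C^{1}$ to $C^{1,\alpha}$ without any prior $C^{2}$ control; Schauder then gives $C^{2,\alpha}$ and higher. You have the order of Krylov--Safonov and the $C^{2}$ estimate reversed. Finally, for convergence, your integral-decay-plus-Poincar\'e route could be made to work but is indirect; the standard proof shows the gradient function itself decays exponentially from its evolution equation once the curvature terms have the right sign, which gives the $C^{1}$ convergence directly and the rest by interpolation.
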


In \cite{guan_volume_2018}, they also proved an isoperimetric inequality, which is now described.
Let $S(r)$ be a level set and $B(r)$ the region bounded by $S(r)$ and $S(r_{0})$.
Define the function $\xi:\R_{\geq0} \to \R_{\geq0}$ as the one satisfying $A(r) = \xi(V(r))$, where $A(r)$ is the area of $S(r)$ and $V(r)$ the volume of $B(r)$, noting that such a function is always well-defined.
The isoperimetric inequality proved by Guan-Li-Wang is in terms of $\xi$ and is stated in the following theorem.

\begin{theoremalph}[Guan-Li-Wang]
  Assume $N,\phi,\tilde{g}$ satisfy the hypotheses in Theorem \ref{intro-thm:guan-li-wang}.
  Let $\Omega \subset N$ be a domain bounded by a smooth graphical hypersurface $M$ and $S(r_{0})$.
  Then
  \begin{equation}
    \xi(Vol(\Omega)) \leq Area(M),
    \label{eq:guan-li-wang-isoperimetric-inequality}
  \end{equation}
  where $Vol(\Omega)$ is the volume of $\Omega$, $Area(M)$ the area of $M$ and $\xi$ is defined above.
  If we additionally assume either $(\phi')^{2} - \phi''\phi<K$ or $(n-1)K \tilde{g} < \Ric_{\tilde{g}}$ on $[r_{0},r_{1}]$, then equality is achieved in \eqref{eq:guan-li-wang-isoperimetric-inequality} if and only if $M$ is a level set of $r$.
  \label{intro-thm:guan-li-wang-isoperimetric}
\end{theoremalph}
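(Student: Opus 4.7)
The plan is to deduce the isoperimetric inequality \eqref{eq:guan-li-wang-isoperimetric-inequality} directly from Theorem \ref{intro-thm:guan-li-wang} by running the flow \eqref{eq:guan-li-wang} with the given graphical hypersurface $M$ as initial data $\Sigma_{0}$. By Theorem \ref{intro-thm:guan-li-wang} the resulting family $\Sigma_{t}$ is smooth for all $t \geq 0$ and converges exponentially to a level set $S(r_{*})$ of $r$. Writing $\Omega_{t}$ for the domain bounded by $\Sigma_{t}$ and $S(r_{0})$, the inequality will follow once I verify that the flow preserves $Vol(\Omega_{t})$ and does not increase $Area(\Sigma_{t})$.

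For volume preservation, the standard variational formula gives $\frac{d}{dt} Vol(\Omega_{t}) = \int_{\Sigma_{t}} (n \phi' - Hu)\, dA$, which vanishes by the warped-product Minkowski identity $(k+1)\int_{\Sigma}\sigma_{k+1}u = (n-k)\int_{\Sigma}\sigma_{k}\phi'$ (the warped analog of \eqref{eq:general-minkowski-identities}, derived in \cite{guan_volume_2018}) specialized to $k=0$, namely $\int_{\Sigma_{t}} Hu\,dA = n\int_{\Sigma_{t}}\phi'\,dA$. For area decrease I would compute
\[
  \frac{d}{dt} Area(\Sigma_{t}) = \int_{\Sigma_{t}} H(n\phi' - Hu)\,dA,
\]
and apply the $k=1$ case $(n-1)\int_{\Sigma_{t}} H\phi'\,dA = 2\int_{\Sigma_{t}}\sigma_{2} u\,dA$ to rewrite the right-hand side as $-\int_{\Sigma_{t}} u\bigl(H^{2} - \tfrac{2n}{n-1}\sigma_{2}\bigr)\,dA$. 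Since graphicality is preserved along the flow (hence $u>0$), the Newton--MacLaurin inequality $H^{2} \geq \frac{2n}{n-1}\sigma_{2}$ gives $\frac{d}{dt} Area(\Sigma_{t}) \leq 0$, with pointwise equality if and only if $\Sigma_{t}$ is totally umbilic.

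Exponential convergence then permits passage to the limit: $Vol(B(r_{*})) = \lim_{t\to\infty} Vol(\Omega_{t}) = Vol(\Omega)$ and $Area(S(r_{*})) = \lim_{t\to\infty} Area(\Sigma_{t}) \leq Area(M)$. By the definition of $\xi$ one has $Area(S(r_{*})) = \xi(Vol(B(r_{*}))) = \xi(Vol(\Omega))$, which yields \eqref{eq:guan-li-wang-isoperimetric-inequality}.

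For the equality case, if $M$ is already a level set of $r$, the flow is stationary and equality is immediate. Conversely, equality in \eqref{eq:guan-li-wang-isoperimetric-inequality} forces $Area(\Sigma_{t})$ to be constant in $t$, so the integrand in the area derivative vanishes pointwise and, since $u>0$, each $\Sigma_{t}$ must be totally umbilic. The main obstacle is then a rigidity step: under either strict assumption $(\phi')^{2} - \phi''\phi < K$ or $(n-1)K\tilde{g} < \Ric_{\tilde{g}}$, one must show that a closed totally umbilic graphical hypersurface in $N$ is necessarily a level set of $r$. I would handle this via the Codazzi equations applied to an umbilic second fundamental form (which force the umbilical function to depend only on $r$) together with the Gauss equation, and then use the strict curvature bounds on $\phi$ or on $\Ric_{\tilde{g}}$ to exclude any non-level-set umbilic configuration. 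This rigidity argument is the most delicate ingredient, since without the strictness one can produce umbilic hypersurfaces that are not level sets of $r$.
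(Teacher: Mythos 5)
The overall flow-based strategy is the right one and is precisely what Guan--Li--Wang do (the present paper treats Theorem~\ref{intro-thm:guan-li-wang-isoperimetric} as a citation, but the structure of its own Proposition~\ref{prop:minkowski-identities} and the corollary following it mirror the warped-product case). However, your $k=1$ Hsiung--Minkowski identity is misstated, and this hides the key role of the ambient curvature hypotheses. In a general warped product the correct identity reads (schematically, cf.\ the proof of Proposition~\ref{prop:minkowski-identities})
\[
  2\int_{\Sigma}\sigma_2\, u\, d\mu
  \;=\;
  (n-1)\int_{\Sigma} H\phi'\, d\mu
  \;-\;
  \int_{\Sigma} u\left(\overline{\Ric}(\partial_r,\partial_r)-\overline{\Ric}(\nu,\nu)\right) d\mu,
\]
because $\Div T_1$ acting on the tangential part of the position field produces a Ricci term; the identity you wrote only holds in Einstein ambient spaces. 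Consequently,
\[
  \frac{d}{dt}\,\mathrm{Area}(\Sigma_t)
  \;=\;
  -\int_{\Sigma_t} u\Bigl(H^2-\tfrac{2n}{n-1}\sigma_2\Bigr)\, d\mu
  \;+\;
  \frac{n}{n-1}\int_{\Sigma_t} u\left(\overline{\Ric}(\partial_r,\partial_r)-\overline{\Ric}(\nu,\nu)\right) d\mu,
\]
and one needs \emph{both} nonpositivity statements: Newton--MacLaurin for the first integral, and the fact that the hypotheses $\Ric_{\tilde g}\geq (n-1)K\tilde g$ and $(\phi')^2-\phi\phi''\leq K$ force $\overline{\Ric}(\partial_r,\partial_r)\leq\overline{\Ric}(\nu,\nu)$ for the second. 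Your proposal never uses the curvature hypotheses, so as written it does not actually establish area monotonicity.

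This also affects your equality analysis, which is where the omission costs the most. Equality forces both integrands to vanish: each $\Sigma_t$ is totally umbilic \emph{and} $\overline{\Ric}(\nu,\nu)=\overline{\Ric}(\partial_r,\partial_r)$ pointwise. Under either strictness hypothesis, the Ricci comparison $\overline{\Ric}(V,V)>\overline{\Ric}(\partial_r,\partial_r)$ is strict for unit $V\perp\partial_r$, so the equality of Ricci curvatures immediately forces $\nu\parallel\partial_r$. Since $M$ is graphical and connected, $\nu=\partial_r$ and $M$ is a level set. There is no need for the Codazzi-plus-rigidity argument you outline; in fact that route is fragile, because there do exist closed totally umbilic graphical hypersurfaces that are not level sets of $r$ when the Ricci term is not controlled. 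The clean rigidity comes straight out of the corrected Minkowski identity, which is where the additional strict assumption is consumed.
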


\medskip

Very recently, the results of  Guan-Li-Wang in \cite{guan_volume_2018} were extended by Jiayu Li and Pan in \cite{jiayu_isoperimetric_2023} to more general manifolds admitting a dilation-type conformal Killing vector field.
To state their results, let $(N^{n+1},\overline{g})$ be a general Riemannian manifold endowed with a complete conformal Killing vector field $X$, let $\varphi = \frac{\div X}{n+1}$ and let $\mathcal{F}(X)$ be the foliation in $N' = N \setminus \left\{ X=0 \right\}$ determined by the $n$-dimensional distribution $\mathcal{D} = \left\{ Y \in TN: \overline{g}(X,Y) = 0 \right\}$.
Lastly, call a closed hypersurface $\Sigma \subset N'$ strictly starshaped relative to $X$ provided $\gen{X,\nu} > 0$ along $\Sigma$.
Li-Pan considered the flow
\begin{equation}
  \frac{\p F}{\p t} = (n \varphi - u H)\nu,
  \label{eq:li-pan-flow}
\end{equation}
where $u = \gen{X,\nu}$ and the initial data $\Sigma_{0} \subset N'$ is assumed to be a closed embedded hypersurface that is strictly starshaped.
Note that \eqref{eq:li-pan-flow} reduces to \eqref{eq:guan-li-wang} in the warped product setting when $X$ is chosen to be $\phi(r)\p_{r}$.
To state one of their main results, we introduce the following set of assumptions.

\begin{assumption}
  Let $(N^{n+1},\overline{g})$ be a Riemannian manifold, let $X \in TN$ and let $N' = N \setminus \left\{ X=0 \right\}$.
  \begin{enumerate}[(i)]
    \item $X$ is a conformal Killing vector field, i.e., $\mathcal{L}_{X} \overline{g} = 2 \varphi \overline{g}$;
    \item $\varphi = \frac{\div_{\overline{g}}X}{n+1} > 0$ on $N'$;
    \item $\varphi^{2} - X(\varphi)>0$ on $N'$;
    \item defining the distribution $\mathcal{D} = \left\{  Y \in TN: \overline{g}(X,Y) = 0\right\}$, let $\mathcal{F}(X)$ be the corresponding foliation in $N'$ and assume each connected leaf of $\mathcal{F}(X)$ is a constant mean curvature closed hypersurface and a level set of $|X|/\varphi$;
    \item for all $Y \in TN'$, there holds
      \[
	|X|^{2} \Ric_{\overline{g}}(Y,Y) - |Y|^{2} \Ric_{\overline{g}}(X,X) \geq 0,
      \]
      i.e., the direction $X/|X|$ is of least Ricci curvature in $N'$.
  \end{enumerate}
  \label{assumption1}
\end{assumption}

\begin{theoremalph}[Li-Pan]
  Let $N$ and $X$ satisfy Assumptions \ref{assumption1} and let $\Sigma_{0} \subset N'$ be a closed embedded hypersurface strictly starshaped relative to $X$.
  Then the flow \eqref{eq:li-pan-flow} with initial data $\Sigma_{0}$ has a smooth solution for infinite time and which converges to a totally umbilical hypersurface whose unit normal field $\nu_{\oo}$ attains least Ricci curvature on $N'$, namely,
  \[
    \Ric_{\overline{g}}(\nu_{\oo},\nu_{\oo}) = \Ric_{\overline{g}}(\mathcal{N},\mathcal{N}),
  \]
  where $\mathcal{N} = X/|X|$.
  \label{intro-thm:li-pan}
\end{theoremalph}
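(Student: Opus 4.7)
The plan is to follow the standard template for mean curvature type flows: derive the controlling Minkowski-type identity, establish $C^{0},C^{1},C^{2}$ a priori estimates, apply Krylov--Safonov and Schauder theory for smooth long-time existence, and finally extract convergence from the monotonicity of area. The starting point is the identity
\[
  \int_{\Sigma}(n\varphi - uH)\, d\mu = 0,
\]
which follows from the conformal Killing property $\mathcal{L}_{X}\overline{g} = 2\varphi\overline{g}$: computing $\Div_{\Sigma} X^{T} = n\varphi - uH$ along $\Sigma$ and applying the divergence theorem. This yields volume preservation $\p_{t} V = 0$ for \eqref{eq:li-pan-flow}, while $\p_{t} A = -\int_{\Sigma}(uH - n\varphi)H$ is nonpositive by a Cauchy--Schwarz argument that uses the identity above.

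For the a priori estimates, we first show that strict starshapedness $u>0$ is preserved. Compute $\p_{t} u$ along the flow using that $X$ is conformal Killing; assumption (iii), $\varphi^{2} - X(\varphi) > 0$, supplies the favorable sign for the zero-order reaction term so that the parabolic maximum principle yields a positive lower bound on $u$ (or equivalently on $u/\varphi$). Next, assumption (iv) lets us introduce a radial-like coordinate on $N'$ by declaring each leaf of $\mathcal{F}(X)$ to be a level set of essentially $|X|/\varphi$. Applying the maximum principle to this coordinate restricted to $\Sigma_{t}$, together with the $u$-bound and volume preservation to prevent the flow from collapsing into $\{X=0\}$ or escaping to infinity, yields the $C^{0}$ confinement of $\Sigma_{t}$ to a fixed compact subregion of $N'$. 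The $C^{1}$ gradient estimate, equivalent to a uniform positive lower bound on $u/|X|$, follows from the maximum principle applied to an auxiliary quantity such as $|X|^{2}/u^{2}$ or $\log(u/\varphi)$; assumption (iii) is again what makes the zero-order terms cooperate.

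The main obstacle is the $C^{2}$ curvature estimate, since outside of the warped product setting we lose the orthogonal decomposition exploited by Guan--Li--Wang. We compute the evolution of $H$ and of the trace-free second fundamental form; the Simons-type identity generates ambient curvature terms involving $\Ric_{\overline{g}}$ and derivatives of $\varphi$, and these must be controlled using (iii) and (v) simultaneously. The key observation is that at a spatial maximum of a scale-invariant quantity such as $(|h|^{2}-H^{2}/n)/u^{2}$, the Ricci contribution involves precisely the combination $|X|^{2}\Ric_{\overline{g}}(\nu,\nu) - \Ric_{\overline{g}}(X,X)$, which has the correct sign by (v), while (iii) handles the remaining $\varphi$-derivative terms. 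With $C^{0},C^{1},C^{2}$ bounds in hand, the solution has uniformly bounded geometry in a fixed compact set and standard Krylov--Safonov plus Schauder theory gives a smooth solution for all $t \in \R_{\geq 0}$.

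For convergence, integrating area monotonicity over $[0,\oo)$ forces $\int_{\Sigma_{t}} (uH - n\varphi)^{2}$ to be integrable in time, and combined with the a priori bounds and interior parabolic regularity this yields $uH - n\varphi \to 0$ smoothly. Any smooth subsequential limit $\Sigma_{\oo}$ is therefore stationary for \eqref{eq:li-pan-flow}, and the equality case in the earlier Cauchy--Schwarz step, combined with the linearization of the curvature evolution at the limit, forces $h_{ij} = (H/n) g_{ij}$, i.e., total umbilicity. Finally, at a closed totally umbilical $\Sigma_{\oo} \subset N'$ which is stationary for \eqref{eq:li-pan-flow}, the Codazzi equation together with (iv) constrains the normal direction, and compatibility with (v) — reading the Ricci inequality as saturated along $\nu_{\oo}$ — forces $\Ric_{\overline{g}}(\nu_{\oo},\nu_{\oo}) = \Ric_{\overline{g}}(\mathcal{N},\mathcal{N})$, completing the theorem. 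Exponential convergence, if desired, comes from a spectral-gap/linearization argument at the stationary limit, again using (iii) and (v) to ensure the linearized operator is strictly coercive on the relevant subspace.
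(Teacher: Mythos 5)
This theorem is cited from Li--Pan \cite{jiayu_isoperimetric_2023} rather than reproved in the paper, so a line-by-line comparison is not possible; however, the paper develops the key ingredients (Proposition \ref{prop:minkowski-identities}, the evolution equations of Section \ref{sec:evolution-equations}, and the discussion after Corollary \ref{cor:H-bound} of Li--Pan's $C^{2}$ test function), and against these your sketch has one outright error and two significant deviations.

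The error is in the first paragraph. The area-decreasing property does \emph{not} follow from the identity $\int_{\Sigma}(n\varphi - uH)=0$ via ``a Cauchy--Schwarz argument.'' No weighted Cauchy--Schwarz applied to that identity gives $\int_{\Sigma}(n\varphi - uH)H \le 0$. What is actually needed is the second, $k=1$, Minkowski-type identity of Proposition \ref{prop:minkowski-identities},
\[
  \int_{\Sigma} H(n\varphi - Hu)
  = \frac{n}{n-1}\int_{\Sigma} u\bigl(\overline{\Ric}(\mathcal{N},\mathcal{N}) - \overline{\Ric}(\nu,\nu)\bigr)
  - \int_{\Sigma} \sum_{i<j}(\kappa_i-\kappa_j)^2\,u,
\]
whose derivation runs through the Newton transformation divergence formula and the Ricci relations of Proposition \ref{prop:ric}. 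Both terms on the right are nonpositive, by Assumption \ref{assumption1}(v) and by Newton--MacLaurin together with $u>0$. This identity, rather than Cauchy--Schwarz plus the zeroth identity, is the mechanism you need, and it is also exactly what supplies the endgame of the proof: at a stationary limit the left side vanishes, so \emph{both} nonnegative integrands must vanish, giving total umbilicity \emph{and} $\overline{\Ric}(\mathcal{N},\mathcal{N})=\overline{\Ric}(\nu,\nu)$ pointwise in one stroke, with $u>0$ doing the work. Your final paragraph tries to extract the Ricci equality indirectly from Codazzi and assumption (iv); that is a detour, and it is not clear it closes without invoking this identity anyway.

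The second deviation is the $C^{2}$ estimate. You propose applying the maximum principle to the pinching quantity $(|h|^{2}-H^{2}/n)/u^{2}$. That is a Huisken-type test function suited to convex flows; in the starshaped setting of Guan--Li/Li--Pan there is no convexity to preserve and no reason for that quantity's evolution to close under the maximum principle. Li--Pan instead bound $H$ directly, using the test function $\phi = H + |X|^{2}/\varphi^{2}$, as the discussion after Corollary \ref{cor:H-bound} in this paper indicates; the favorable Ricci sign $n\varphi(\overline{\Ric}(\mathcal{N},\mathcal{N})-\overline{\Ric}(\nu,\nu)) \le 0$ enters the evolution of $H$ (Theorem \ref{thm:evolution-equation-for-H}) directly, and the $\lambda$-part of the test function absorbs the remaining bad terms. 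You should either switch to that test function or justify why a pinching quantity works without convexity. The remainder of your plan --- $C^{0}$ confinement via the maximum principle applied to $\lambda = |X|^{2}/\varphi^{2}$, the $u>0$ preservation, Krylov--Safonov and Schauder, and the integral-in-time argument for stationarity of the limit --- is consistent with the paper's Sections \ref{sec:evolution-equations}--\ref{sec:convergence} and the Appendix.
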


We note that Assumptions \ref{assumption1}(iv) implies that each leaf of $\mathcal{F}(X)$ is totally umbilical with mean curvature $n \varphi/|X|$ (see \cite[Prop. 2.3]{jiayu_isoperimetric_2023}).
Moreover, Assumptions \ref{assumption1}(iii) is equivalent to the (strengthened) Guan-Li-Wang assumption $0 < (\phi')^{2} - \phi''\phi$ when $N^{n+1}$ has a warped product structure.

\medskip 

Our main result is that, by introducing a novel modification of the mean curvature-type flows considered in \cite{guan_mean_2013,guan_volume_2018,jiayu_isoperimetric_2023} and using certain techniques  from \cite{jiayu_isoperimetric_2023}, the conclusions of Theorem \ref{intro-thm:li-pan} hold for more general initial data $\Sigma_{0}$ which are starshaped with respect to more general conformal Killing vector fields.
Our approach is to replace the conformal Killing vector field considered by Li-Pan with a time-dependent conformal Killing vector field which splits into two parts: $X(t) = X^{\top}(t) + X^{\perp}$, where $X^{\top}(t)$ is a time-dependent Killing vector field vanishing in finite time $T_{0}$ and $X^{\perp}$ is a conformal Killing vector field of the same kind considered by Li-Pan. 
We show that, if $\Sigma_{0}$ is strictly starshaped relative to $X(0)$, then we can flow it to a surface which is starshaped relative to $X(T_{0}) = X^{\perp}$ and whence apply Theorem \ref{intro-thm:li-pan}.
We note that $T_{0}=T_{0}(\Sigma_{0})$ and we need only additionally assume $X^{\top}$ generates an integrable distribution in a sufficiently large neighborhood of $\Sigma_{0}$.
Informally, we may consider $X^{\perp}$ as a dilation-type vector field and $X^{\top}$ as a rotational-type vector field; comparing with the Euclidean space makes this interpretation obvious.
We emphasize that our paper is inspired by the work of Li-Pan in \cite{jiayu_isoperimetric_2023} and that, while many calculations reduce to that of \cite{jiayu_isoperimetric_2023} when $X^\top = 0$, the case $X^\top$ is nonzero introduces significant and interesting obstacles that were nontrivial to overcome in this paper.

Our results also improve those of Guan-Li \cite{guan_mean_2013} and Guan-Li-Wang \cite{guan_volume_2018}, and in particular the Euclidean setting.
Recall that their results applied to $\R^{n+1}$ only hold for initial data starshaped relative to the dilation vector field $\rho \p_{\rho}$, i.e., starshaped domains in the traditional sense.
Our results apply to the case $\rho \p_{\rho}$ is perturbed by an isometric Killing vector field arising from rotation; e.g., we may take 
\begin{align*}
  X(0) &= \sum_{j=1}^{n+1} x_{j} \p_{x_{j}} + x_{1} \p_{x_{2}} - x_{2} \p_{x_{1}},
\end{align*}
where
\begin{align*}
  X^{\perp} &= \sum_{j=1}^{n+1} x_{j} \p_{x_{j}} = \rho \p_{\rho}, \qquad X^{\top} = x_{1}\p_{x_{2}} - x_{2} \p_{x_{1}}.
\end{align*}
It is important to note that hypersurfaces starshaped relative to $X(0)$ of this kind may be very far from being starshaped in the usual sense (see Figures \ref{spiral1} and \ref{spiral2}) and therefore our paper provides a unified proof for a larger class of domains.
This is substantial because a unified proof of the isoperimetric inequality via geometric flows in $\R^{n+1}$ for all $n \geq 3$ has yet to be given.

\begin{figure}[!htb]
   \begin{minipage}{0.48\textwidth}
     \centering
     \includegraphics[width=.7\linewidth]{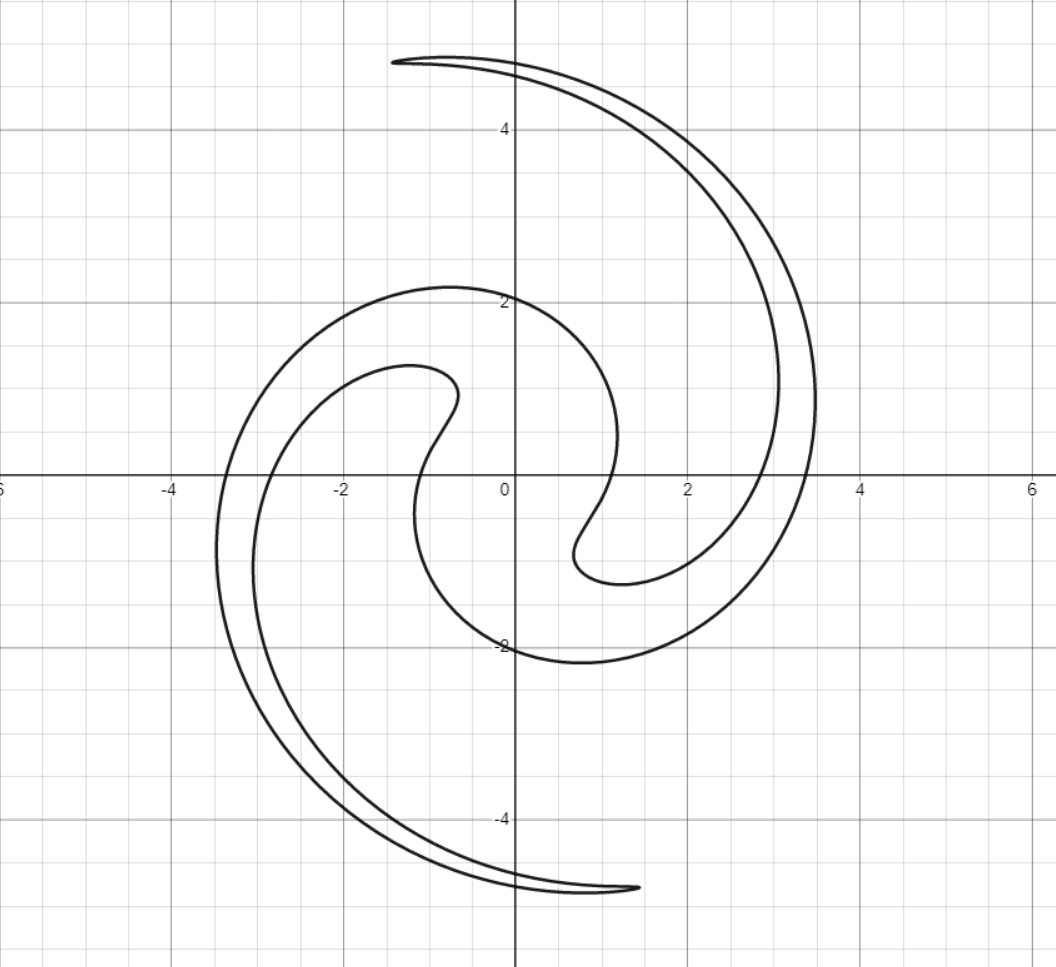}
     \caption{}
     \label{spiral1}
   \end{minipage}\hfill
   \begin{minipage}{0.48\textwidth}
     \centering
     \includegraphics[width=.7\linewidth]{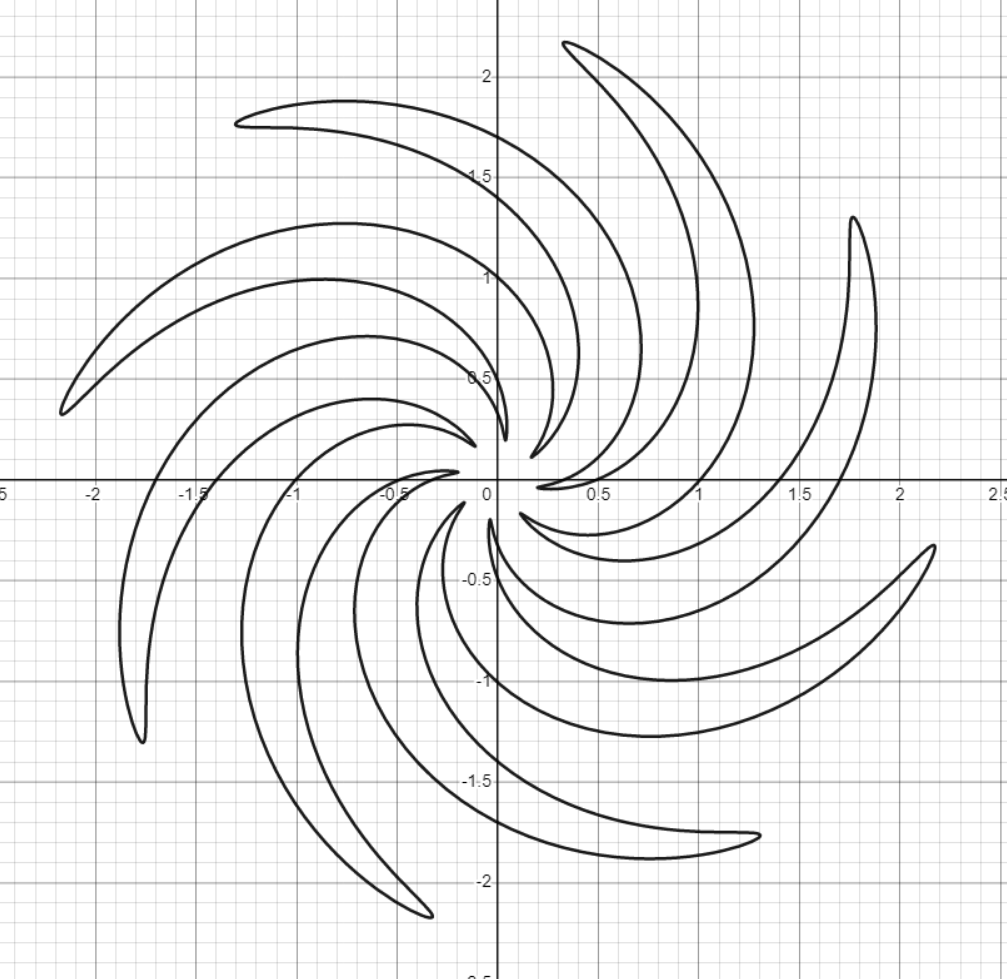}
     \caption{}
     \label{spiral2}
   \end{minipage}
\end{figure}

\medskip

To precisely state our main result, we fix some notation.
Let $(N^{n+1},\overline{g})$, $n\geq2$, be a Riemannian manifold, let $U \subset N$ be an open subset admitting a conformal Killing vector field $X \in TU$, let $u = \gen{X,\nu}$ be the support function for a given hypersurface in $U$ with outward unit normal $\nu$, let $\mathfrak{F} = \left\{ S_{\a} \right\}_{\a}$ be a codimension 1 foliation of $U$ by compact connected leaves $S_{\a}$, let $\mathcal{D} \subset TU$ be the induced distribution, let $P_{D}$ denote the orthogonal projection sending $x \in T_{p}U$ to $P_{D}x \in \mathcal{D}_{p}$ for $p \in U$ and let $\mathcal{D}^{\perp}$ denote the orthogonal complement bundle of $\mathcal{D}$.
For $Y \in TU$, write $Y^{\top} = P_{D}Y$ and whence the decomposition $Y = Y^{\top} + Y^{\perp}$, $Y^{\perp} \in \mathcal{D}^{\perp}$.
Relative to the leaves of $\mathfrak{F}$, we understand $X^{\top}$ and $X^{\perp}$ as the tangential and perpendicular parts of $X$.
Lastly, we introduce the following symmetry assumptions on $(N,U,\mathfrak{F},X)$.
\begin{assumption}
  Let $(N^{n+1},\overline{g})$ be a Riemannian manifold, $U \subset N$ an open subset, $\mathfrak{F}$ a foliation of $U$ and $X \in TU$.
  \noindent 
  \begin{enumerate}[(i)]
    \item $X$ is a conformal Killing vector field on $U$, i.e., $\mathcal{L}_{X} \overline{g} = 2 \varphi \overline{g}$;
    \item $\varphi= \frac{\div_{\overline{g}} X}{n+1} > 0$ and $X^{\perp} \neq 0$ in $U$;
    \item each connected leaf of $\mathfrak{F}$ is strictly starshaped with respect to $X$, i.e., $u > 0$ along each $S_{\a}$;
    \item the mapping $\lambda : U \to \R$, $p \mapsto |X^{\perp}|^{2}/\varphi^{2}$ is constant along each connected leaf of $\mathfrak{F}$;
    \item using (iv) to write $\overline\grad \lambda = 2\Lambda X^{\perp}$ for some smooth function $\Lambda$, then $\Lambda>0$ on $U$;
    \item additionally we assume that $\Lambda \varphi^3 - X^\top(\varphi) > 0$ on $U$;
    \item $X^{\top}$ is a Killing vector field on $U$;
    \item the distribution $\mathcal{E} = \left\{ Y \in TU : (Y,X^{\top})=0 \right\}$ is integrable on $U \setminus \left\{ X^{\top}=0 \right\}$, equivilantly $(X^\top)^\sharp \wedge \mathrm{d} ((X^\top)^\sharp) = 0$ where $(X^\top)^\sharp$ is the dual one-form given by the metric.
    \item for all $Y \in TU$, there holds
      \[
	|X^{\perp}|^{2} \Ric_{\overline{g}}(Y,Y) - |Y|^{2} \Ric_{\overline{g}}(X^{\perp},X^{\perp}) \geq 0,
      \]
      i.e., the direction $\mathcal{N}^{\perp} := X^{\perp}/|X^{\perp}|$ is of least Ricci curvature in $U$.
    \item The same condition holds for $X^\top$, in particular wherever $X^\top \neq 0$ we have 
    \[
      \Ric_{\overline{g}}(\mathcal{N}^\top, \mathcal{N}^\top) = \Ric_{\overline{g}}(\mathcal{N}^\perp, \mathcal{N}^\perp),
    \]
    where $\mathcal{N}^{\top} = X^{\top} / |X^{\top}|$.
  \end{enumerate}
  \label{assumption2}
\end{assumption}

We remark that Assumptions \ref{assumption2} is a formal description of a space $N$ which admits (on $U$) a dilation-type vector field $X^{\perp}$ and compatible rotational-type vector field $X^{\top}$ whose integral curve flow fixes each leaf $S_{\a} \in \mathfrak{F}$.
As such, spaces which satisfy these assumptions naturally generalize the setting of space forms since these naturally have vector fields which satisfy Assumptions \ref{assumption2}.

We now describe the flow we study in this paper.
Let $X$ satisfy Assumptions \ref{assumption2} and let $\Xi:\R_{\geq0} \to \R_{\geq0}$ be a decreasing smooth function satisfying $\Xi(0) = 1$ and $\Xi(t)=0$ for $t \geq 1$.
Define the time-dependent vector field $X(t) = X^{\perp} + \Xi(t) X^{\top}$ and corresponding support function $u(t) = \gen{\nu,X(t)}$.
It is clear $X(t)$ satisfies Assumptions \ref{assumption2} for $t<1$ whenever $X(0)=X$ does.
The flow we consider is 
\[
  \frac{\p F}{\p t} = (n \varphi - u(t) H)\nu = (n \varphi - \gen{X^{\perp} + \Xi(t) X^{\top},\nu}H)\nu.
\]
Note that, for $t \geq 1$, this flow agrees with the flow \eqref{eq:li-pan-flow} considered by Li-Pan.
Our main result may now be stated.

\begin{theorem}
  Let $(N,U,\mathfrak{F},X)$ satisfy Assumptions \ref{assumption2} and let $\Sigma_{0} \subset U$ be a closed embedded hypersurface strictly starshaped relative to $X=X(0)$.
  Then there exists a $T_{0}=T_{0}(\Sigma_{0})>0$ such that the flow
  \begin{equation}
    \frac{\p F}{\p t} = (n \varphi - u(t/T_{0}) H)\nu
    \label{eq:main-flow}
  \end{equation}
  with initial data $\Sigma_{0}$ has a smooth solution for infinite time that converges to a totally umbilical hypersurface $\Sigma_{\oo}$ whose unit normal field $\nu_{\oo}$ attains least Ricci curvature on $U$, namely,
  \[
    \Ric_{\overline{g}}(\nu_{\oo},\nu_{\oo}) = \Ric_{\overline{g}}(\mathcal{N}^{\perp},\mathcal{N}^{\perp}),
  \]
  where $\mathcal{N}^{\perp} = X^{\perp}/|X^{\perp}|$.
  \label{thm:main}
\end{theorem}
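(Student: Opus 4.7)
The plan is to split the flow into two phases separated by the cut-off time $T_0$. For $t \geq T_0$ one has $\Xi(t/T_0) = 0$, so \eqref{eq:main-flow} reduces identically to the Li-Pan flow \eqref{eq:li-pan-flow} driven by $X^\perp$, and Assumption \ref{assumption2} restricted to $X^\perp$ recovers the hypotheses of Assumption \ref{assumption1} (conformality with the same factor $\varphi$, the foliation $\mathfrak{F}$ having $X^\perp$-orthogonal leaves by (iv)--(v), the inequality $\varphi^2 - X^\perp(\varphi)>0$ following from (v)--(vi), and (ix) supplying the Ricci condition). Consequently, if at time $T_0$ the hypersurface $\Sigma_{T_0}$ is strictly starshaped with respect to $X^\perp$, Theorem \ref{intro-thm:li-pan} furnishes a smooth solution on $[T_0,\infty)$ converging to a totally umbilical limit with the claimed Ricci property. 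Everything therefore hinges on Phase 1: construct a smooth solution on $[0,T_0]$ and preserve strict positivity of the time-dependent support function $u(t,\cdot) := \langle \nu, X^\perp + \Xi(t/T_0) X^\top\rangle$ up to $t = T_0$, where it reduces to $\langle \nu, X^\perp\rangle$.

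For Phase 1, I would invoke standard quasilinear parabolic theory for short-time existence, writing $\Sigma_t$ as a graph over a leaf of $\mathfrak{F}$. Because $X^\top$ is Killing by Assumption \ref{assumption2}(vii), the time-dependent field $X(t/T_0) = X^\perp + \Xi(t/T_0) X^\top$ remains conformally Killing with the same conformal factor $\varphi$ for every $t$, so the Minkowski identities \eqref{eq:general-minkowski-identities} hold pointwise in $t$ and the flow remains volume-preserving and area-decreasing on $[0,T_0]$. The central object is $u$; differentiating against the evolution equations for $\nu$ and for $X(t/T_0)\circ F$ and using the (conformal) Killing equations should yield a parabolic identity of the schematic form
\[
  \partial_t u \;=\; \Delta u \;+\; \langle b, \nabla u\rangle \;+\; c\,u \;+\; T_0^{-1}\Xi'(t/T_0)\,\langle \nu, X^\top\rangle,
\]
with $b,c$ computable from $\varphi$, the second fundamental form, and $X^\top(\varphi)$. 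A priori estimates on $[0,T_0]$ follow by Gronwall-type arguments: starting from the bounds on $\Sigma_0$, one controls $u$, $H$ and $|A|^2$ on a short interval using Assumption \ref{assumption2}(vi) — which contributes a definite positive zeroth-order term $\Lambda\varphi^3 - X^\top(\varphi)>0$ — together with the volume and area control. The decisive step is to calibrate $T_0 = T_0(\Sigma_0)$ so that the graph does not degenerate before the cut-off terminates and, simultaneously, the forcing term $T_0^{-1}\Xi'(t/T_0)\langle \nu, X^\top\rangle$ is dominated by the smoothing and the good zeroth-order terms, so that a parabolic maximum principle applied to $u$ (for instance, comparing $u$ with $c_0 e^{-Kt}$ for suitable $K$) gives $u(t,\cdot) \geq c_0/2 > 0$ throughout $[0,T_0]$.

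The principal obstacle is precisely this Phase 1 analysis. In the Li-Pan setting the monotonicity of $\min u$ follows essentially from the structure of \eqref{eq:li-pan-flow} itself, whereas the time-dependence of $X(t/T_0)$ introduces an uncontrolled-sign source term which competes with the good structure of the flow; this is the \emph{significant and interesting obstacle} foreshadowed by the authors. Balancing $T_0$, the profile of $\Xi$, and the maximum principle for $u$ so that strict starshapedness survives until the cut-off vanishes, while simultaneously keeping the graph function and the second fundamental form controlled, is where the bulk of the technical work lies. Once Phase 1 closes with $\langle \nu, X^\perp\rangle(T_0,\cdot)>0$, Theorem \ref{intro-thm:li-pan} takes over and completes the proof by concatenation, yielding the convergence to a totally umbilical $\Sigma_\infty$ with $\Ric_{\overline g}(\nu_\infty,\nu_\infty) = \Ric_{\overline g}(\mathcal{N}^\perp,\mathcal{N}^\perp)$.
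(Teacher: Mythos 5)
Your two-phase architecture --- flow until the cutoff $\Xi$ vanishes, then concatenate with the Li-Pan result --- is exactly the paper's strategy, and you correctly identify the extra source term $T_0^{-1}\Xi'(t/T_0)\langle\nu,X^\top\rangle$ in the evolution of $u$ as the new obstruction, as well as the need to calibrate $T_0$ so that this term is dominated. But your Phase~1 mechanism is not the one that closes, and there is a missing ingredient.

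The issue is that to make $T_0^{-1}\Xi'(t/T_0)\langle\nu,X^\top\rangle$ small you must take $T_0$ \emph{large}, so $[0,T_0]$ is a long interval; a ``Gronwall-type estimate on a short interval'' cannot bridge it, and in any case the constants in such an estimate would depend on where in $U$ the flow wanders, which is a priori uncontrolled. The paper's argument is structured differently and in an essential way: first one proves a maximum principle for the scale function $\lambda = |X^\perp|^2/\varphi^2$ (Theorem \ref{thm:evolution-equation-for-S} and Corollary \ref{cor:S-is-bounded}), showing $\min_{\Sigma_0}\lambda \le \lambda \le \max_{\Sigma_0}\lambda$ for all time. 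This confines the flow to a fixed compact set $\mathcal{K}\subset U$ determined by $\Sigma_0$, which is what upgrades the pointwise strict inequality $\Lambda\varphi^3 - X^\top(\varphi)>0$ of Assumption \ref{assumption2}(vi) to a uniform lower bound $\ge c>0$, and what gives a uniform bound on $|X^\top|$ over the flow. Only then can $T_0$ be chosen large enough so that $|T_0^{-1}\Xi'(t/T_0)X^\top| < nc - \varepsilon$ on $\mathcal{K}$ for all $t$. Second, the barrier for $u$ is not exponential: the paper proves $\|H\|_\infty \le a + bt$ via a maximum principle for $H$ (Corollary \ref{cor:H-bound}, again using compactness), and then the maximum principle for $u$ shows $u$ cannot decrease past a threshold of the form $1/(A+Bt)$, giving $u>0$ for all finite time. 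Without the $\lambda$-bound the whole chain fails, so that step is the missing idea, not a technicality. The remainder of your sketch (handing off to Theorem \ref{intro-thm:li-pan} once $\Xi=0$, and checking that Assumptions \ref{assumption2} restricted to $X^\perp$ recover Assumptions \ref{assumption1}) matches the paper.
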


Strengthening Assumptions \ref{assumption1}(v) with 
\begin{enumerate}
  \item[\textit{(v)'}] \textit{The direction $\mathcal{N}^{\perp} = X^{\perp}/|X^{\perp}|$ is the only of least Ricci curvature $N'$}
\end{enumerate}
Li-Pan concluded that the limit surface of their flow is a leaf in $\mathcal{F}(X)$ and that the isoperimetric inequality holds.
We show that this assumption is unnecessary in Section \ref{sec:convergence} and so we obtain the same convergence result and an improvement on the isoperimetric inequalities in \cite{guan_mean_2013,guan_volume_2018,jiayu_isoperimetric_2023}.
In preparation, let $S(r) = \left\{ \la^{1/2} := |X^{\perp}|/\varphi = r  \right\}$ be a level set of $\la^{1/2}$ and let $B(r,r_{0})$ denote the domain bounded between $S(r)$ and $S(r_{0})$ for $r > r_{0} \geq 0$.
Then the following corollary is a direct consequence of Theorem \ref{thm:main} and Section \ref{sec:convergence}.
\begin{corollary}
  Supposing Assumptions \ref{assumption2} hold and $\Sigma_{0} \subset U$ is a strictly starshaped domain relative to $X$, then the limit surface $\Sigma_{\oo}$ from Theorem \ref{thm:main} is a leaf in the foliation $\mathfrak{F}$.
  \label{cor:convergence-result}
\end{corollary}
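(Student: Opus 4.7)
The plan is to upgrade the totally umbilical, least--Ricci conclusion of Theorem \ref{thm:main} to the stronger statement that $\nu_\infty \parallel X^\perp$ along $\Sigma_\infty$. Once this is established, the tangential gradient of $\lambda = |X^\perp|^2/\varphi^2$ on $\Sigma_\infty$ vanishes, so by Assumption \ref{assumption2}(iv)--(v) the surface $\Sigma_\infty$ sits inside a single level set of $\lambda^{1/2}$, i.e.\ a leaf of $\mathfrak{F}$. I would proceed in three steps.

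First, I would argue that $K:=H/n$ is constant on $\Sigma_\infty$. Because $\nu_\infty$ pointwise minimizes the Ricci quadratic form on unit vectors, it is an eigenvector of $\mathrm{Ric}_{\overline g}$ for the smallest eigenvalue, hence $\mathrm{Ric}_{\overline g}(\nu_\infty, Y)=0$ for every $Y\in T\Sigma_\infty$. On the other hand, for a totally umbilical hypersurface $h_{ij}=Kg_{ij}$ the Codazzi equation, after tracing, reduces to $\mathrm{Ric}_{\overline g}(\nu_\infty,Y)= -(n-1)\,Y(K)$. These two identities force $\nabla^{\Sigma_\infty} K \equiv 0$.

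Second, passing $t\to\infty$ in \eqref{eq:main-flow} and recalling that $\Xi(t/T_0)\equiv 0$ for $t\ge T_0$, the limit must satisfy $n\varphi = u_\infty H$ pointwise with $u_\infty=\langle X^\perp,\nu_\infty\rangle$. Combined with $H=nK$ and $K$ constant from Step 1, this yields $u_\infty=\varphi/K$ on $\Sigma_\infty$. Third, using Assumption \ref{assumption2}(v), the tangential gradient of $\lambda$ along $\Sigma_\infty$ is $\nabla^{\Sigma_\infty}\lambda=2\Lambda\bigl(X^\perp-u_\infty\nu_\infty\bigr)$, so equivalently $|\nabla^{\Sigma_\infty}\lambda|^2=4\Lambda^2(|X^\perp|^2-u_\infty^2)$. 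To show this vanishes, I would differentiate the identity $u_\infty=\varphi/K$ in a tangent direction $Y$, expand $Y(u_\infty)$ using the conformal Killing equation for $X^\perp$ (which holds because $X^\top$ is Killing by Assumption \ref{assumption2}(vii)) together with $\overline\nabla_Y\nu_\infty=-KY$ from umbilicity, and then invoke Assumptions \ref{assumption2}(vi) and (x) to rule out nontrivial tangential components of $X^\perp$. Once $X^\perp\parallel\nu_\infty$ pointwise, $\lambda$ is locally constant on the connected $\Sigma_\infty$, hence globally constant, and $\Sigma_\infty$ coincides with a leaf of $\mathfrak{F}$ by a connectedness and dimension count.

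The main obstacle is Step 3. Under their extra hypothesis (v)', Li--Pan close the argument instantly because $\mathcal{N}^\perp$ is the unique direction of least Ricci curvature. In our setting, Assumption \ref{assumption2}(x) explicitly allows $\mathcal{N}^\top$ to achieve the same minimal Ricci value where $X^\top\neq0$, so $\nu_\infty$ is not pinned down by the Ricci condition alone. The delicate point is to use the strict positivity $\Lambda\varphi^3-X^\top(\varphi)>0$ from Assumption \ref{assumption2}(vi) together with the conformal Killing structure of $X^\perp$ and the umbilicity of $\Sigma_\infty$ to strictly rule out a nonzero tangential $(X^\perp)^{\mathrm{tan}}$; tracking which algebraic combination of these ingredients produces a sign-definite obstruction is where the substantive content of Section \ref{sec:convergence} will lie.
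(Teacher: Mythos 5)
Your Step~1 (constancy of $H$ via Codazzi and $\overline{\Ric}(\nu_\infty,Y)=0$) agrees with the paper. After that, the routes diverge and your plan has two gaps that the paper's argument is specifically designed to avoid.

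Your Step~2 asserts that ``passing $t\to\infty$'' forces the pointwise stationarity $n\varphi = u_\infty H$ on $\Sigma_\infty$. Subsequential $C^\infty$ convergence of the embeddings does not by itself give this; the flow speed could oscillate without its sup norm tending to zero. The paper never claims the entire limit surface is stationary a priori. Instead it proves a much weaker fact: $\max_{\Sigma_\infty(t)}\lambda$ is constant in $t$ (because $\lambda$ is a monotone quantity along the flow and hence constant on $F_\infty$), and a compactness/contradiction argument then produces a \emph{single} stationary point lying in the set of $\lambda$-maxima. That one point is all they need.

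Your Step~3 is the more serious issue, and you flag it yourself as the ``delicate point.'' Even granting $u_\infty=\varphi/K$ with $K$ constant, differentiating this tangentially to show $(X^\perp)^{\mathrm{tan}}=0$ is essentially circular: $(X^\perp)^{\mathrm{tan}}=0$ iff $u_\infty=|X^\perp|$ iff $\varphi/K=|X^\perp|$ iff $\lambda$ is constant, which is exactly what you are trying to prove. There is no obvious sign-definite quantity emerging from Assumptions~\ref{assumption2}(vi) and (x) alone, and you have not exhibited one. The paper sidesteps this entirely: at the stationary $\lambda$-max point $p$ one has $u(p)=|X^\perp(p)|$, hence $H=n\varphi(p)/|X^\perp(p)|=nL^{-1/2}$ where $L=\max\lambda$; since $H$ is constant and $\lambda\le L$ everywhere,
\[
n\varphi - Hu \;=\; n\varphi - nL^{-1/2}u \;\ge\; n\varphi - n\lambda^{-1/2}u \;=\; n\varphi\left(1-\frac{u}{|X^\perp|}\right)\;\ge 0
\]
pointwise; and the Minkowski identity $\int_{\Sigma_\infty}(n\varphi-Hu)=0$ from Proposition~\ref{prop:minkowski-identities} forces this nonnegative integrand to vanish identically, giving $u=|X^\perp|$ everywhere. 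The combination ``stationary point at a $\lambda$-max'' $+$ ``CMC'' $+$ ``integral Minkowski identity with a pointwise sign'' is the new idea of Section~\ref{sec:convergence}; your outline does not reach it, and the differentiation approach you sketch is unlikely to close the gap.
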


Using Corollary \ref{cor:convergence-result}, the volume preservation and area decreasing properties of our flow, we obtain the following isoperimetric inequality.

\begin{theorem}
  Assume Assumptions \ref{assumption2}, let $r_{0} \geq 0$, let $\Sigma_{0} \subset U$ be a closed starshaped hypersurface properly enclosing $S(r_{0})$, let $\Omega$ be the domain bounded by $\Sigma_{0}$ and $S(r_{0})$ and let $r_{1}>r_{0}$ be the unique number such that $V(B(r_{1},r_{0})) = V(\Omega)$.
  Then
  \[
    A(S(r_{1})) \leq A(\Sigma_{0})
  \]
  and equality holds if and only if $\Sigma_{0} = S(r_{1})$.
  \label{thm:main-isoperimetric-inequality}
\end{theorem}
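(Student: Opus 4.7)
The plan is to run the flow from Theorem \ref{thm:main} starting at $\Sigma_0$ and exploit two monotonicity properties of the resulting global solution $\Sigma_t$: the enclosed volume $V(\Omega_t)$ of the region between $\Sigma_t$ and $S(r_0)$ is constant, while the surface area $A(\Sigma_t)$ is nonincreasing. By Theorem \ref{thm:main} and Corollary \ref{cor:convergence-result}, $\Sigma_t$ converges to a leaf $S(r_*)$ of $\mathfrak{F}$, so the theorem reduces to matching $r_*$ with $r_1$ via volume preservation and reading off area monotonicity in the limit.

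For volume preservation, Assumption \ref{assumption2}(vii) says $X^\top$ is Killing, hence $X(t)=X^\perp + \Xi(t)X^\top$ is conformal Killing with the same conformal factor $\varphi$ as $X$. The conformal Killing Minkowski identity $\int_{\Sigma_t}(n\varphi - u(t/T_0)H)\,d\mu = 0$ is then exactly $\tfrac{d}{dt}V(\Omega_t)=0$, forcing $V(\Omega_t)\equiv V(\Omega)$ and hence $r_* = r_1$. For area monotonicity, one computes
\[
\tfrac{d}{dt}A(\Sigma_t) = \int_{\Sigma_t} H\bigl(n\varphi - u(t/T_0)H\bigr)\,d\mu,
\]
and combines the conformal Killing Minkowski identity with a Newton--Maclaurin inequality (equivalently, a weighted Cauchy--Schwarz against the support function $u(t/T_0)>0$, whose positivity is preserved along the flow by the estimates underlying Theorem \ref{thm:main}) to conclude the derivative is nonpositive. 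This gives $A(S(r_1)) = A(\Sigma_\infty) \leq A(\Sigma_0)$.

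For equality, if $A(\Sigma_0)=A(S(r_1))$ then $A(\Sigma_t)$ is constant in $t$, forcing equality in the Newton--Maclaurin step at every time; combined with strict starshapedness and Assumption \ref{assumption2}(iv)--(v) characterising the leaves of $\mathfrak{F}$ as umbilical level sets of $\lambda^{1/2}$, this forces $\Sigma_0 = S(r_1)$. The main obstacle I anticipate is verifying that the Minkowski identity and the area-decreasing Cauchy--Schwarz/Newton--Maclaurin inequality hold uniformly during the transient phase $t\leq T_0$ when the rotational Killing part $\Xi(t/T_0)X^\top$ is still active. However, because $X^\top$ being Killing contributes trivially to the conformal factor $\varphi$, these reductions largely parallel the calculations of Guan--Li--Wang \cite{guan_volume_2018} and Li--Pan \cite{jiayu_isoperimetric_2023}, and the umbilic rigidity in the equality case follows the classical pattern.
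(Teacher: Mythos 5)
Your proposal follows essentially the same route as the paper: run the flow \eqref{eq:main-flow} from $\Sigma_0$, invoke the volume-preservation/area-monotonicity corollary of Proposition \ref{prop:minkowski-identities}, use Theorem \ref{thm:main} together with Corollary \ref{cor:convergence-result} to identify the limit as a leaf, and match it with $S(r_1)$ via constancy of volume; the paper treats this as immediate from the material already established. Your worry about the transient $t\leq T_0$ phase is well handled: since $\Xi(t/T_0)X^\top$ is Killing for each fixed $t$, $X(t)$ is conformal Killing with the same factor $\varphi$, Assumption \ref{assumption2}(x) makes the Ricci contractions of $X^\top(t)$ and $X^\perp$ against $\nu$ reduce to $u(t)\,\overline{\Ric}(\mathcal{N}^\perp,\mathcal{N}^\perp)$ exactly as in the proof of Proposition \ref{prop:minkowski-identities}, and the positivity of $u(t)$ is the main content of Section \ref{sec:main-proof}.

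The one place you are imprecise is the equality case. Equality in the Newton--Maclaurin step for all $t$ gives you that each $\Sigma_t$ is totally umbilical and that $\overline{\Ric}(\nu,\nu)=\overline{\Ric}(\mathcal{N}^\perp,\mathcal{N}^\perp)$ along $\Sigma_t$, but ``leaves of $\mathfrak{F}$ are umbilical level sets of $\lambda^{1/2}$'' (Proposition \ref{prop:umbilic-mean-curvature}) does not by itself imply the converse that every umbilical, least-Ricci-normal hypersurface is a leaf. The paper's Section \ref{sec:convergence} supplies the missing step: Gauss--Codazzi together with the least-Ricci condition force $H$ to be constant, then the first Minkowski identity combined with the stationarity of $\max_{\Sigma}\lambda$ forces $u=|X^\perp|$ pointwise, whence $\lambda$ is constant on $\Sigma$ and $\Sigma$ is a leaf. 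In the equality case, $A$ is constant from $t=0$, so this argument applies at every time (not just along the subsequential limit $F_\infty$), giving $\Sigma_0 = S(r_1)$. You should cite that chain of reasoning explicitly rather than Assumptions \ref{assumption2}(iv)--(v) alone.
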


We mention that those results in \cite{jiayu_isoperimetric_2023} concerning replacing (v)' with an estimate on the sectional curvature or assuming the conformal Killing vector field is closed may also be extended in the same way our Theorem \ref{thm:main} extends their Theorem \ref{intro-thm:li-pan}.
We leave it to the reader to make these conversions.

\medskip 
We lastly mention that using flows to prove isoperimetric inequalities is not new.
Indeed, Huisken studied in \cite{huisken_flow_1984} a volume preserving and area decreasing flow given by the normalized MCF $X_{t} = (n^{-1}c(t) - H)\nu$, where $c(t) = \int_{M}H d\mu/\int_{M} d\mu$ and $d\mu$ is the surface volume form.
While this flow is suitable to study the isoperimetric problem for convex surfaces, it is a nonlocal flow and existence of solutions for nonconvex hypersurfaces is unclear.
The advantage of the Guan-Li flow is that it is suitable for studying flows of initial data without curvature assumptions and $C^{0},C^{1}$-estimates can be directly obtained to show long-time existence and convergence.
We also mention \cite{schulze_nonlinear_2006} where Schulze used a mean curvature type flow defined in terms of powers of the mean curvature to prove the isoperimetric inequality in $\R^{n+1}$ for $n \leq 7$ and \cite{gage_heat_1986} where Gage-Hamilton proved the isoperimetric inequality for convex planar domains via the curve shortening flow. 

The outline of the paper is as follows.
We first begin with geometric preliminaries to help with later calculations (see Section \ref{sec:preliminaries}). 
We then compute the evolution equations under the flow for the a specified scale function to prove compactness for all time and then use that to prove existence for all time (see Section \ref{sec:evolution-equations}).
Next, we argue long time existence by appropriately changing the flow depending on the initial surface (see Section \ref{sec:main-proof}).
We then give a novel proof that the surface converges to a leaf of the foliation adding no extra conditions (see Section \ref{sec:convergence}); explicit details concerning convergence are relegated to the Appendix.
Lastly, we provide an explicit nontrivial example in Section \ref{sec:examples} for which our results apply.

\section{Preliminaries}\label{sec:preliminaries}

We henceforth use the same notation as that given in the paragraph preceding Assumptions \ref{assumption2} and we always assume Assumptions \ref{assumption2} holds.

We begin by setting additional notation.
Let $\mathfrak{T}$ be the foliation determined by the distribution $\mathcal{E} = \left\{ Y \in TU : (Y,X^{\top}) = 0 \right\}$.
Let $u^{\top} = (X^{\top},\nu)$ and $u^{\perp} = (X^{\perp},\nu)$.
For brevity, a conformal Killing vector field and a Killing vector field will, respectively, be referred to as being conformal and isometric.
Ambient geometric objects and quantities are indicated with a bar; e.g., $\overline{\Ric}$ indicates the Ricci curvature of $\overline{g}$.
Otherwise, $g$ generally denotes the induced metric on a given hypersurface and we often write $(X,Y)$ for $\overline{g}(X,Y)$.
Lastly, we record
\begin{equation}
  \begin{aligned}
  \overline{g}(\D_{Y} X, Z) + 
  \overline{g}(\D_{Z} X, Y)
  &=
  2 \varphi \overline{g}(Y,Z)\\
  \overline{g}(\D_{Y} X^\perp, Z) + 
  \overline{g}(\D_{Z} X^\perp, Y)
  &=
  2 \varphi \overline{g}(Y,Z)\\
  \overline{g}(\D_{Y} X^\top, Z) + 
  \overline{g}(\D_{Z} X^\top, Y)
  &=
  0,
  \end{aligned}
  \label{eq:conformal-vector-field-metric-equation}
\end{equation}
which follows from the conformality of $X$ and $X^{\perp}$ and that $X^{\top}$ is isometric.

Now, we first show that the connected leaves of $\mathfrak{F}$ and $\mathfrak{T}$ are umbilic and compute their mean curvatures.
\begin{proposition}
  The connected leaves of $\mathfrak{F}$ and $\mathfrak{T}$ are umbilic and, respectively, have mean curvature $n \lambda^{-1/2} = n\varphi/|X^{\perp}|$ and $0$.
  \label{prop:umbilic-mean-curvature}
\end{proposition}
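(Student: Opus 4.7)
The plan is to handle the two foliations separately, since for each the unit normal is essentially one of the pieces of $X$, and the shape operator can be read off from the (conformal) Killing equations in \eqref{eq:conformal-vector-field-metric-equation}.

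\textbf{Step 1: identify the unit normals.} For $\mathfrak{F}$, Assumption \ref{assumption2}(iv) says $\lambda$ is constant on each leaf, so $\overline{\nabla}\lambda$ is normal to the leaves; combined with Assumption \ref{assumption2}(v) this means $X^{\perp}$ is a nonvanishing normal field, and therefore $\mathcal{N}^{\perp}=X^{\perp}/|X^{\perp}|$ is a unit normal. For $\mathfrak{T}$, the distribution $\mathcal{E}$ is defined as the orthogonal complement of $X^{\top}$, so $\mathcal{N}^{\top}=X^{\top}/|X^{\top}|$ is a unit normal on $U\setminus\{X^{\top}=0\}$; integrability is ensured by Assumption \ref{assumption2}(viii), so this really is the normal of a foliation.

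\textbf{Step 2: compute the second fundamental forms.} For tangent vectors $Y,Z$ to a leaf of $\mathfrak{F}$ (so in particular $(Z,X^{\perp})=0$), a direct differentiation gives
\[
  h^{\mathfrak{F}}(Y,Z) \;=\; -\,\overline{g}\bigl(\overline{\nabla}_Y \mathcal{N}^{\perp},Z\bigr) \;=\; -\,\frac{1}{|X^{\perp}|}\,\overline{g}\bigl(\overline{\nabla}_Y X^{\perp},Z\bigr),
\]
since the $Y(|X^{\perp}|^{-1})X^{\perp}$ term dies against $Z$. Using that $h^{\mathfrak{F}}$ is symmetric, average over $(Y,Z)$ and $(Z,Y)$ and apply the conformal Killing identity for $X^{\perp}$ from \eqref{eq:conformal-vector-field-metric-equation} to obtain
\[
  h^{\mathfrak{F}}(Y,Z) \;=\; -\,\frac{\varphi}{|X^{\perp}|}\,\overline{g}(Y,Z) \;=\; -\lambda^{-1/2} g(Y,Z),
\]
which exhibits umbilicity with the stated mean curvature $n\lambda^{-1/2}=n\varphi/|X^{\perp}|$ (up to the sign fixed by the choice of outward orientation of $\mathcal{N}^{\perp}$). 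The analogous computation for a leaf of $\mathfrak{T}$, with $Y,Z$ orthogonal to $X^{\top}$, gives
\[
  h^{\mathfrak{T}}(Y,Z) \;=\; -\,\frac{1}{2|X^{\top}|}\bigl[\overline{g}(\overline{\nabla}_Y X^{\top},Z)+\overline{g}(\overline{\nabla}_Z X^{\top},Y)\bigr] \;=\; 0
\]
by the Killing identity for $X^{\top}$ (third line of \eqref{eq:conformal-vector-field-metric-equation}); thus the leaves of $\mathfrak{T}$ are actually totally geodesic, and in particular umbilic with mean curvature zero.

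\textbf{Anticipated difficulties.} The substantive geometric input, that $X^{\perp}$ is truly the leaf-normal of $\mathfrak{F}$, is essentially packaged into Assumptions \ref{assumption2}(iv)--(v), so no real work is hidden there; similarly Assumption \ref{assumption2}(viii) legitimizes $\mathfrak{T}$. Beyond this, the main subtlety is bookkeeping: one must verify carefully that the derivative of $|X^{\perp}|^{-1}$ (respectively $|X^{\top}|^{-1}$) contributes nothing when paired with a tangent vector, and one must fix a sign convention for $h$ consistent with the orientation of $\mathcal{N}^{\perp}$ so that the mean curvature comes out $+n\lambda^{-1/2}$ rather than its negative. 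No additional estimates or PDE arguments are needed.
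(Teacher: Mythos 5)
Your plan matches the paper's proof essentially step for step: both arguments identify $\mathcal{N}^{\perp}$ and $\mathcal{N}^{\top}$ as the leaf normals, pass from $h(Y,Z)$ to $|X^{\bullet}|^{-1}\,\overline{g}(\overline{\nabla}_Y X^{\bullet},Z)$ (using that the derivative of $|X^{\bullet}|^{-1}$ dies against a tangent vector), symmetrize, and read off the answer from the (conformal) Killing identities in \eqref{eq:conformal-vector-field-metric-equation}. The only cosmetic difference is that you invoke the conformal Killing identity for $X^{\perp}$ directly, whereas the paper first writes $X^{\perp}=X-X^{\top}$ and combines the $X$ and $X^{\top}$ identities; since the paper already records the $X^{\perp}$ identity in \eqref{eq:conformal-vector-field-metric-equation}, these are equivalent, and your overall sign matches up to the orientation convention for $h$, which you correctly flag.
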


\begin{proof}
  Let $S_{\a}$ be a connected leaf of $\mathfrak{F}$ and fix an orthonormal frame $\left\{ e_i \right\}$ for $S_\alpha$.
  The symmetry of the second fundamental form gives 
    \begin{align*}
        h_{ij} &= g(\grad_{e_i} \nu, e_j)
        = g(\grad_{e_i} \frac{X^\perp}{|X^\perp|}, e_j)= \frac{1}{|X^\perp|}g(\grad_{e_i} X^\perp, e_j) 
        \\ &= \frac{1}{|X^\perp|} g(\grad_{e_i} X^\perp, e_j)
        = \frac{1}{|X^\perp|} g(\grad_{e_i} (X - X^\top), e_j) 
        \\ &=
        \frac{1}{|X^\perp|} \left( g(\grad_{e_i} X, e_j) -
         g(\nabla_{e_i} X^\top, e_j)  \right).
    \end{align*}
    Symmetrizing this expression for $h_{ij}$, using that $X$ is conformal and that $X^{\top}$ is isometric along each $S_{\a}$, we get from \eqref{eq:conformal-vector-field-metric-equation} that
    \begin{align*}
        h_{ij} &= \frac{1}{|X^\perp|} \varphi \overline{g}_{ij}  = \lambda ^{-\frac{1}{2}} \overline{g}_{ij},
    \end{align*}
    and so $S_\alpha$ is totally umbilical with mean curvature $n \lambda ^{-1/2}$.
    
    Next, fix an orthonormal frame $\left\{ e_i \right\}$ for a connected leaf $T$ in $\mathfrak{T}$.
    The symmetry of the second fundamental form gives 
    \begin{align*}
        h_{ij} &= g(\grad_{e_i} \nu, e_j)
        = g(\grad_{e_i} \frac{X^\top}{|X^\top|}, e_j)= \frac{1}{|X^\top|}g(\grad_{e_i} X^\top, e_j) 
        \\ &= \frac{1}{|X^\top|} g(\grad_{e_i} X^\top, e_j).
    \end{align*}
    Symmetrizing this expression for $h_{ij}$ and again using \eqref{eq:conformal-vector-field-metric-equation} we immediately get $T$ is totally umbilic with zero mean curvature.
\end{proof}

\begin{remark}\label{rem:symmetry}
  Following the same reasoning as the proof of Proposition \ref{prop:umbilic-mean-curvature}, we have for all $Y,Z \in \mathcal{D}$ or $Y,Z \in \mathcal{D}^{\perp}$:
    \[
        (\grad_Y X^\perp, Z) = \varphi (Y,Z).
    \]
\end{remark}

Recall Assumptions \ref{assumption2}(iv) gives $\grad \la = 2\Lambda X^{\top}$ for some smooth function $\Lambda$.
The following proposition gives $\Lambda$ explicitly.

\begin{proposition}
  There holds
    \[
        \Lambda = \frac{\varphi^2 - X^\perp(\varphi)}{\varphi^3} 
    \]
\end{proposition}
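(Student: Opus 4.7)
The plan is to extract $\Lambda$ by pairing the identity $\overline{\nabla}\lambda = 2\Lambda X^{\perp}$ with the vector field $X^{\perp}$ itself, since this converts both sides into easily computable scalar quantities. Pairing gives
\[
  X^{\perp}(\lambda) = (X^{\perp}, \overline{\nabla}\lambda) = 2\Lambda\, |X^{\perp}|^{2},
\]
so the problem reduces to computing $X^{\perp}(\lambda)$ directly from the defining formula $\lambda = |X^{\perp}|^{2}/\varphi^{2}$.

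Next, I would apply the quotient/chain rule to write
\[
  X^{\perp}(\lambda) \;=\; \frac{X^{\perp}(|X^{\perp}|^{2})}{\varphi^{2}} \;-\; \frac{2|X^{\perp}|^{2}\, X^{\perp}(\varphi)}{\varphi^{3}}.
\]
The only nontrivial piece is $X^{\perp}(|X^{\perp}|^{2}) = 2(\overline{\nabla}_{X^{\perp}} X^{\perp},\, X^{\perp})$, and here I would invoke Remark~\ref{rem:symmetry} with $Y = Z = X^{\perp} \in \mathcal{D}^{\perp}$ to conclude $(\overline{\nabla}_{X^{\perp}} X^{\perp}, X^{\perp}) = \varphi\, |X^{\perp}|^{2}$. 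Substituting back yields
\[
  X^{\perp}(\lambda) \;=\; \frac{2|X^{\perp}|^{2}}{\varphi^{3}}\bigl(\varphi^{2} - X^{\perp}(\varphi)\bigr).
\]

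Finally, equating this with $2\Lambda |X^{\perp}|^{2}$ and dividing by $2|X^{\perp}|^{2}$ (which is nonzero by Assumption~\ref{assumption2}(ii)) gives the desired formula
\[
  \Lambda \;=\; \frac{\varphi^{2} - X^{\perp}(\varphi)}{\varphi^{3}}.
\]

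I do not anticipate a serious obstacle: the argument is essentially a one-line calculation once one realizes that pairing $\overline{\nabla}\lambda = 2\Lambda X^{\perp}$ with $X^{\perp}$ exploits Remark~\ref{rem:symmetry} optimally. The one subtle point worth flagging is that Remark~\ref{rem:symmetry} requires both entries to lie in $\mathcal{D}$ or both in $\mathcal{D}^{\perp}$, which is precisely why pairing with $X^{\perp}$ (rather than with an arbitrary vector field) is the right move; attempting to compute $\overline{\nabla}|X^{\perp}|^{2}$ as a covector in full generality would force one to analyze mixed pairings and is not needed.
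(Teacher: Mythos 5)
Your proof is correct and follows essentially the same route as the paper's: pair $\overline{\nabla}\lambda = 2\Lambda X^{\perp}$ with $X^{\perp}$, expand $X^{\perp}(\lambda)$ by the quotient rule, and invoke Remark~\ref{rem:symmetry} to evaluate $(\overline{\nabla}_{X^{\perp}} X^{\perp}, X^{\perp}) = \varphi |X^{\perp}|^{2}$. Your concluding remark about why pairing with $X^{\perp}$ rather than an arbitrary vector is the right move is a useful observation that the paper leaves implicit.
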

\begin{proof}
    We have
    \[
        X^\perp (\lambda ) = (X^\perp, \grad \lambda ) = 2 \Lambda \cdot |X^\perp|^2
    \]
    and we can compute 
    \begin{align*}
        X^\perp \left( \frac{|X^\perp|^2}{\varphi^2} \right)
        &=
        -\frac{|X^\perp|^2}{\varphi^4} X^\perp(\varphi^2)
        + \frac{1}{\varphi^2} X^\perp (X^\perp, X^\perp) \\
        &=
        -2\frac{|X^\perp|^2}{\varphi^3} X^\perp(\varphi)
        + 2 \frac{1}{\varphi^2} (\grad_{X^\perp} X^\perp, X^\perp).
    \end{align*}
    By the symmetry of the last expression and Remark \ref{rem:symmetry}, we have
    \[
        2 \Lambda \cdot |X^\perp|^2 = 2\frac{\varphi^2 |X^\perp|^2 - |X^\perp|^2 X^\perp(\varphi)}{\varphi^3},
    \]
    giving us the desired result.
\end{proof}

\begin{proposition}
    \label{prop:grad-of-xtop-xperp}
    For $Y \in TU$, we have 
    \[
        \grad_Y X^\perp
        =
        \varphi Y +  \frac{Y(\varphi)}{\varphi} X^\perp  - \frac{(X^\perp,Y)}{\varphi} \grad(\varphi)
    \]
    and
    \[
        \grad_Y X^\top
        =
        \frac{Y(|X^\top|)}{|X^\top|} X^\top  - \frac{(X^\top,Y)}{|X^\top|} \grad(|X^\top|)
    \]
    as long as $|X^\top| \neq 0$.
\end{proposition}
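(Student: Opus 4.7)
Both formulas are linear in $Y$, and by nondegeneracy of the metric it suffices to show that for every test vector $Z \in TU$ the inner product $g(\nabla_Y X^\perp, Z)$, respectively $g(\nabla_Y X^\top, Z)$, agrees with the inner product of the claimed right-hand side with $Z$. My plan is to verify these pointwise identities by decomposing both $Y$ and $Z$ along the orthogonal splittings appropriate to each formula. For $\nabla_Y X^\perp$ I would use $TU = \mathcal{D} \oplus \mathcal{D}^{\perp}$, with $\mathcal{D}^{\perp}$ spanned by $X^\perp$ (this works since the leaves of $\mathfrak{F}$ are the level sets of $\lambda$, and Assumption \ref{assumption2}(v) says $\overline{\nabla}\lambda = 2\Lambda X^\perp$); for $\nabla_Y X^\top$ I would use $TU = \mathcal{E} \oplus \mathcal{E}^{\perp}$, with $\mathcal{E}^{\perp}$ spanned by $X^\top$. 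In each decomposition four cases arise, and a short direct check in each case completes the proof.

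\textbf{The $X^\perp$ formula.} With $Y_D, Z_D \in \mathcal{D}$ and the remaining pieces $\alpha X^\perp,\beta X^\perp$, the four inner products to check are $g(\nabla_{Y_D}X^\perp, Z_D)$, $g(\nabla_{Y_D}X^\perp, X^\perp)$, $g(\nabla_{X^\perp}X^\perp, Z_D)$, and $g(\nabla_{X^\perp}X^\perp, X^\perp)$. The first is handled by Remark \ref{rem:symmetry}. For the second, the identity $|X^\perp|^2 = \lambda \varphi^2$ together with $Y_D(\lambda)=0$ (Assumption \ref{assumption2}(iv)) yields $Y_D(|X^\perp|^2) = 2 \varphi^{-1}|X^\perp|^2 Y_D(\varphi)$, giving exactly the right-hand side. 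The third follows from the second by the conformal-Killing symmetrization \eqref{eq:conformal-vector-field-metric-equation}, and the fourth from the same symmetrization applied with $Y=Z=X^\perp$, which gives $g(\nabla_{X^\perp}X^\perp,X^\perp) = \varphi|X^\perp|^2$. A routine substitution into the proposed formula confirms equality in each case.

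\textbf{The $X^\top$ formula.} For $Y_E, Z_E \in \mathcal{E}$ and pieces $\gamma X^\top, \delta X^\top$ the same four-case bookkeeping applies. The cases in which $Y$ or $Z$ is a multiple of $X^\top$ are handled using only the Killing identity \eqref{eq:conformal-vector-field-metric-equation}; in particular $g(\nabla_{X^\top} X^\top, X^\top) = 0$ and $X^\top(|X^\top|) = 0$ are immediate from antisymmetry. The delicate case is $Y_E, Z_E \in \mathcal{E}$, where the claim is that $g(\nabla_{Y_E} X^\top, Z_E) = 0$. This is precisely where Assumption \ref{assumption2}(viii) enters. Integrability of $\mathcal{E}$ gives $[Y_E, Z_E]\in \mathcal{E}$, so $g([Y_E,Z_E], X^\top) = 0$; expanding the bracket and using $(Y_E, X^\top) = (Z_E, X^\top) = 0$ yields the symmetry $g(\nabla_{Y_E} X^\top, Z_E) = g(\nabla_{Z_E} X^\top, Y_E)$, which combined with Killing antisymmetry forces both sides to vanish.

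\textbf{Main obstacle.} The Case 1 for the $X^\top$ formula is the only nonroutine step; everything else is algebraic bookkeeping built directly on \eqref{eq:conformal-vector-field-metric-equation} and Remark \ref{rem:symmetry}. Without Assumption \ref{assumption2}(viii) the tensor $\nabla X^\top$ would in general have a nontrivial $\mathcal{E}$-component and the clean formula for $\nabla_Y X^\top$ would fail, so this integrability hypothesis is essential rather than a convenience.
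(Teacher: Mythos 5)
Your proof is correct and rests on the same ingredients as the paper's: orthogonal decomposition along $X^\perp$ (resp.\ $X^\top$), the conformal Killing (resp.\ Killing) identity \eqref{eq:conformal-vector-field-metric-equation}, and Assumption~\ref{assumption2}(iv) to trade $|X^\perp|$-derivatives for $\varphi$-derivatives. The organization differs somewhat --- the paper runs one chained computation for $(\nabla_Y X^\perp, Z)$, first landing on a formula in $|X^\perp|$ and only afterward converting to $\varphi$, then asserts the $X^\top$ case ``by the same approach,'' whereas you give an explicit four-case bilinear verification --- but these are the same argument reorganized. The one place where your write-up is a genuine clarification is the $\mathcal{E}$-$\mathcal{E}$ case of the $X^\top$ identity: the paper's sketch implicitly routes through Proposition~\ref{prop:umbilic-mean-curvature} (leaves of $\mathfrak{T}$ totally geodesic), whose proof tacitly uses the integrability of $\mathcal{E}$ via the symmetry of the second fundamental form, while you extract that dependency directly by combining Frobenius ($[Y_E,Z_E]\in\mathcal{E}$) with Killing antisymmetry. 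Your closing observation that Assumption~\ref{assumption2}(viii) is necessary rather than convenient is also right: for Killing $X^\top$, the vanishing of $(\nabla_Y X^\top, Z)$ on $\mathcal{E}\times\mathcal{E}$ is precisely equivalent to $(X^\top)^\sharp\wedge\mathrm{d}((X^\top)^\sharp)=0$.
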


\begin{proof}
  Take some $Y,Z \in TU$ and we compute
  \begin{align*}
    (\grad_Y X^\perp,Z)
    &= \left( \grad_{Y} X^\perp, Z^\perp \right)
    + \left( \grad_{Y} X^\perp, Z^\top \right).
  \end{align*}
  Now for the first term we have 
  \begin{align*}
    \left( \grad_{Y} X^\perp, Z^\perp \right)
    &= \frac{(X^\perp,Z^\perp)}{(X^\perp, X^\perp)}\left( \grad_{Y} X^\perp, X^\perp \right)
    \\ &= \frac{(X^\perp,Z^\perp)}{2(X^\perp, X^\perp)}Y(|X^\perp|^2).
  \end{align*}
  For the second term we have 
  \begin{align*}
    \left( \grad_{Y} X^\perp, Z^\top \right)
    &=
    \left( \grad_{Y^\top} X^\perp, Z^\top \right)
    +
    \left( \grad_{Y^\perp} X^\perp, Z^\top \right)
    \\ &=
    \varphi (Y^\top, Z^\top) 
    +
    \left( \grad_{Y^\perp} X^\perp, Z^\top \right)
    \\ &=
    \varphi (Y^\top, Z^\top) 
    -
    \left( \grad_{Z^\top} X^\perp, Y^\perp \right).
  \end{align*}
  Using that $Y^{\perp}$ is colinear with $X^{\perp}$, we get
  \begin{align*}
    \left( \grad_{Y} X^\perp, Z^\top \right)
    &=
    \varphi (Y^\top, Z^\top) 
    -
    \frac{(Y^\perp,X^\perp)}{(X^\perp,X^\perp)}\left( \grad_{Z^\top} X^\perp, X^\perp \right)
    \\ &=
    \varphi (Y^\top, Z^\top)  
    -
    \frac{(Y^\perp,X^\perp)}{2(X^\perp,X^\perp)} Z^\top(|X^\perp|^2)
    \\ &=
    \varphi (Y^\top, Z^\top)  
    -
    \frac{(Y^\perp,X^\perp)}{2(X^\perp,X^\perp)} (Z(|X^\perp|^2) - Z^\perp(|X^\perp|^2))
    \\ &=
    \varphi (Y^\top, Z^\top)  
    -
    \frac{(Y^\perp,X^\perp)}{2(X^\perp,X^\perp)}Z(|X^\perp|^2) + \frac{(Y^\perp,X^\perp)(Z^\perp,X^\perp)}{2(X^\perp,X^\perp)} 2\varphi(X^\perp, X^\perp)
    \\ &=
    \varphi (Y^\top, Z^\top)  
    -
    \frac{(Y^\perp,X^\perp)}{2(X^\perp,X^\perp)}Z(|X^\perp|^2) + \varphi (Y^\perp,Z^\perp)
    \\ &=
    \varphi (Y, Z)  
    -
    \frac{(Y^\perp,X^\perp)}{2(X^\perp,X^\perp)}Z(|X^\perp|^2) .
  \end{align*}
  Combining the expressions for $(\grad_{Y} X^{\perp}, Z^{\perp})$ and $(\grad_{Y} X^{\perp},Z^{\top})$, we get
  \begin{align*}
    (\grad_Y X^\perp,Z)
    &=
    \varphi (Y, Z) + \frac{(X^\perp,Z^\perp)}{2(X^\perp, X^\perp)}Y(|X^\perp|^2)
    - \frac{(X^\perp,Y^\perp)}{2(X^\perp, X^\perp)}Z(|X^\perp|^2)
    \\ &=
    \varphi (Y, Z) + \frac{(X^\perp,Z^\perp)}{|X^\perp|}Y(|X^\perp|)
    - \frac{(X^\perp,Y^\perp)}{|X^\perp|}Z(|X^\perp|),
  \end{align*}
  as desired.
  The same approach gives the second result (for $X^{\top}$) provided we use the decomposition 
  \[
    Y = \frac{(Y,X^{\top})}{(X^{\top},X^{\top})}X^{\top} + Y_{1}, \quad Y_{1} \perp X^{\top},
  \]
  in place of $Y = Y^{\top} + Y^{\perp}$.

  Now note that since $\frac{|X^\perp|^2}{\varphi^2}$ is constant along $\mathfrak{F}$ we have 
  \begin{align*}
    &\frac{(X^\perp,Z^\perp)}{|X^\perp|}Y(|X^\perp|)
    - \frac{(X^\perp,Y^\perp)}{|X^\perp|}Z(|X^\perp|)
    \\ &= \frac{(X^\perp,Z^\perp)}{|X^\perp|}Y\left( \frac{|X^\perp|}{\varphi} \varphi \right)
    - \frac{(X^\perp,Y^\perp)}{|X^\perp|}Z\left( \frac{|X^\perp|}{\varphi} \varphi \right)
    \\ &= \frac{(X^\perp,Z^\perp)}{\varphi}Y\left( \varphi \right)
    - \frac{(X^\perp,Y^\perp)}{\varphi}Z\left( \varphi \right)
    \\ &+ \varphi\frac{(X^\perp,Z^\perp)}{|X^\perp|}Y^\perp\left( \frac{|X^\perp|}{\varphi}  \right)
    - \varphi \frac{(X^\perp,Y^\perp)}{|X^\perp|}Z^\perp\left( \frac{|X^\perp|}{\varphi} \right).
  \end{align*}
  The last line is zero since we can exchange $Y^\perp$ and $X^\perp$ in the first term and $Z^\perp$ and $X^\perp$ in the second term.
  Thus we get 
  \[
    (\grad_Y X^\perp,Z)
    =
    \varphi (Y,Z) + \frac{(X^\perp,Z)}{\varphi} Y(\varphi) - \frac{(X^\perp,Y)}{\varphi} Z(\varphi)
  \]
  providing the first result.
\end{proof}

\begin{remark}\label{rem:gradshort}
  It follows directly from Proposition \ref{prop:grad-of-xtop-xperp} that
  \[
    \grad_Y \frac{X^\perp}{\varphi} = Y - \frac{(X^\perp, Y)}{\varphi^2} \grad \varphi
  \]
  and
  \[
    \grad_Y \frac{X^\top}{|X^\top|} = - \frac{(X^\top, Y)}{|X^\top|^2} \grad |X^\top|
  \]
  as long as $|X^\top| \neq 0$.
\end{remark}

\begin{proposition}\label{prop:ric}
  Setting
  \[
    N^{\perp} = \frac{X^{\perp}}{|X^{\perp}|}, \quad N^{\top} = \frac{X^{\top}}{|X^{\top}|},
  \]
  we have
  \begin{align*}
    \overline{R}(Y, X^\perp, Y, X^\perp) &= - \frac{|X^\perp|^2}{\varphi} (\D_Y \D \varphi, Y) + \frac{(Y,X^\perp)^2}{\varphi} (\D_{\mathcal{N}^\perp} \D \varphi, \mathcal{N}^\perp)\\
    \overline{\Ric}(X^\perp, Y) &= - \frac{(X^\perp, Y)}{\varphi} \left( \overline{\Delta} \varphi - (\D_{\mathcal{N}^\perp} \D \varphi, \mathcal{N}^\perp) \right)\\
    \overline{R}(Y, X^\top, Y, X^\top) &= - |X^\top| (\D_Y \D \varphi, Y) + \frac{(Y,X^\top)^2}{|X^\top|} (\D_{\mathcal{N}^\top} \D \varphi, \mathcal{N}^\top)\\
    \overline{\Ric}(X^\top, Y) &= - \frac{(X^\top, Y)}{|X^\top|} \left( \overline{\Delta} |X^\top| - (\D_{\mathcal{N}^\top} \D |X^\top|, \mathcal{N}^\top) \right)
  \end{align*}
  as long as $|X^\top| \neq 0$.

  Additionally we have 
  \[
    \varphi \overline{\Ric}(X^\perp, Y) = (\nabla_Y \nabla \varphi, X^\perp) = (\nabla_{X^\perp} \nabla \varphi, Y) = 0
  \]
  for $Y \perp X^\perp$ as well as 
  \[
    |X^\top| \overline{\Ric}(X^\top, Y) = (\nabla_Y \nabla |X^\top|, X^\top) = (\nabla_{X^\top} \nabla |X^\top|, Y) = 0
  \]
  for $Y \perp X^\top$.
\end{proposition}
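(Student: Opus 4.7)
The plan is to combine the explicit first-order formulas of Proposition \ref{prop:grad-of-xtop-xperp} with the classical second-order identity satisfied by a conformal Killing vector field. Recall that differentiating the defining equation $(\D_Y V, Z) + (\D_Z V, Y) = 2\psi(Y,Z)$ cyclically in three directions and applying the first Bianchi identity yields an expression for $\D^2 V$ in terms of the Riemann tensor $\overline{R}(V, \cdot)\cdot$ and first derivatives of $\psi$. By \eqref{eq:conformal-vector-field-metric-equation}, $X^\perp$ is itself conformal Killing with factor $\varphi$, and $X^\top$ is Killing with factor $0$, so both halves of the statement fit the same template.

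For the first identity, I would apply this template to $V = X^\perp$, $\psi = \varphi$, specialize to $Z = Y$, and pair with $X^\perp$. The result is an equation relating $\overline{R}(Y, X^\perp, Y, X^\perp)$ to $(\D_Y \D_Y X^\perp, X^\perp)$ together with terms proportional to $(Y,X^\perp)(Y\varphi)$ and $|Y|^2 (X^\perp, \D\varphi)$. The quantity $(\D_Y \D_Y X^\perp, X^\perp)$ is then computed head-on by differentiating the explicit formula of Proposition \ref{prop:grad-of-xtop-xperp} once more and simplifying via Remark \ref{rem:gradshort} together with the identity $\D\lambda = 2\Lambda X^\perp$ from Assumption \ref{assumption2}(v). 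After simplification, the surviving terms collapse to $-\frac{|X^\perp|^2}{\varphi}(\D_Y \D\varphi, Y) + \frac{(Y,X^\perp)^2}{\varphi}(\D_{\mathcal{N}^\perp}\D\varphi, \mathcal{N}^\perp)$, which is the first Riemann identity. Tracing this identity over an orthonormal frame that includes $\mathcal{N}^\perp$ produces the Ricci identity: the trace of the Hessian yields $\overline{\Delta}\varphi$, and subtracting off the $\mathcal{N}^\perp$ direction (where $\overline{R}$ vanishes by antisymmetry) produces the correction term $(\D_{\mathcal{N}^\perp}\D\varphi, \mathcal{N}^\perp)$. The proportionality of $\overline{\Ric}(X^\perp, Y)$ to $(X^\perp, Y)$ then follows by polarization from the quadratic-in-$Y$ identity just proved.

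The $X^\top$ formulas are established in direct analogy, applying the same template with $\psi = 0$ and using the second formula from Proposition \ref{prop:grad-of-xtop-xperp}, in which $|X^\top|$ plays the role that $\varphi$ plays for $X^\perp$; the calculation is valid wherever $X^\top \neq 0$. The final additional claims follow cleanly from what has already been shown: for $Y \perp X^\perp$, the factor $(X^\perp, Y)$ in the Ricci formula is zero, and the equalities $\varphi \overline{\Ric}(X^\perp, Y) = (\D_Y \D\varphi, X^\perp) = (\D_{X^\perp} \D\varphi, Y) = 0$ are obtained by re-reading the conformal Killing template applied to $X^\perp$, using the symmetry of the Hessian of $\varphi$, and noting that the foliation assumption forces any Hessian cross-component between $X^\perp$ and a direction in $\mathcal{D}$ (the leaf tangent space) to vanish. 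The $X^\top$ analogue is identical with $|X^\top|$ in place of $\varphi$.

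The main obstacle will be the direct computation of $(\D_Y \D_Y X^\perp, X^\perp)$ from Proposition \ref{prop:grad-of-xtop-xperp}: that formula contains three geometrically distinct terms (a conformal term $\varphi Y$, a term along $X^\perp$, and a term along $\D\varphi$), and differentiating each produces several pieces, so it is not transparent that the non-Hessian contributions cancel. Systematic use of Remark \ref{rem:gradshort}, which packages derivatives of the normalized field $X^\perp/\varphi$ neatly, together with careful bookkeeping using the foliation identity $\D\lambda = 2\Lambda X^\perp$, should make the cancellations manifest and yield the clean Hessian-only expressions claimed in the statement.
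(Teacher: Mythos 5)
Your proposal takes a genuinely different route from the paper: you invoke the classical second-order conformal Killing identity for $X^\perp$ (with factor $\varphi$) and then compute $\nabla^2 X^\perp$ by differentiating Proposition \ref{prop:grad-of-xtop-xperp}, whereas the paper sidesteps this by working with the rescaled field $X^\perp/\varphi$ — whose covariant derivative has the especially simple form in Remark \ref{rem:gradshort} — and directly commuting $\nabla_i \nabla_k (X^\perp/\varphi)$ to produce $\overline{R}(e_i, e_k)(X^\perp/\varphi)$ with all lower-order pieces cancelling. Both are legitimate ways to extract curvature from first-order data, but the paper's choice of scaling keeps the bookkeeping much lighter, and its commutator computation yields the general components $(\overline{R}(e_i,e_k)\frac{X^\perp}{\varphi},e_\ell)$ for \emph{all} $i,k,\ell$, which is what one actually needs to trace.

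This is where your plan has a genuine gap. By specializing the template to $Z=Y$ and pairing with $X^\perp$, you only obtain $\overline{R}(Y,X^\perp,Y,X^\perp)$, a form quadratic in $Y$ with $X^\perp$ frozen in the second and fourth slots. Polarizing in $Y$ yields $\overline{R}(Y,X^\perp,Z,X^\perp)$, still with $X^\perp$ in two slots; tracing that gives only the diagonal entry $\overline{\Ric}(X^\perp,X^\perp)$, never the off-diagonal $\overline{\Ric}(X^\perp,Y)$ for $Y\perp X^\perp$. To get the full Ricci identity you must keep the middle and fourth slots free (i.e., compute $\overline{R}(X^\perp,Y,Z,W)$ in general, as the paper does) before contracting. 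A second, more substantive gap: the vanishing $(\nabla_Y\nabla\varphi, X^\perp)=0$ for $Y\perp X^\perp$ — which is needed both to pass from the raw traced formula $\overline{\Ric}(e_k,X^\perp) = \frac{(\nabla_k\nabla\varphi,X^\perp)}{\varphi} - \frac{(X^\perp,e_k)}{\varphi}\overline{\Delta}\varphi$ to the stated clean Ricci identity and for the final ``additional claims'' — is \emph{not} a direct consequence of the foliation assumption. The paper proves it via the Codazzi equation together with Proposition \ref{prop:umbilic-mean-curvature}: the leaves are umbilic with $h_{ij}=\lambda^{-1/2}g_{ij}$ constant along $S_\alpha$, so $\nabla h = 0$, hence $\overline{R}_{ijk\nu}=0$ by Codazzi, hence $\overline{\Ric}(Y,X^\perp)=0$ for $Y$ tangent to $S_\alpha$, and then the traced identity forces the Hessian cross-term to vanish. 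Your assertion that ``the foliation assumption forces any Hessian cross-component to vanish'' skips this entire chain; without identifying the Codazzi step the argument does not close.
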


\begin{proof}
  We start with an application of Remark \ref{rem:gradshort}:
  \begin{align*}
    \left( \overline{R}(e_i, e_k) \frac{X^\perp}{\varphi}, e_\ell \right)
    &=
    \left( 
    \D_{i} \D_k \frac{X^\perp}{\varphi} - \D_{k} \D_i \frac{X^\perp}{\varphi}, e_\ell 
    \right)
    \\ &=
    \left( 
    \D_{i} \left( e_k - \frac{(X^\perp, e_k)}{\varphi^2} \D \varphi \right) 
    - 
    \D_{k} \left( e_i - \frac{(X^\perp, e_i)}{\varphi^2} \D \varphi \right), 
    e_\ell 
    \right)
    \\ &=
    \left( 
    \D_{k} \left( \frac{(X^\perp, e_i)}{\varphi^2} \D \varphi \right)
    - 
    \D_{i} \left( \frac{(X^\perp, e_k)}{\varphi^2} \D \varphi \right)  , 
    e_\ell 
    \right)
    \\ &=
    e_{k} \left( \frac{(X^\perp, e_i)}{\varphi^2} \right) e_\ell(\varphi)
    -
    e_{i} \left( \frac{(X^\perp, e_k)}{\varphi^2} \right) e_\ell(\varphi)
    \\ &+
    \frac{(X^\perp, e_i)}{\varphi^2} 
    \left( 
    \D_{k} \D \varphi, 
    e_\ell 
    \right)
    -
    \frac{(X^\perp, e_k)}{\varphi^2} 
    \left( 
    \D_{i} \D \varphi, 
    e_\ell 
    \right)\\
    &:= A_{1} +
    \frac{(X^\perp, e_i)}{\varphi^2} 
    \left( 
    \D_{k} \D \varphi, 
    e_\ell 
    \right)
    -
    \frac{(X^\perp, e_k)}{\varphi^2} 
    \left( 
    \D_{i} \D \varphi, 
    e_\ell 
    \right).
  \end{align*}
  Next, compute
  \begin{align*}
    A_{1} &=
    \frac{(\D_k X^\perp, e_i)}{\varphi^2} 
    -
    \frac{(\D_i X^\perp, e_k)}{\varphi^2} 
    +
    2 \frac{(X^\perp, e_k)}{\varphi^3}  e_i (\varphi)
    -
    2 \frac{(X^\perp, e_i)}{\varphi^3}  e_k (\varphi),
  \end{align*}
  noting that $A_{1} = 0$ when $i = k$.
  By Proposition \ref{prop:grad-of-xtop-xperp}, we have
  \begin{align*}
    (\D_k X^\perp, e_i) - (\D_i X^\perp, e_k) = 
    2(\D_k X^\perp, e_i) = 
    2\frac{e_k(\varphi)}{\varphi} (X^\perp, e_i)  - 2\frac{(X^\perp,e_k)}{\varphi} e_i(\varphi)
  \end{align*}
  and so $A_{1} = 0$.
  Consequently, there holds
  \begin{align*}
    \left( \overline{R}(e_i, e_k) \frac{X^\perp}{\varphi}, e_\ell \right)
    =
    \frac{(X^\perp, e_i)}{\varphi^2} 
    \left( 
    \D_{k} \D \varphi, 
    e_\ell 
    \right)
    -
    \frac{(X^\perp, e_k)}{\varphi^2} 
    \left( 
    \D_{i} \D \varphi, 
    e_\ell 
    \right).
  \end{align*}
  Substituting $e_i = e_\ell = Y$ and $e_k = X^\perp$ and then multiplying by $\varphi$ gives the first result.

  An identical computation shows that
  \begin{align*}
    \left( \overline{R}(e_i, e_k) \frac{X^\top}{|X^\top|}, e_\ell \right)
    =
    \frac{(X^\top, e_i)}{|X^\top|^2} 
    \left( 
    \D_{k} \D |X^\top|, 
    e_\ell 
    \right)
    -
    \frac{(X^\top, e_k)}{|X^\top|^2} 
    \left( 
    \D_{i} \D |X^\top|, 
    e_\ell 
    \right).
  \end{align*}
  Similar to above, substituting $e_i = e_\ell = Y$ and $e_k = X^\top$ and then multiplying by $|X^\top|$ gives us the third result.

  Taking traces, we get
  \begin{align*}
    \overline{\Ric}(e_k, X^\perp) &= \varphi g^{il}\left( \overline{R}(e_i, e_k) \frac{X^\perp}{\varphi}, e_\ell \right)
    =
    \frac{\left( 
      \D_{k} \D \varphi, 
      X^\perp
    \right)}{\varphi} 
    -
    \frac{(X^\perp, e_k)}{\varphi} 
    \overline{\Delta} \varphi\\
    \overline{\Ric}(e_k, X^\top) &= |X^\top| g^{il}\left( \overline{R}(e_i, e_k) \frac{X^\top}{|X^\top|}, e_\ell \right)
    =
    \frac{\left( 
      \D_{k} \D |X^\top|, 
      X^\top
    \right)}{|X^\top|} 
    -
    \frac{(X^\top, e_k)}{|X^\top|} 
    \overline{\Delta} |X^\top|.
  \end{align*}
  Next, using the Codazzi equations and that $h_{ij} = \frac{\varphi}{|X^\perp|} g_{ij}$ along $S_\alpha$, the covariant derivatives of $h_{ij}$ vanish and hence $\overline{R}_{ijkX^\perp}$ vanishes. For the same reason $\overline{R}_{ijkX^\top}$ also vanishes. 

  Using that we get that for any $Y$ tangent to $S_\alpha$
  \begin{align*}
    0 = \overline{\Ric}(Y,X^\perp) = \frac{(\D_Y \D \varphi, X^\perp)}{\varphi}
  \end{align*}
  which gives us that $(\D_Y \D \varphi, X^\perp) = 0$, similarly for any $Y$ tangent to $\mathfrak{F}$ we get
  \begin{align*}
    0 = \overline{\Ric}(Y,X^\top) = \frac{(\D_Y \D |X^\top|, X^\top)}{|X^\top|}
  \end{align*}
  giving us $(\D_Y \D |X^\top|, X^\top) = 0$.

  Plugging this back into the previous equations gives us 
  \begin{align*}
    \overline{\Ric}(Y, X^\perp) 
    &=
    \frac{\left( 
      \D_{Y} \D \varphi, 
      X^\perp
    \right)}{\varphi} 
    -
    \frac{(X^\perp, Y)}{\varphi} 
    \overline{\Delta} \varphi,
    \\ &=
    \frac{\left( 
      \D_{Y^\perp} \D \varphi, 
      X^\perp
    \right)}{\varphi} 
    +
    \frac{\left( 
      \D_{Y^\top} \D \varphi, 
      X^\perp
    \right)}{\varphi} 
    -
    \frac{(X^\perp, Y)}{\varphi} 
    \overline{\Delta} \varphi
    \\ &=
    -\frac{(X^\perp, Y)}{\varphi}
    \left(
    \overline{\Delta} \varphi
    -
    \left( 
    \D_{N^\perp} \D \varphi, 
    N^\perp
    \right)
    \right).
  \end{align*}
  And similarly for $X^\top$ we have 
  \begin{align*}
    \overline{\Ric}(Y, X^\top) 
    &=
    \frac{\left( 
      \D_{Y} \D |X^\top|, 
      X^\top
    \right)}{|X^\top|} 
    -
    \frac{(X^\top, Y)}{|X^\top|} 
    \overline{\Delta} |X^\top|
    \\ &=
    - \frac{(X^\top, Y)}{|X^\top|}
    \left( 
    \overline{\Delta} |X^\top|
    -
    \left( 
    \D_{N^\top} \D |X^\top|, 
    N^\top
    \right)
    \right).
  \end{align*}
\end{proof}

\begin{remark}
  We have 
  \begin{align*}
    (\D_Y \D \lambda, Z)
    &=
    2 
    (\D_Y \Lambda X^\perp, Z)
    \\ &=
    \frac{2}{\varphi^2} Y(\Lambda\varphi^2) 
    (X^\perp, Z)
    -
    \frac{4\Lambda}{\varphi} Y(\varphi) 
    (X^\perp, Z)
    +
    2\Lambda\varphi (Y,Z)
    \\ &+
    2 \Lambda\frac{Y(\varphi)(X^\perp, Z)}{\varphi} - 2\Lambda\frac{Z(\varphi) (X^\perp, Y)}{\varphi}
    \\ &=
    \frac{2}{\varphi^2} Y(\Lambda\varphi^2) 
    (X^\perp, Z)
    -
    \frac{2\Lambda}{\varphi} Y(\varphi) 
    (X^\perp, Z)
    +
    2\Lambda\varphi (Y,Z) - 2\Lambda\frac{Z(\varphi) (X^\perp, Y)}{\varphi}.
  \end{align*}
  By symmetry of the Hessian we have 
  \begin{align*}
    0 
    &= \left( \D_Y \D \lambda , Z \right) - \left( \D_Z \D \lambda, Y \right)
    = \left( \D_Y 2\Lambda X^\perp, Z \right) - \left( \D_Z 2\Lambda X^\perp, Y \right)
    \\ &= \frac{2}{\varphi^2} Y(\Lambda\varphi^2) 
    (X^\perp, Z)- \frac{2}{\varphi^2} Z(\Lambda\varphi^2) 
    (X^\perp, Y).
  \end{align*} 
  Plugging in $Y = X^\perp$ we get 
  \[
    X^\perp(\Lambda\varphi^2) 
    (X^\perp, Z) = Z(\Lambda\varphi^2) 
    (X^\perp, X^\perp)
  \]
  giving us 
  \[
    \D (\Lambda \varphi^2) = \frac{X^\perp(\Lambda\varphi^2)}{|X^\perp|^2} X^\perp.
  \]
\end{remark}

\section{Evolution Equations}\label{sec:evolution-equations}

In this section we show that under the flow \eqref{eq:main-flow} the volume is preserved and area is decreased.
We then compute the evolution equations for $\lambda = |X^{\perp}|^{2}/\varphi^{2}$, $u$ and $H$.
For simplicity, we do this first assuming $X$ is time-independent, noting that the equations in case $X$ is time-dependent follow easily.
To begin, we state and prove the following Minkowski identities.

\begin{proposition}
  Let $\Sigma \subset U$ be an embedded closed hypersurface.
  Then
  \begin{align*}
    \int_{\Sigma} Hu &= \int_{\Sigma} n\varphi\\
    \int_{N} H(n\varphi - Hu) 
    &=
    \frac{n}{(n-1)} \int_{N} u(\overline\Ric(\mathcal{N}^\perp, \mathcal{N}^\perp) - \overline\Ric(\nu, \nu))
    -\int_{N} \sum_{i < j} (\kappa_i - \kappa_j)^2u.
  \end{align*}
  \label{prop:minkowski-identities}
\end{proposition}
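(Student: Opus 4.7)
The plan is to derive both identities from the divergence theorem on $\Sigma$, using the conformal Killing identity combined with the Codazzi equations and Proposition \ref{prop:ric}. To avoid confusion with the decomposition $X = X^\perp + X^\top$ relative to the foliation $\mathfrak{F}$, I will write $X^\Sigma := X - u\nu$ for the tangential part of $X$ along $\Sigma$.

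For the first identity, I would compute $\mathrm{div}_\Sigma X^\Sigma$ in a local orthonormal frame $\{e_i\}_{i=1}^n$ of $\Sigma$. Tracing the conformal Killing identity \eqref{eq:conformal-vector-field-metric-equation} along the tangent directions gives $\sum_i \overline{g}(\overline{\nabla}_{e_i}X, e_i) = n\varphi$, while the Weingarten formula together with $\overline{g}(\nu,e_i)=0$ gives $\sum_i \overline{g}(\overline{\nabla}_{e_i}(u\nu), e_i) = uH$. Hence $\mathrm{div}_\Sigma X^\Sigma = n\varphi - uH$, and integrating over the closed hypersurface $\Sigma$ by the divergence theorem produces the first identity.

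For the second identity, I would start from $\int_\Sigma H(n\varphi - uH) = \int_\Sigma H\,\mathrm{div}_\Sigma X^\Sigma = -\int_\Sigma X^\Sigma(H)$, obtained by integration by parts on the closed surface. Expanding $X^\Sigma(H) = (X^\Sigma)^i \nabla_i H$ and invoking the traced Codazzi equation $\nabla_i H = \nabla_j h_i{}^j \pm \overline{\Ric}(\nu, e_i)$, a further integration by parts converts $\int_\Sigma (X^\Sigma)^i \nabla_j h_i{}^j$ into $-\int_\Sigma h^{ij}\nabla_i (X^\Sigma)_j$. The symmetric part of the tensor $\nabla^\Sigma X^\Sigma$ can be read off from the conformal Killing equation to equal $\varphi g - u h$, so contracting with the symmetric tensor $h^{ij}$ produces $h^{ij}\nabla_i (X^\Sigma)_j = \varphi H - u|A|^2$.

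The remaining step is to simplify the Ricci contribution $\int_\Sigma \overline{\Ric}(\nu, X^\Sigma)$. By Proposition \ref{prop:ric} one has $\overline{\Ric}(X^\perp, Y) = (X^\perp, Y)\,\overline{\Ric}(\mathcal{N}^\perp, \mathcal{N}^\perp)$ and analogously $\overline{\Ric}(X^\top, Y) = (X^\top, Y)\,\overline{\Ric}(\mathcal{N}^\top, \mathcal{N}^\top)$, and Assumption \ref{assumption2}(x) equates these two Ricci values wherever $X^\top \neq 0$, yielding the clean identity
\[
  \overline{\Ric}(X, Y) = (X,Y)\,\overline{\Ric}(\mathcal{N}^\perp, \mathcal{N}^\perp)
\]
on all of $U$. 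Setting $Y = \nu$ and using $X^\Sigma = X - u\nu$ produces $\overline{\Ric}(\nu, X^\Sigma) = u\bigl[\overline{\Ric}(\mathcal{N}^\perp, \mathcal{N}^\perp) - \overline{\Ric}(\nu,\nu)\bigr]$. Assembling the pieces gives an expression for $\int_\Sigma H(n\varphi - uH)$ involving $\int \varphi H$, $\int u|A|^2$, $\int uH^2$, and the Ricci difference; eliminating $\int \varphi H$ via the first identity and using $\sum_{i<j}(\kappa_i - \kappa_j)^2 = n|A|^2 - H^2$ yields the stated second identity. The main obstacle is the Ricci reduction above, which requires combining Proposition \ref{prop:ric} with Assumption \ref{assumption2}(x) rather than either alone; careful bookkeeping of signs from the Codazzi trace is a secondary technical point.
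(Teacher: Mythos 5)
Your proposal is correct and follows essentially the same route as the paper's own proof: both apply the divergence theorem to the tangential projection of $X$ along $\Sigma$, the paper organizing the computation through Kwong's Newton transformation identities for $k=0$ and $k=1$ (so that $\Div T_1$ carries the Codazzi/curvature term), while you carry out the divergence and traced-Codazzi steps by hand and identify the symmetric part of $\nabla^\Sigma X^\Sigma$ with $\varphi g - uh$. The Ricci reduction via Proposition \ref{prop:ric} together with Assumption \ref{assumption2}(x) is exactly what the paper does. One small slip worth fixing: you cannot ``eliminate $\int_\Sigma \varphi H$ via the first identity'' --- $\int_\Sigma Hu = \int_\Sigma n\varphi$ says nothing about $\int_\Sigma \varphi H$. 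What actually closes the argument is that the equation you have just derived, namely $\int_\Sigma (n\varphi H - uH^2) = \int_\Sigma (\varphi H - u|A|^2) + \int_\Sigma u\bigl(\overline{\Ric}(\mathcal{N}^\perp,\mathcal{N}^\perp)-\overline{\Ric}(\nu,\nu)\bigr)$, is itself linear in $\int_\Sigma \varphi H$, so you solve it for $\int_\Sigma \varphi H$ and substitute back. Carrying this out (and using $\sum_{i<j}(\kappa_i-\kappa_j)^2 = n|A|^2-H^2$) actually produces a coefficient $\tfrac{1}{n-1}$ in front of $\int_\Sigma \sum_{i<j}(\kappa_i-\kappa_j)^2\,u$, which the stated Proposition omits; the paper's own final line has the same discrepancy, since $H^2 - \tfrac{2n}{n-1}H_2 = \tfrac{1}{n-1}\sum_{i<j}(\kappa_i-\kappa_j)^2$. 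This is harmless because only the sign of that term is used downstream.
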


\begin{proof} 
We will follow the proof of \cite{kwong_extension_2014}, use orthonormal coordinates and let $X'$ denote the projection of $X$ onto $TN$, (not to be confused with $X^\top$).
Computing
\[
  \Div\left( f T_k(X') \right)
  =
  ( T_k(\nabla f), X')
  +
  f(\Div T_k)(X')
  +
  \frac{1}{2} f ( T_k^\flat, \iota^* (\mathcal{L}_X \overline{g}) )
  - 
  f ( T_k^\flat, h^{i}_{j} u ),
\]
we can choose $f = 1$, note that $\mathcal{L}_X \overline{g} = 2 \varphi \overline{g}$ and compute 
\[
  \Div\left( T_k(X') \right)
  =
  (\Div T_k)(X')
  +
  \varphi ( T_k^\flat, \overline{g} )
  - 
  ( T_k^\flat, h^{i}_{j} u ).
\]
Integrating then gives
\[
  \int_{N} ( T_k^\flat, h^{i}_{j} u ) =
  \int_{N} \Div T_k(X')
  +
  \int_{N} \varphi ( T_k^\flat, \overline{g} ).
\]

Next, using \cite[Lemma 2.2]{andrzejewski_newton_2010} for $k = 0$, we have 
\[
  T_0 = I, \quad \Div T_0 = 0
\]
and so 
\begin{align*}
  \int_{N} ( \delta_{ij}, h^{i}_{j} u ) &=
  \int_{N} \varphi ( \delta_{ij}, g_{ij} )\\
\end{align*}
which is equivalent to
\begin{align*}
  \int_{N} H u &= 
  \int_{N} n \varphi,
\end{align*}
thereby proving the first identity.

Now for the second identity, we use the $k = 1$ case, in this case we get (c.f. \cite[Lemma 3.1]{alias_constant_2006}) 
\begin{align*}
  T_1 &= H I - h^{i}_j, \\
  \Div T_1 X' &= -( (\overline R(\nu, e^{j}) e^{j}), X'), \quad\\
  \Tr(T_1) &= (n-1)H, \\ 
  ( T_1, h^i_j ) &= 2 H_2 .
\end{align*}
Using Proposition \ref{prop:ric} and Assumptions \ref{assumption2}(x), we get
\begin{align*}
  ( \overline R(\nu, e^{j}) e^{j}, X')
  &=
  \overline\Ric(\nu, e_j) (e_j, X') 
  = \overline\Ric(\nu, X) - u \overline\Ric(\nu, \nu)
  \\ &= \overline\Ric(\nu, X^\top + X^\perp) - u \overline\Ric(\nu, \nu)
  \\ &= \overline\Ric(\nu, X^\top) + \overline\Ric(\nu, X^\perp) - u \overline\Ric(\nu, \nu)
  \\ &= u^\top \overline\Ric(\mathcal{N}^\top, \mathcal{N}^\top) + u^\perp\overline\Ric(\mathcal{N}^\perp, \mathcal{N}^\perp) - u \overline\Ric(\nu, \nu)
  \\ &= u \overline\Ric(\mathcal{N}^\perp, \mathcal{N}^\perp) - u \overline\Ric(\nu, \nu),
\end{align*}
and so the equation simplifies to be 
\[
  2 \int_{N} H_2 u =
  (n-1) \int_{N} \varphi H
  -
  \int_{N} (u \overline\Ric(\mathcal{N}^\perp, \mathcal{N}^\perp) - u \overline\Ric(\nu, \nu)).
\]
We can now use this to get 
\begin{align*}
  &\int_{N} H(n\varphi - Hu) \\
  &=
  \int_{N} Hn\varphi - \frac{2n}{(n-1)}H_2u 
  + \frac{2n}{(n-1)}H_2u - H^2u\\
  &=
  \frac{n}{(n-1)}\int_{N} (n-1) H \varphi - 2 H_2u 
  + \int_{N}\frac{2n}{(n-1)}H_2u - H^2u\\
  &=
  \frac{n}{(n-1)} \int_{N} (u \overline\Ric(\mathcal{N}^\perp, \mathcal{N}^\perp) - u \overline\Ric(\nu, \nu)) + \int_{N}\frac{2n}{(n-1)}H_2u - H^2u\\
  &=
  \frac{n}{(n-1)}\int_{N} u(\overline\Ric(\mathcal{N}^\perp, \mathcal{N}^\perp) - \overline\Ric(\nu, \nu)) -\int_{N} \left(H^2 - \frac{2n}{n-1}H_2\right)u\\
  &=
  \frac{n}{(n-1)} \int_{N} u(\overline\Ric(\mathcal{N}^\perp, \mathcal{N}^\perp) - \overline\Ric(\nu, \nu))
  -\int_{N} \sum_{i < j} (\kappa_i - \kappa_j)^2u,
\end{align*}
thereby proving the second identity.

\end{proof}

\begin{corollary}
  Let $\Sigma_{0} \subset U$ be a embedded closed strictly starshaped hypersurface, suppose that, for some $T>0$, $\Sigma_{t}$ is a solution to \eqref{eq:main-flow} on $[0,T)$ with the $\Sigma_{t}$ strictly starshaped and let $A(t)$ be the surface area of $\Sigma_{t}$ and $V(t)$ the volume enclosed. 
  Then
  \[
    V'(t) = 0 \qquad \text{ and } \qquad A'(t) \leq 0.
  \]
\end{corollary}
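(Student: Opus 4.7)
My plan is to combine the standard first-variation formulas for a normal flow with the Minkowski identities of Proposition \ref{prop:minkowski-identities}, applied not to $X$ itself but to the time-dependent conformal Killing vector field $X(s) := X^{\perp} + \Xi(s) X^{\top}$ at $s = t/T_{0}$. The key observation is that, as already noted in the discussion preceding Theorem \ref{thm:main}, $X(s)$ continues to satisfy Assumptions \ref{assumption2} for $s \in [0,1]$ (and trivially for $s \geq 1$, where it reduces to $X^{\perp}$), with the same conformal factor $\varphi$ since $X^{\top}$ is isometric. Hence Proposition \ref{prop:minkowski-identities} applies with $X$ replaced by $X(t/T_{0})$ and the support function replaced by $u(t/T_{0}) = \langle X(t/T_{0}), \nu\rangle$.

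For the volume, the standard variation formula for a normal flow $\partial_{t} F = f\nu$ gives $V'(t) = \int_{\Sigma_{t}} f$. With $f = n\varphi - u(t/T_{0}) H$ and the first identity $\int_{\Sigma_{t}} u(t/T_{0}) H = \int_{\Sigma_{t}} n\varphi$ of Proposition \ref{prop:minkowski-identities} applied to $X(t/T_{0})$, the two terms cancel to give $V'(t) = 0$ immediately.

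For the area, the corresponding variation formula gives $A'(t) = \int_{\Sigma_{t}} fH = \int_{\Sigma_{t}} H(n\varphi - u(t/T_{0})H)$. The second identity of Proposition \ref{prop:minkowski-identities}, applied to $X(t/T_{0})$, rewrites this as
\[
A'(t) = \frac{n}{n-1} \int_{\Sigma_{t}} u(t/T_{0})\bigl(\overline{\Ric}(\mathcal{N}^{\perp}, \mathcal{N}^{\perp}) - \overline{\Ric}(\nu, \nu)\bigr) - \int_{\Sigma_{t}} \sum_{i<j}(\kappa_{i} - \kappa_{j})^{2} u(t/T_{0}).
\]
Assumption \ref{assumption2}(ix) asserts that $\mathcal{N}^{\perp}$ realizes the minimum Ricci curvature in $U$, so the integrand of the first term is non-positive, while that of the second is manifestly non-negative. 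Since $\Sigma_{t}$ is strictly starshaped relative to $X(t/T_{0})$ by hypothesis, $u(t/T_{0}) > 0$ on $\Sigma_{t}$, and both terms contribute with the correct sign to yield $A'(t) \leq 0$.

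The only substantive step is the verification that Proposition \ref{prop:minkowski-identities} truly applies to the deformed field $X(s)$; this is a bookkeeping check that each clause of Assumption \ref{assumption2} is preserved under $X \mapsto X^{\perp} + \Xi(s) X^{\top}$, which follows from linearity of the conformal Killing equation (with $X^{\top}$ contributing zero to the conformal factor) together with the fact that $X^{\perp}$ and $X^{\top}$ separately respect the foliation and Ricci-direction conditions. I do not anticipate a genuine obstacle beyond this consistency check.
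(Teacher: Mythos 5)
Your argument is correct and follows the same route as the paper: apply the first-variation formulas for a normal flow, then invoke both Minkowski identities of Proposition \ref{prop:minkowski-identities} with the time-dependent field $X(t/T_{0})$, and use positivity of $u$ together with Assumption \ref{assumption2}(ix) to get the signs. Your explicit note that the identities apply to $X(s)=X^{\perp}+\Xi(s)X^{\top}$ (since $\Xi(s)X^{\top}$ is still Killing, leaving $\varphi$ unchanged) is a worthwhile clarification that the paper leaves implicit.
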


\begin{proof}
  It is standard to compute evolution equations of $A$ and $V$.
  For the flow \eqref{eq:main-flow}, we have
  \begin{align*}
    V'(t) &= \int_{\Sigma_{t}} (n \varphi - uH) d\sigma\\
    A'(t) &= \int_{\Sigma_{t}} (n \varphi - uH) H d\sigma.
  \end{align*}
  Using $u>0$ and Assumptions \ref{assumption2}(ix) we have $\overline\Ric(\mathcal{N}^{\perp},\mathcal{N}^{\perp}) - \overline\Ric(\nu,\nu) \leq 0$ and so Proposition \ref{prop:minkowski-identities} gives $V'(t) = 0$ and $A'(t) \leq 0$ immediately.
\end{proof}

\begin{theorem}[Evolution Equation for $\lambda$]
  \label{thm:evolution-equation-for-S}
  \[
    \partial_t \lambda - u \Delta_g \lambda  = -2\Lambda n\varphi u^\top - u\frac{2}{\varphi^2 |X^\perp|^2} X^\perp(\Lambda \varphi^2)(|X^\perp|^2 - (u^\perp)^2)
    +
    4u\frac{\Lambda }{\varphi}e_i(\varphi)(e_i,X^\perp)
  \]
\end{theorem}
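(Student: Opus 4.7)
Because $\lambda$ is a function defined on the ambient manifold $U$ (independent of $t$) and the flow satisfies $\partial_{t}F = f\nu$ with $f = n\varphi - uH$, the time derivative along the flow is simply
\[
  \partial_{t}\lambda = f\,\nu(\lambda) = 2\Lambda u^{\perp}(n\varphi - uH),
\]
using $\overline{\nabla}\lambda = 2\Lambda X^{\perp}$ from Assumptions \ref{assumption2}(iv)--(v). The bulk of the work is therefore to compute $\Delta_{g}\lambda$ on $\Sigma_{t}$ and then to repackage the resulting terms in the form stated.

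For the Laplacian I would use the standard identity $\Delta_{g}\lambda = \overline{\Delta}\lambda - \overline{\mathrm{Hess}}(\lambda)(\nu,\nu) - H\nu(\lambda)$ on a hypersurface. Differentiating $\overline{\nabla}\lambda = 2\Lambda X^{\perp}$ and inserting Proposition \ref{prop:grad-of-xtop-xperp} gives
\[
  \overline{\nabla}_{Y}\overline{\nabla}\lambda
  = 2Y(\Lambda)X^{\perp} + 2\Lambda\varphi Y + \frac{2\Lambda Y(\varphi)}{\varphi}X^{\perp} - \frac{2\Lambda(X^{\perp},Y)}{\varphi}\overline{\nabla}\varphi .
\]
Tracing over an ambient orthonormal frame yields $\overline{\Delta}\lambda = 2X^{\perp}(\Lambda) + 2(n+1)\Lambda\varphi$, the last two terms cancelling pairwise. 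Plugging $Y=\nu$ into the Hessian formula gives $\overline{\mathrm{Hess}}(\lambda)(\nu,\nu) = 2\nu(\Lambda)u^{\perp} + 2\Lambda\varphi$. Together with $\nu(\lambda) = 2\Lambda u^{\perp}$ this gives
\[
  \Delta_{g}\lambda = 2X^{\perp}(\Lambda) + 2n\Lambda\varphi - 2\nu(\Lambda)u^{\perp} - 2H\Lambda u^{\perp}.
\]

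Now I form $\partial_{t}\lambda - u\Delta_{g}\lambda$. The $2uH\Lambda u^{\perp}$ terms cancel, and combining the $2n\Lambda\varphi$ pieces via $u^{\perp} - u = -u^{\top}$ produces the leading term $-2\Lambda n\varphi u^{\top}$. What remains is
\[
  -2uX^{\perp}(\Lambda) + 2u\,\nu(\Lambda)u^{\perp}.
\]
To match the stated right-hand side I would invoke the Remark preceding the theorem: since $\overline{\nabla}(\Lambda\varphi^{2})$ is colinear with $X^{\perp}$, one has
\[
  Y(\Lambda) = \frac{X^{\perp}(\Lambda\varphi^{2})}{\varphi^{2}|X^{\perp}|^{2}}(Y,X^{\perp}) - \frac{2\Lambda}{\varphi}Y(\varphi).
\]
Applying this with $Y=X^{\perp}$ and with $Y=\nu$, the combination $-2uX^{\perp}(\Lambda) + 2u\nu(\Lambda)u^{\perp}$ becomes
\[
  -u\,\frac{2X^{\perp}(\Lambda\varphi^{2})}{\varphi^{2}|X^{\perp}|^{2}}\bigl(|X^{\perp}|^{2}-(u^{\perp})^{2}\bigr) + \frac{4u\Lambda}{\varphi}\bigl(X^{\perp}(\varphi) - u^{\perp}\nu(\varphi)\bigr).
\]
The expected main obstacle is this last repackaging; the key observation that makes it work is $X^{\perp}(\varphi) - u^{\perp}\nu(\varphi) = (X^{\perp})^{T}(\varphi) = \sum_{i}e_{i}(\varphi)(e_{i},X^{\perp})$, where $(X^{\perp})^{T}$ is the tangential projection of $X^{\perp}$ to $\Sigma$ and $\{e_{i}\}$ is any orthonormal tangent frame. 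Substituting this identity delivers exactly the claimed right-hand side.
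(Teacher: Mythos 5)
Your proposal is correct. I checked each step: the formula $\overline{\nabla}_{Y}\overline{\nabla}\lambda = 2Y(\Lambda)X^{\perp} + 2\Lambda\varphi Y + \frac{2\Lambda Y(\varphi)}{\varphi}X^{\perp} - \frac{2\Lambda(X^{\perp},Y)}{\varphi}\overline{\nabla}\varphi$ follows from differentiating $2\Lambda X^\perp$ with Proposition \ref{prop:grad-of-xtop-xperp}; its trace $\overline{\Delta}\lambda = 2X^{\perp}(\Lambda) + 2(n+1)\Lambda\varphi$ and normal-normal component $2\nu(\Lambda)u^{\perp} + 2\Lambda\varphi$ are right; $\Delta_{g}\lambda = 2X^{\perp}(\Lambda) + 2n\Lambda\varphi - 2\nu(\Lambda)u^{\perp} - 2H\Lambda u^{\perp}$ follows; the $u^{\perp}-u = -u^{\top}$ cancellation gives the leading $-2\Lambda n\varphi u^{\top}$; and the rewriting of $-2uX^{\perp}(\Lambda) + 2u\nu(\Lambda)u^{\perp}$ using $Y(\Lambda) = \frac{X^{\perp}(\Lambda\varphi^{2})}{\varphi^{2}|X^{\perp}|^{2}}(Y,X^{\perp}) - \frac{2\Lambda}{\varphi}Y(\varphi)$ together with $X^{\perp}(\varphi)-u^{\perp}\nu(\varphi) = \sum_i e_i(\varphi)(e_i,X^\perp)$ reproduces the stated right-hand side exactly.

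Your route is organized differently from the paper's. The paper computes $\Delta_{g}\lambda$ entirely intrinsically: it writes $\Delta_{g}\lambda = \nabla_i\nabla_i\lambda = \nabla_i\bigl(2\Lambda(X^\perp,e_i)\bigr)$, sums over a tangent frame so that $2(X^\perp,e_i)\nabla_i\Lambda$ becomes $2\overline{\nabla}_{X^\perp - u^\perp\nu}\Lambda$, and then immediately rewrites $\Lambda = \Lambda\varphi^2/\varphi^2$ to invoke the colinearity of $\overline{\nabla}(\Lambda\varphi^2)$ with $X^\perp$ \emph{inside} the Laplacian computation. You instead compute the full ambient Hessian of $\lambda$, trace over an ambient frame, and subtract the normal-normal component and the $H\nu(\lambda)$ correction; the colinearity Remark is then applied \emph{after} forming $\partial_t\lambda - u\Delta_g\lambda$, to repackage the residual $-2uX^\perp(\Lambda)+2u\nu(\Lambda)u^\perp$. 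Both approaches use exactly the same two inputs (Proposition \ref{prop:grad-of-xtop-xperp} and the Remark on $\overline{\nabla}(\Lambda\varphi^2)$), and neither is shorter; your version has the mild advantage of computing the ambient Hessian once and reusing it for both the trace and the $(\nu,\nu)$-component, which also lets one sanity-check that the Hessian formula is symmetric before proceeding.
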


\begin{proof}
  We have the following evolution equation for $\lambda$
  \begin{align*}
    \partial_t \lambda
    = (n \varphi - Hu) (\nu, \D \lambda)
    = 2 \Lambda  u^\perp (n \varphi - Hu)
  \end{align*}
  as well as 
  \begin{align*}
    \Delta_g \lambda
    &=
    \nabla_i \nabla_i \lambda
    =
    \nabla_i (2\Lambda (X^\perp, e_i))
    =
    2\Lambda  \nabla_i (X^\perp, e_i)
    +
    2(X^\perp, e_i) \nabla_i \Lambda 
    \\ &= 
    2\Lambda  (\nabla_i X^\perp, e_i)
    +
    2\Lambda  (X^\perp, \nabla_i e_i)
    +
    2\D_{X^\perp - u^\perp\nu} \Lambda 
    \\ &= 
    2\Lambda n \varphi
    -
    2\Lambda H u^\perp
    +
    2(\D \Lambda ,X^\perp - u^\perp\nu )
    \\ &= 
    2\Lambda n \varphi
    -
    2\Lambda H u^\perp
    +
    \frac{2}{\varphi^2}(\D (\Lambda \varphi^2),X^\perp - u^\perp\nu )
    -
    \frac{2}{\varphi^2}\Lambda (\D (\varphi^2),X^\perp - u^\perp\nu )
    \\ &= 
    2\Lambda n \varphi
    -
    2\Lambda H u^\perp
    +
    \frac{2}{\varphi^2|X^\perp|^2}X^\perp(\Lambda \varphi^2) (|X^\perp|^2 - (u^\perp)^2)
    -
    4\frac{\Lambda }{\varphi}e_i(\varphi)(e_i,X^\perp).
  \end{align*}
  Putting this altogether gives the desired result.
  \[
    \partial_t \lambda - u \Delta_g \lambda = -2\Lambda n\varphi u^\top - u\frac{2}{\varphi^2 |X^\perp|^2} X^\perp(\Lambda \varphi^2)(|X^\perp|^2 - (u^\perp)^2)
    +
    4u\frac{\Lambda }{\varphi}e_i(\varphi)(e_i,X^\perp)
  \]

\end{proof}

\begin{corollary}
  \label{cor:S-is-bounded}
  Suppose $\Sigma_{0}$ has evolution $\Sigma_{t}$ obeying the flow \eqref{eq:main-flow}.
  Then
  \[
    \min_{\Sigma_{0}} \lambda \leq \lambda \leq \max_{\Sigma_{0}} \lambda
  \]
  for all time $t$ the flow exists and all points on $\Sigma_{t}$.
\end{corollary}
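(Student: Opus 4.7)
The plan is to apply the parabolic maximum principle to the evolution equation for $\lambda$ derived in Theorem \ref{thm:evolution-equation-for-S}. The key observation is that at any point where $\lambda|_{\Sigma_t}$ attains its spatial maximum or minimum, every source term on the right-hand side of that evolution equation vanishes, reducing the equation to $\partial_t \lambda = u\,\Delta_g \lambda$, whose sign is then controlled by the standard second-order conditions.

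First, I would fix $t$ and let $p \in \Sigma_t$ be a point where $\lambda|_{\Sigma_t}$ attains its maximum. The first-order condition gives that the tangential component of $\nabla \lambda$ at $p$ vanishes, while the second-order condition gives $\Delta_g \lambda(p) \leq 0$. Since $\nabla \lambda = 2\Lambda X^\perp$ with $\Lambda > 0$ by Assumptions \ref{assumption2}(iv)--(v), the vanishing tangential component forces $X^\perp(p) = u^\perp(p)\,\nu(p)$. This immediately yields $(e_i, X^\perp) = 0$ for every tangent frame vector $e_i$ at $p$ and $|X^\perp|^2 - (u^\perp)^2 = 0$ at $p$, killing the last two terms on the right-hand side of Theorem \ref{thm:evolution-equation-for-S}.

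Next, I claim $u^\top(p) = 0$ as well. Indeed $X^\top \in \mathcal{D}$ by construction, and $\mathcal{D}$ is pointwise $\overline{g}$-orthogonal to $X^\perp$. Since $X^\perp(p)$ is parallel to $\nu(p)$, we conclude $X^\top(p) \perp \nu(p)$, so $u^\top(p) = 0$, which kills the first source term. Combined with $u > 0$ (strict starshapedness) and $\Delta_g \lambda(p) \leq 0$, the evolution equation reduces at $p$ to $\partial_t \lambda = u\,\Delta_g \lambda \leq 0$. The identical argument at a spatial minimum of $\lambda$ on $\Sigma_t$ yields $\partial_t \lambda \geq 0$ there. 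Hence $\max_{\Sigma_t}\lambda$ is nonincreasing and $\min_{\Sigma_t}\lambda$ is nondecreasing in $t$, which is the claim for the time-independent case.

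For the time-dependent flow \eqref{eq:main-flow}, the derivation in Theorem \ref{thm:evolution-equation-for-S} goes through with $u$ replaced by $u(t/T_0) = u^\perp + \Xi(t/T_0)u^\top$; the only affected source term is $-2\Lambda n\varphi u^\top$, which merely picks up a factor of $\Xi(t/T_0)$. Since $u^\top$ itself vanishes at the spatial extrema by the same orthogonality argument, the conclusion is unchanged. The only nontrivial point in the whole proof is the orthogonality observation $u^\top(p) = 0$ at extrema of $\lambda$; without it the source term would have indefinite sign and the maximum principle would fail. Everything else is a routine scalar parabolic maximum principle argument.
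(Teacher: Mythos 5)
Your proof is correct and takes essentially the same approach as the paper: apply the maximum principle to the evolution equation from Theorem \ref{thm:evolution-equation-for-S}, observing that at a spatial critical point of $\lambda$ the identity $\overline\grad\lambda = 2\Lambda X^\perp$ with $\Lambda>0$ forces $X^\perp$ to be normal to $\Sigma_t$, which in turn forces $(e_i,X^\perp)=0$, $(u^\perp)^2=|X^\perp|^2$, and $u^\top=0$, so that all source terms vanish. You spell out the orthogonality reason for $u^\top=0$ and the second-order condition $\Delta_g\lambda \le 0$ (resp.\ $\ge 0$) a bit more explicitly than the paper does, and you correctly note that the time-dependent modification only multiplies the $u^\top$ source term by $\Xi(t/T_0)$, which is harmless since that term already vanishes at extrema; these are expository clarifications rather than a different route.
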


\begin{proof}
  Using Theorem \ref{thm:evolution-equation-for-S} we will show that $\lambda$ satisfies $\p_{t}\lambda = u \Delta_{g}\lambda$ at critical points of $\lambda$ and hence may apply the maximum principle to obtain the result.
  Let $(p_{0},t_{0})$ be a critical point, where $p_{0} \in \Sigma_{t_{0}}$.
  Let $\left\{ e_{i} \right\}$ be a frame for $\Sigma_{t_{0}}$.
  Recalling $\overline\grad \lambda = 2\Lambda  X^{\perp}$, we have at $(p_{0},t_{0})$ that
  \[
    (\overline\grad \lambda, e_{i}) = 2\Lambda  (X^{\perp},e_{i}) = \frac{\varphi^{2} - X^{\perp}\varphi}{\varphi^{3}} (X^{\perp},e_{i}) = 0,
  \]
  and so $X^{\perp}$ is normal to $\Sigma_{t_{0}}$.
  This implies $u^{\top} = 0$ and $ u^{\perp} = |X^{\perp}|$, providing the desired result. 
\end{proof}

\begin{remark}
  By Corollary \ref{cor:S-is-bounded} we have that the evolution $\Sigma_{t}$ of $\Sigma_{0}$ under the flow \eqref{eq:main-flow} remains a compact set bounded by two leaves of the foliation $\mathfrak{F}$.
  Note also that this shows that the flow remains inside $U$.
\end{remark}

\begin{theorem}[Evolution Equation for $u$]
  \label{thm:evolution-equation-for-u}
  \begin{align*}
    \partial_{t} u - u\Delta_g u
    = n \varphi^2 - n X(\varphi) 
    - 2\varphi Hu + |A|^2 u^2 + 2nu\nu(\varphi)
    +
    u^2\overline{\Ric}(\nu,\nu)
    +
    H\left( X, \nabla u\right)
  \end{align*}
\end{theorem}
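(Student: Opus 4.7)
The plan is to compute $\p_{t} u$ and $\Delta_{g} u$ separately at a point of the evolving hypersurface $\Sigma_{t}$ and then combine. For the time derivative I would use that, under a normal flow $\p_{t} F = f\nu$, the unit normal evolves by $\p_{t}\nu = -\D^{\Sigma} f$; combined with the conformal Killing identity $(\D_{\nu} X,\nu) = \varphi$ from \eqref{eq:conformal-vector-field-metric-equation}, this gives
\[
  \p_{t} u = (\D_{\p_{t} F} X,\nu) + (X,\p_{t}\nu) = f\varphi - X^{\parallel}(f),
\]
where $X^{\parallel} = X - u\nu$ denotes the projection of $X$ onto $T\Sigma_{t}$. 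Writing $f = n\varphi - uH$ and using $X^{\parallel}(\varphi) = X(\varphi) - u\nu(\varphi)$ together with $X^{\parallel}(H) = (X,\D H)$ and $X^{\parallel}(u) = (X,\D u)$ directly yields
\[
  \p_{t} u = n\varphi^{2} - uH\varphi - nX(\varphi) + nu\nu(\varphi) + u(X,\D H) + H(X,\D u).
\]

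For the Laplacian of $u$ I would fix a local $\Sigma_{t}$-orthonormal frame $\{e_{i}\}$ and work at a point in intrinsic normal coordinates, using $\D_{e_{i}}\nu = h_{ij} e_{j}$ and the Gauss formula $\D_{e_{i}} e_{j} = \D^{\Sigma}_{e_{i}} e_{j} - h_{ij}\nu$. A first differentiation gives $e_{i}(u) = (\D_{e_{i}} X,\nu) + h_{ij}(X,e_{j})$, and tracing the second derivative yields
\[
  \Delta u = \sum_{i}(\D_{e_{i}}\D_{e_{i}} X,\nu) + 2 h_{ij}(\D_{e_{i}} X, e_{j}) + (\D^{i} h_{ij})(X,e_{j}) - |A|^{2} u.
\]
The three nontrivial terms are then simplified using: the symmetrization of $\mathcal{L}_{X}\overline{g} = 2\varphi\overline{g}$ paired with the symmetry of $h$, giving $h_{ij}(\D_{e_{i}} X, e_{j}) = \varphi H$; the contracted Codazzi--Mainardi identity $\D^{i} h_{ij} = \D_{j} H + \overline{\Ric}(\nu, e_{j})$; and the Bochner-type identity for conformal Killing fields $\overline{\Delta} X = -(n-1)\overline{\D}\varphi - \overline{\Ric}(X)$, which combined with the pointwise rewriting $\overline{\Delta} X = \sum_{i}\D_{e_{i}}\D_{e_{i}} X + \D_{\nu}\D_{\nu} X + H\D_{\nu} X$ and the identity $(\D_{\nu}\D_{\nu} X,\nu) = \nu(\varphi)$ gives
\[
  \sum_{i}(\D_{e_{i}}\D_{e_{i}} X,\nu) = -n\nu(\varphi) - \overline{\Ric}(X,\nu) - H\varphi.
\]
Substituting everything, the two cross $\overline{\Ric}(X,\nu)$ contributions from the Codazzi and Bochner inputs exactly cancel, leaving
\[
  \Delta u = \varphi H - n\nu(\varphi) + (X,\D H) - |A|^{2} u - u\,\overline{\Ric}(\nu,\nu).
\]

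Finally, $\p_{t} u - u\Delta u$ collects the terms: $u(X,\D H)$ cancels, $\varphi H u$ combines with the $+\varphi H u$ coming from $-u\Delta u$ to give the stated $-2\varphi H u$, the two $n u\nu(\varphi)$ contributions add to $2 n u\nu(\varphi)$, and $|A|^{2} u^{2}$ and $u^{2}\overline{\Ric}(\nu,\nu)$ appear with the asserted signs, producing the claimed right-hand side. The main obstacle lies in step 2: correctly executing the tangential--normal decomposition and tracking the cancellation of the $\overline{\Ric}(X,\nu)$ cross-term produced both by the contracted Codazzi identity and by $(\overline{\Delta} X,\nu)$. A streamlined alternative that absorbs much of this bookkeeping is the pointwise second-order conformal Killing identity
\[
  \D_{a}\D_{b} X_{c} = -\overline{R}_{cbad} X^{d} + \overline{g}_{ab}\D_{c}\varphi - \overline{g}_{ac}\D_{b}\varphi + \overline{g}_{bc}\D_{a}\varphi,
\]
whose contraction with $g^{ab}\nu^{c}$ directly reproduces the formula for $\sum_{i}(\D_{e_{i}}\D_{e_{i}} X,\nu)$ in one step.
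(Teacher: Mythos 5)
Your proof is correct and takes a genuinely different route from the paper's. The paper splits $u = u^\perp + u^\top$ and computes $\Delta_g u^\perp$, $\Delta_g u^\top$ separately, expanding each covariant derivative via their structural formulas for $\D_Y X^\perp$ and $\D_Y X^\top$ (Proposition \ref{prop:grad-of-xtop-xperp}) and relating the resulting Hessians of $\varphi$ and $|X^\top|$ to Ricci through Proposition \ref{prop:ric}; this involves a lengthy term-by-term cancellation (their ``Claims 1 and 2'') and additionally forces a continuity argument to handle the locus where $|X^\top|=0$, since their formula for $\D_Y X^\top$ is only valid away from that set. You instead treat $X$ as a bare conformal Killing field, never invoking the splitting, and feed the general second-order conformal Killing identity (equivalently, the Bochner-type formula $\overline\Delta X = -(n-1)\overline\D\varphi - \overline{\Ric}(X,\cdot)^\sharp$ on the $(n+1)$-manifold) into the standard tangential--normal decomposition of $\overline\Delta$ along $\Sigma_t$; combined with the contracted Codazzi identity, the two $\overline{\Ric}(X,\nu)$ cross-terms cancel exactly as you say, giving $\Delta_g u = \varphi H + (X,\nabla H) - |A|^2 u - n\nu(\varphi) - u\,\overline{\Ric}(\nu,\nu)$, which matches what the paper obtains after adding their two Claims. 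Your route is cleaner and more robust: it sidesteps the $|X^\top|=0$ degeneracy entirely, uses only that $X$ is conformal Killing (the final formula genuinely does not depend on the $\perp/\top$ splitting), and reduces the bookkeeping to a single well-known curvature identity. What the paper's computation buys is that it stays inside the structural toolkit they set up for the foliation (Propositions \ref{prop:grad-of-xtop-xperp} and \ref{prop:ric}), which they reuse elsewhere; but for this particular evolution equation the split is unnecessary overhead. One small caution: the exact signs in your displayed pointwise identity $\D_a\D_b X_c = -\overline{R}_{cbad}X^d + \overline g_{ab}\D_c\varphi - \overline g_{ac}\D_b\varphi + \overline g_{bc}\D_a\varphi$ depend on curvature sign conventions, so you should double-check them against the paper's; but the traced consequences you actually use, namely $(\D_\nu\D_\nu X,\nu)=\nu(\varphi)$ and $(\overline\Delta X,\nu) = -\overline{\Ric}(X,\nu)-(n-1)\nu(\varphi)$, are correct, and the final formula for $\Delta_g u$ and hence the theorem follows.
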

\begin{proof}
  First let us assume that at the point of evaluation $|X^\top| \neq 0$. 
  Then for $u$ we have the following time evolution:
  \begin{align*}
    \partial_t u 
    &=
    \left( \partial_t X, \nu \right)
    +
    \left( X, \partial_t \nu \right)
    =
    (n\varphi - Hu)\left( \D_\nu X, \nu \right)
    +
    \left( X, -\nabla(n \varphi - Hu)\right)
    \\ &=
    (n\varphi - Hu)\varphi
    -
    n \left( X, \nabla \varphi \right)
    +
    H\left( X, \nabla u\right)
    +
    u\left( X, \nabla H\right)
    \\ &=
    (n\varphi - Hu)\varphi
    -
    n X(\varphi) + nu\nu(\varphi)
    +
    H\left( X, \nabla u\right)
    +
    u\left( X, \nabla H\right).
  \end{align*}

  To express $\Delta_{g} u$, we treat $u^\perp$ and $u^\top$ separately.
  For $u^\perp$ we have
  \begin{equation*}
    \nabla_i (\nabla_i u^\perp)
    =
    \nabla_i (\nabla_i (X^\perp, \nu))
    =
    \nabla_i ((\nabla_i X^\perp, \nu))
    +
    \nabla_i ((X^\perp,\nabla_i \nu)).
  \end{equation*}
  To compute the two terms, we state and prove the following claim.

  \noindent
  \textit{Claim 1.}
    \begin{equation*}
      \nabla_i (\nabla_i X^\perp, \nu)
      =
      -\Ric(X^\perp, \nu) - n\nu(\varphi).
    \end{equation*}
    \begin{proof}[Proof of Claim 1]
    Using Proposition \ref{prop:grad-of-xtop-xperp}, to get the following terms
    \begin{align*}
      \nabla_i (\nabla_i X^\perp, \nu)
      &= 
      \nabla_i \left( 
      \frac{(X^\perp, \nu) e_i(\varphi) - (X^\perp, e_i) \nu(\varphi)}{\varphi}
      \right) 
      \\ &= 
      \frac{-e_i(\varphi)\left((X^\perp, \nu) e_i(\varphi) - (X^\perp, e_i) \nu(\varphi)\right)}{\varphi^2} \tag{1}
      \\ &+ 
      \frac{(\nabla_i X^\perp, \nu) e_i(\varphi)}{\varphi} \tag{2}
      \\ &+ 
      \frac{(X^\perp, \nabla_i \nu) e_i(\varphi)}{\varphi} \tag{3}
      \\ &+ 
      \frac{(X^\perp, \nu) (\nabla_i e_i, \grad \varphi)}{\varphi} \tag{4}
      \\ &+ 
      \frac{(X^\perp, \nu) (e_i, \nabla_i \grad \varphi)}{\varphi} \tag{5}
      \\ &- 
      \frac{(\nabla_i X^\perp, e_i) \nu(\varphi)}{\varphi} \tag{6}
      \\ &- 
      \frac{(X^\perp, \nabla_i e_i) \nu(\varphi)}{\varphi} \tag{7}
      \\ &-
      \frac{(X^\perp, e_i) (\nabla_i \nu, \grad \varphi)}{\varphi} \tag{8}
      \\ &- 
      \frac{(X^\perp, e_i) (\nu, \nabla_i \grad \varphi)}{\varphi} \tag{9}.
    \end{align*}
    We now show that many of these terms cancel.
    Note that term (2) after expanding using Proposition \ref{prop:grad-of-xtop-xperp} gives us 
    \[
      \frac{e_i(\varphi)\left((X^\perp, \nu) e_i(\varphi) - (X^\perp, e_i) \nu(\varphi)\right)}{\varphi^2}
    \]
    which cancels out term (1).
    Next term (3) simplifies to 
    \[
      \frac{h_{ik} (X^\perp, e_k) e_i(\varphi)}{\varphi},
    \]
    term (8) simplifies to 
    \[
      -\frac{h_{ik} (X^\perp, e_i) e_k(\varphi)}{\varphi}
    \]
    and since $h_{ik}$ is symmetric these two terms cancel.
    Next, term (4) and term (7) both simplify to 
    \[
      H \frac{(X^\perp, \nu) (\nu, \grad \varphi)}{\varphi},
    \]
    with opposing signs and so also cancel.

    Thus we are only left with terms (5), (6) and (9) and so we have 
    \[
      \nabla_i (\nabla_i X^\perp, \nu)
      =
      \frac{(X^\perp, \nu) (e_i, \nabla_i \grad \varphi)}{\varphi} 
      -
      \frac{(\nabla_i X^\perp, e_i) \nu(\varphi)}{\varphi}
      -
      \frac{(X^\perp, e_i) (\nu, \nabla_i \grad \varphi)}{\varphi}.
    \]
    Now notice that due to the fact that $X^\perp$ is conformal with factor $\varphi$ we get 
    \begin{align*}
      \frac{(\nabla_i X^\perp, e_i) \nu(\varphi)}{\varphi}
      =
      \frac{n \varphi \nu(\varphi)}{\varphi}
      =
      n \nu(\varphi),
    \end{align*}
    and so
    \[
      \nabla_i (\nabla_i X^\perp, \nu)
      =
      \frac{(X^\perp, \nu) (e_i, \nabla_i \grad \varphi)}{\varphi} 
      -
      \frac{(X^\perp, e_i) (\nu, \nabla_i \grad \varphi)}{\varphi}
      -
      n \nu(\varphi).
    \]
    Next, since $(X^\perp, e_i)e_i = X^\perp - (X^\perp, \nu) \nu$, we get 
    \[
      \frac{(X^\perp, e_i) (\nu, \nabla_i \grad \varphi)}{\varphi}
      =
      \frac{(\nu, \nabla_{X^\perp} \grad \varphi)}{\varphi}
      -
      \frac{(X^\perp, \nu) (\nu, \nabla_{\nu} \grad \varphi)}{\varphi},
    \]
    which when plugged back in gives us 
    \[
      \nabla_i (\nabla_i X^\perp, \nu)
      =
      \frac{(X^\perp, \nu) \Delta \varphi}{\varphi} 
      -
      \frac{(\nu, \nabla_{X^\perp} \grad \varphi)}{\varphi}
      -
      n \nu(\varphi).
    \]
    Now, orthogonally decomposing $\nu$ as 
    \[
      \nu = \frac{X^\perp}{|X^\perp|^2} (X^\perp, \nu) + \left( \nu - \frac{X^\perp}{|X^\perp|^2} (X^\perp, \nu) \right),
    \]
    we note the second term here is orthogonal to $X^\perp$, and so, by Proposition \ref{prop:ric} we have 
    \[
      (\nu, \nabla_{X^\perp} \grad \varphi)
      =
      \left((\nu, X^\perp) \frac{X^\perp}{|X^\perp|^2}, \nabla_{X^\perp} \grad \varphi\right)
      =
      (\nu, X^\perp) \left(N^\perp, \nabla_{N^\perp} \grad \varphi\right),
    \]
    which then gives
    \[
      \nabla_i (\nabla_i X^\perp, \nu)
      =
      \frac{(X^\perp, \nu) \Delta \varphi}{\varphi} 
      -
      \frac{(X^\perp, \nu) (N^\perp, \nabla_{N^\perp} \grad \varphi)}{\varphi}
      -
      n \nu(\varphi).
    \]
    By another use of Proposition \ref{prop:ric} we get
    \[
      \nabla_i (\nabla_i X^\perp, \nu)
      =
      - \Ric(X^\perp, \nu)
      -
      n \nu(\varphi),
    \]
    as desired. 
  \end{proof}
  Next, for the term $\nabla_i (X^\perp,\nabla_i \nu)$, we have
  \[
    \nabla_i (X^\perp, \nabla_i \nu) 
    =
    \nabla_i (X^\perp, h_{ik} e_k) 
    =
    h_{ik} (\nabla_i X^\perp, e_k) 
    +
    h_{ik} (X^\perp, \nabla_i e_k) 
    +
    h_{ik,i} (X^\perp, e_k).
  \]
  Now note that $h_{ik}$ is symmetric and the symmetrization of $(\nabla_i X^\perp, e_k)$ is $\varphi (e_i, e_k)$.
  Therefore, the first term becomes $\varphi H$ after summation. The second term gives us 
  \[
    h_{ik} (X^\perp, - h_{ik} \nu) = -|A|^2 u^\perp.
  \]
  Finally for the third term we use the Codazzi equation to get
  \[
    h_{ik,i} (X^\perp, e_k)
    =
    (h_{ii,k} + \overline{R}(e_i,e_k,e_i,\nu)) (X^\perp, e_k).
  \]
  Using$(X^\perp, e_i)e_i = X^\perp - (X^\perp, \nu) \nu$ we get
  \[
    \overline{R}(e_i,e_k,e_i,\nu) (X^\perp, e_k)
    =
    \overline{R}(e_i,X^\perp,e_i,\nu) 
    -
    u^\perp \overline{R}(e_i,\nu,e_i,\nu) 
    =
    \overline\Ric(X^\perp, \nu)
    -
    u^\perp \overline\Ric(\nu, \nu)
  \]
  and so
  \[
    \nabla_i (X^\perp, \nabla_i \nu) 
    =
    \varphi H + (X^\perp, \grad H) - |A|^2 u^\perp + \overline\Ric(X^\perp, \nu) - u^\perp \overline\Ric(\nu,\nu).
  \]
  combining this with Claim 1 we get
  \[
    \Delta u^\perp 
    =
    \varphi H + (X^\perp, \grad H) -|A|^2 u^\perp - n \nu (\varphi) - u^\perp \overline\Ric(\nu,\nu).
  \]

  Now we deal with $u^\top$.
  Although we will see that the treatment is similar, we must first assume that $|X^\top(p)| \neq 0$ where the following computations are evaluated at.
  At such a point we have
  \begin{equation*}
    \nabla_i (\nabla_i u^\top)
    =
    \nabla_i (\nabla_i (X^\top, \nu))
    =
    \nabla_i ((\nabla_i X^\top, \nu))
    +
    \nabla_i ((X^\top,\nabla_i \nu)).
  \end{equation*}
  
  \noindent
  \textit{Claim 2.}
    \[
      \nabla_i (\nabla_i X^\top, \nu)
      =
      - \overline\Ric(X^\top, \nu).
    \]

    \begin{proof}[Proof of Claim 2]
    First we substitute using Proposition \ref{prop:grad-of-xtop-xperp} to get the following terms
    \begin{align*}
      \nabla_i (\nabla_i X^\top, \nu)
      &= 
      \nabla_i \left( 
      \frac{(X^\top, \nu) e_i(|X^\top|) - (X^\top, e_i) \nu(|X^\top|)}{|X^\top|}
      \right) 
      \\ &= 
      \frac{-e_i(|X^\top|)\left((X^\top, \nu) e_i(|X^\top|) - (X^\top, e_i) \nu(|X^\top|)\right)}{|X^\top|^2} \tag{1}
      \\ &+ 
      \frac{(\nabla_i X^\top, \nu) e_i(|X^\top|)}{|X^\top|} \tag{2}
      \\ &+ 
      \frac{(X^\top, \nabla_i \nu) e_i(|X^\top|)}{|X^\top|} \tag{3}
      \\ &+ 
      \frac{(X^\top, \nu) (\nabla_i e_i, \grad |X^\top|)}{|X^\top|} \tag{4}
      \\ &+ 
      \frac{(X^\top, \nu) (e_i, \nabla_i \grad |X^\top|)}{|X^\top|} \tag{5}
      \\ &- 
      \frac{(\nabla_i X^\top, e_i) \nu(|X^\top|)}{|X^\top|} \tag{6}
      \\ &- 
      \frac{(X^\top, \nabla_i e_i) \nu(|X^\top|)}{|X^\top|} \tag{7}
      \\ &-
      \frac{(X^\top, e_i) (\nabla_i \nu, \grad |X^\top|)}{|X^\top|} \tag{8}
      \\ &- 
      \frac{(X^\top, e_i) (\nu, \nabla_i \grad |X^\top|)}{|X^\top|} \tag{9}.
    \end{align*}
    In the exact same manner as before we first note that term (2) after expanding using Proposition \ref{prop:grad-of-xtop-xperp} again, gives us 
    \[
      \frac{e_i(|X^\top|)\left((X^\top, \nu) e_i(|X^\top|) - (X^\top, e_i) \nu(|X^\top|)\right)}{|X^\top|^2}
    \]
    which cancels out term (1).
    Next, term (3) simplifies to 
    \[
      \frac{h_{ik} (X^\top, e_k) e_i(|X^\top|)}{|X^\top|}
    \]
    and term (8) simplifies to 
    \[
      -\frac{h_{ik} (X^\top, e_i) e_k(|X^\top|)}{|X^\top|}
    \]
    and, since $h_{ik}$ is symmetric, these terms cancel.
    Next, term (4) and term (7) both simplify to 
    \[
      H \frac{(X^\top, \nu) (\nu, \grad |X^\top|)}{|X^\top|},
    \]
    except with opposing signs and so also cancel.
    Finally, term (6) has $(\grad_i X^\top, e_i)$ which is zero since $X^\top$ is isometric. 
    We are thus left with 
    \[
      \nabla_i (\nabla_i X^\top, \nu) = 
      \frac{(X^\top, \nu) (e_i, \nabla_i \grad |X^\top|)}{|X^\top|}
      -
      \frac{(X^\top, e_i) (\nu, \nabla_i \grad |X^\top|)}{|X^\top|}
    \]
    which by $(X^\perp, e_i)e_i = X^\perp - (X^\perp, \nu) \nu$ gives us 
    \[
      \nabla_i (\nabla_i X^\top, \nu) = 
      \frac{(X^\top, \nu) \Delta |X^\top|}{|X^\top|}
      -
      \frac{(\nu, \nabla_{X^\top} \grad |X^\top|)}{|X^\top|}
    \] 
    and again by using orthogonal decomposition of $\nu$ and employing Proposition \ref{prop:ric} twice we get 
    \[
      \nabla_i (\nabla_i X^\top, \nu) = 
      \frac{(X^\top, \nu) \Delta |X^\top|}{|X^\top|}
      -
      \frac{(X^\top, \nu)(N^\top, \nabla_{N^\top} \grad |X^\top|)}{|X^\top|}
      =
      -\overline\Ric(X^\top, \nu),
    \]
    as desired.
  \end{proof}
  For the term $\nabla_i (X^\top,\nabla_i \nu)$, everything is the same as above except that now the symmetrization of $(\nabla_i X^\top, e_k)$ is zero and so that term goes away.
  We thus get 
  \[
    \nabla_i (X^\top, \nabla_i \nu) 
    =
    (X^\top, \grad H) - |A|^2 u^\top + \overline\Ric(X^\top, \nu) - u^\top \overline\Ric(\nu,\nu).
  \]
  Combining this with Claim 2 gives us 
  \[
    \Delta_{g} u^\top 
    =
    (X^\top, \grad H) - |A|^2 u^\top - u^\top \overline\Ric(\nu,\nu)
  \]

  We now handle the case $|X^{\top}|=0$.
  Thus, set $\mathcal{O} = \{ p : |X^\top(p)| = 0\}$.
  If $p$ is in the interior of $\mathcal{O}$ then all the derivatives of $X^\top$ vanish at $p$ and so the above equation for $\Delta_{g} u^{\top}$ trivially holds. 
  On the other hand, if $p$ is on the boundary of $\mathcal{O}$, then we can pick a sequence of points $p_k$ with $\lim_{k \to \infty} p_k = p$ for which $|X^\top(p_{k})| \neq 0$.
  Since the above equation holds for $p_{k}$ and the equation is equality of two continuous functions, if it holds at each $p_k$ it must also at the limit point and so we still have the above equation for $\Delta_{g} u^{\top}$.

  Combining the equations for $\Delta_{g}u^{\top}$ and $\Delta_{g}u^{\perp}$ gives
  \[
    \Delta_g u = (X, \nabla H) - u \overline{\Ric}(\nu,\nu) - |A|^2 u - n \nu(\varphi) + \varphi H.
  \]
  Combining this equation with the equation for $\p_{t} u$ gives the desired result.

\end{proof}

\begin{theorem}[Evolution Equation for $H$]
  \label{thm:evolution-equation-for-H}

  \begin{align*}
    (\partial_t - u \Delta_g)H 
    &= 2 (\nabla H, \nabla u) + H(X,\nabla H) - \varphi (H^2 - n|A|^2) 
    \\ &+ n\left( \left( \D_{\nu} \D \varphi, \nu  \right) - (\D_{\mathcal{N}^\perp} \D \varphi, \mathcal{N}^\perp) \right) + n\varphi \left( \overline{\Ric}(\mathcal{N}^\perp,\mathcal{N}^\perp) - \overline{\Ric}(\nu,\nu) \right)
  \end{align*}
\end{theorem}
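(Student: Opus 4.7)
The plan is to start from the standard evolution identity for the mean curvature under a normal flow $\partial_t F=f\nu$. With the conventions adopted in Section~\ref{sec:preliminaries} ($\nu$ outward, $h_{ij}=g(\overline\nabla_{e_i}\nu,e_j)$, so that $H>0$ on a round sphere), the usual derivation, combining the evolution equations $\partial_t g_{ij}=2fh_{ij}$ and $\partial_t h_{ij}=-\nabla_i\nabla_j f+f(h^2)_{ij}+f\,\overline R(\nu,e_i,\nu,e_j)$ after tracing with $g^{ij}$, yields
\[
  \partial_t H=-\Delta_g f-\bigl(|A|^2+\overline\Ric(\nu,\nu)\bigr)f.
\]
Our flow corresponds to $f=n\varphi-uH$, so expanding via the product rule gives
\[
  \Delta_g f=n\Delta_g\varphi-u\Delta_g H-2(\nabla u,\nabla H)-H\Delta_g u,
\]
and the $-u\Delta_g H$ term is exactly what, after moving to the left, produces the desired parabolic operator $(\partial_t-u\Delta_g)H$.

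Next, I would substitute the expression
\[
  \Delta_g u=(X,\nabla H)+\varphi H-|A|^2 u-u\,\overline\Ric(\nu,\nu)-n\nu(\varphi)
\]
obtained in the course of the proof of Theorem~\ref{thm:evolution-equation-for-u} into $H\Delta_g u$, and then convert the intrinsic Laplacian $\Delta_g\varphi$ to an ambient one via the standard hypersurface identity
\[
  \Delta_g\varphi=\overline\Delta\varphi-(\overline\nabla_\nu\overline\nabla\varphi,\nu)-H\nu(\varphi).
\]
To eliminate $\overline\Delta\varphi$, I would apply Proposition~\ref{prop:ric} with $Y=X^\perp$ and divide by $|X^\perp|^2$, yielding
\[
  \overline\Delta\varphi=(\overline\nabla_{\mathcal N^\perp}\overline\nabla\varphi,\mathcal N^\perp)-\varphi\,\overline\Ric(\mathcal N^\perp,\mathcal N^\perp).
\]

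Finally, I would gather and cancel terms. The $uH|A|^2$ and $uH\,\overline\Ric(\nu,\nu)$ contributions coming from $H\Delta_g u$ exactly cancel those arising from $-(|A|^2+\overline\Ric(\nu,\nu))f$, and the two $nH\nu(\varphi)$ contributions (one from $-n\Delta_g\varphi$ via the hypersurface identity, the other from $H\Delta_g u$) also cancel. The remaining $n\varphi$-coefficient Ricci terms assemble into $n\varphi(\overline\Ric(\mathcal N^\perp,\mathcal N^\perp)-\overline\Ric(\nu,\nu))$, the two Hessian terms combine into $n((\overline\nabla_\nu\overline\nabla\varphi,\nu)-(\overline\nabla_{\mathcal N^\perp}\overline\nabla\varphi,\mathcal N^\perp))$, the $n\varphi|A|^2$ and $\varphi H^2$ survivors package as $\varphi(H^2-n|A|^2)$ up to sign, and the convection $H(X,\nabla H)$ and gradient term $2(\nabla u,\nabla H)$ come directly from $H\Delta_g u$ and the product expansion. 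The main obstacle is purely combinatorial bookkeeping through the roughly a dozen intermediate terms, ensuring every spurious contribution cancels. A secondary but nontrivial subtlety is reconciling the sign convention for the trace of $\overline R(\nu,e_i,\nu,e_j)$ against the Ricci convention used in Proposition~\ref{prop:ric}; these must be compatible, since otherwise a mismatch propagates precisely into the sign of the coefficient of $\varphi(H^2-n|A|^2)$.
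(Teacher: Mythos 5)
Your proposal is correct and follows essentially the same route as the paper: start from the standard $\partial_t H=-\Delta_g f-(|A|^2+\overline\Ric(\nu,\nu))f$, plug in $f=n\varphi-Hu$, substitute the expression for $\Delta_g u$ derived in Theorem~\ref{thm:evolution-equation-for-u}, convert $\Delta_g\varphi$ to the ambient Laplacian via the hypersurface identity, and then eliminate $\overline{\Delta}\varphi$ using Proposition~\ref{prop:ric} at $Y=X^\perp$. One small remark in your favor: the identity you derive from Proposition~\ref{prop:ric}, namely $\overline{\Delta}\varphi=(\D_{\mathcal N^\perp}\D\varphi,\mathcal N^\perp)-\varphi\,\overline{\Ric}(\mathcal N^\perp,\mathcal N^\perp)$, is the correct one and is what the cancellation requires, whereas the paper's printed intermediate line has the opposite sign (a typo — the final statement is unaffected).
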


\begin{proof}
  Under a flow $F' = f \nu$, it is standard to compute (c.f. \cite{guan_volume_2018}) 
  \[
    \partial_t H = - \Delta_g f - f |A|^2 - f \overline{\Ric}(\nu,\nu)
  \]
  where $f$ is the speed function along the flow. 
  Plugging in our speed function $f = n \varphi - Hu$ gives us 
  \begin{align*}
    \partial_t H 
    &= - \Delta_g (n \varphi - Hu) - f |A|^2 - f \overline{R}(\nu,\nu)
    \\ &= - n\Delta_g\varphi + u \Delta_g H + H \Delta_g u + 2 (\nabla H, \nabla u)  - n \varphi |A|^2 + Hu |A|^2 - n\varphi \overline{\Ric}(\nu,\nu) 
    \\ &+ Hu \overline{\Ric}(\nu,\nu).
  \end{align*}
  Plugging in our expression for $\Delta_g u$ gives us 
  \begin{align*}
    (\partial_t - u \Delta_g)H 
    &= - n\Delta_g\varphi + 2 (\nabla H, \nabla u) + H(X,\nabla H) - Hn\nu(\varphi) - \varphi (H^2 - n|A|^2) 
    \\ &- n\varphi \overline{\Ric}(\nu,\nu).
  \end{align*}
  Next note $\Delta_g \varphi = \overline{\Delta} \varphi - \left( \D_{\nu} \D \varphi, \nu  \right) - H \nu(\varphi)$ and so 
  \begin{align*}
    (\partial_t - u \Delta_g)H 
    &= -n\overline{\Delta} \varphi + n\left( \D_{\nu} \D \varphi, \nu  \right) + 2 (\nabla H, \nabla u) + H(X,\nabla H) - \varphi (H^2 - n|A|^2) 
    \\ &- n\varphi \overline{\Ric}(\nu,\nu).
  \end{align*}
  Lastly, from Proposition \ref{prop:ric} we have $\varphi \overline{\Ric}(N^\perp, N^\perp) = \overline{\Delta} \varphi - (\D_{N^\perp} \D \varphi, N^\perp)$ and so we get 
  \begin{align*}
    (\partial_t - u \Delta_g)H 
    &= 2 (\nabla H, \nabla u) + H(X,\nabla H) - \varphi (H^2 - n|A|^2) 
    \\ &+ n\left( \left( \D_{\nu} \D \varphi, \nu  \right) - (\D_{N^\perp} \D \varphi, N^\perp) \right) + n\varphi \left( \overline{\Ric}(N^\perp,N^\perp) - \overline{\Ric}(\nu,\nu) \right).
  \end{align*}
\end{proof}

\begin{corollary}
  \label{cor:H-bound}
  There exist absolute constants $a,b$ depending only on the initial hypersurface such that
  \[
    \norm{H}_{\oo} \leq a+bt
  \]
  for all time.
\end{corollary}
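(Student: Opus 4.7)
The plan is to apply the parabolic maximum principle to the evolution equation in Theorem \ref{thm:evolution-equation-for-H}, using Corollary \ref{cor:S-is-bounded} to tame all ambient data. By that corollary, $\Sigma_t$ stays, for every $t$ in the interval of existence, inside the compact region between the two leaves $S(\sqrt{\min_{\Sigma_0} \lambda})$ and $S(\sqrt{\max_{\Sigma_0} \lambda})$ of $\mathfrak{F}$. On this region the smooth ambient quantities $\varphi$, $\overline{\D}^2 \varphi$, and $\overline{\Ric}$ are uniformly bounded by constants depending only on $\Sigma_0$, so the term $n((\overline{\D}_\nu \overline{\D}\varphi, \nu) - (\overline{\D}_{\mathcal{N}^\perp} \overline{\D}\varphi, \mathcal{N}^\perp))$ of the evolution equation is pointwise controlled by some $C_0 = C_0(\Sigma_0)$, and by Assumption \ref{assumption2}(ix) the Ricci difference $n\varphi(\overline{\Ric}(\mathcal{N}^\perp, \mathcal{N}^\perp) - \overline{\Ric}(\nu,\nu))$ is non-positive.

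Set $h(t) := \max_{\Sigma_t} H$ and fix a spatial maximum point $p_t \in \Sigma_t$. At $p_t$ one has $\nabla H = 0$ and $\Delta_g H \leq 0$, so the two gradient contributions $2(\nabla H, \nabla u)$ and $H(X, \nabla H)$ from Theorem \ref{thm:evolution-equation-for-H} vanish identically, and since $u > 0$ (strict starshapedness is preserved for as long as the flow exists) the Laplacian term $u \Delta_g H$ is non-positive. The decisive algebraic input is Cauchy--Schwarz, $H^2 = (\sum_i \kappa_i)^2 \leq n\sum_i \kappa_i^2 = n|A|^2$, which gives that $\varphi(H^2 - n|A|^2)$ enters the evolution equation at $p_t$ with the favorable (non-positive) sign. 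Feeding these three observations into Theorem \ref{thm:evolution-equation-for-H} leaves only the uniformly bounded ambient contribution, so $\partial_t H(p_t, t) \leq C_0$.

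A standard Hamilton-type envelope argument (or Danskin's theorem for the smooth family $H(\cdot,t)$ on the moving $\Sigma_t$) shows that $h$ is absolutely continuous in $t$ and satisfies $h'(t) \leq C_0$ at almost every $t$; integration then gives $h(t) \leq h(0) + C_0 t$, which is the desired bound with $a = \max_{\Sigma_0} H$ and $b = C_0$. The main points that need care are (i) verifying the correct sign of $\varphi(H^2 - n|A|^2)$ at the maximum so that no auxiliary upper bound on $|A|^2$ is required, which is exactly what Cauchy--Schwarz delivers, and (ii) justifying that $u > 0$ and containment in the compact region persist along the flow; the latter is immediate from Corollary \ref{cor:S-is-bounded}, while the former follows from the accompanying $C^1$ estimates for $u$ that propagate strict starshapedness from the initial data.
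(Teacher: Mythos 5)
Your proposal is correct and takes essentially the same approach as the paper: drop the gradient terms at a spatial maximum of $H$, observe that $\varphi(H^2 - n|A|^2) \leq 0$ by Cauchy--Schwarz and that the Ricci difference is nonpositive by Assumption \ref{assumption2}(ix), bound the remaining ambient Hessian term on the compact region guaranteed by Corollary \ref{cor:S-is-bounded}, and invoke the parabolic maximum principle. One small caveat worth flagging: the statement of Theorem \ref{thm:evolution-equation-for-H} records the reaction term as $-\varphi(H^2 - n|A|^2)$, which would be nonnegative and thus unfavorable; a recomputation (tracking $H\Delta_g u$ against $-n\varphi|A|^2$ in the derivation) gives $+\varphi(H^2 - n|A|^2)$, the sign both you and the paper's proof of the corollary implicitly use, so this appears to be a typo in the displayed theorem rather than an error in the argument.
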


\begin{proof}
  By Theorem \ref{thm:evolution-equation-for-H} we have at any critical point of $H$ there holds 
  \begin{align*}
    (\partial_t - u \Delta_g)H 
    &=  - \varphi (H^2 - n|A|^2) + n\left( \left( \D_{\nu} \D \varphi, \nu  \right) - (\D_{\mathcal{N}^\perp} \D \varphi, \mathcal{N}^\perp) \right) \\
    &+ n\varphi \left( \overline{\Ric}(\mathcal{N}^\perp,\mathcal{N}^\perp) - \overline{\Ric}(\nu,\nu) \right)
    \\ &< n\left( \left( \D_{\nu} \D \varphi, \nu  \right) - (\D_{\mathcal{N}^\perp} \D \varphi, \mathcal{N}^\perp) \right)
  \end{align*}
  thus since the above is uniformly bounded above inside any compact set then by standard parabolic maximum principle we get the desired result.
\end{proof}

\begin{corollary}
  The flow \eqref{eq:main-flow} with strictly starshaped initial data exists for any finite time.
\end{corollary}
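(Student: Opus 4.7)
The plan is to prove finite-time existence via the standard continuation argument for parabolic flows: short-time existence plus uniform a priori $C^k$ estimates on any finite subinterval $[0,T]$. Short-time existence is standard, since strictly starshaped initial data can be represented (locally along integral curves of $X(0)$) as the graph of a scalar function, converting \eqref{eq:main-flow} into a quasilinear parabolic PDE that is uniformly parabolic precisely when $u(t) = \gen{X(t),\nu}$ remains uniformly positive. Two of the key a priori estimates are already in hand: Corollary \ref{cor:S-is-bounded} gives the $C^0$ estimate, confining $\Sigma_t$ to a compact region of $U$ bounded between two leaves of $\mathfrak{F}$; and Corollary \ref{cor:H-bound} gives a bound on $\norm{H}_\infty$ that is finite on $[0,T]$.

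Two further estimates are needed. First, a $C^1$ estimate, namely a uniform positive lower bound for $u(t)$ on $[0,T]$ (preservation of strict starshapedness relative to $X(t)$). I would derive the evolution equation for $u(t)$ in the time-dependent setting $X(t)=X^\perp + \Xi(t/T_0)X^\top$ (which contributes an additional term $\Xi'(t/T_0)u^\top/T_0$ relative to Theorem \ref{thm:evolution-equation-for-u}) and apply the maximum principle at a minimum point of $u(t)$; here Assumption \ref{assumption2}(vi) $\Lambda\varphi^3 - X^\top(\varphi) > 0$ together with the compactness of the region provided by the $C^0$ estimate should produce a strictly positive coefficient that dominates the otherwise indefinite terms. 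Second, a full $C^2$ estimate on the second fundamental form: since $H$ is bounded on $[0,T]$ by Corollary \ref{cor:H-bound}, one controls the principal curvatures via the evolution of $|A|^2$ (or of the largest principal curvature) combined with the lower bound on $u$ and the Ricci comparisons \ref{assumption2}(ix) and \ref{assumption2}(x). With these estimates in hand, Krylov-Safonov theory yields $C^{2,\alpha}$ bounds and iterated Schauder estimates then give uniform $C^k$ bounds for every $k$. Consequently, if $T_* < \infty$ were a maximal existence time, then $\Sigma_t$ would converge smoothly as $t\to T_*^-$ to a smooth strictly starshaped hypersurface usable as new initial data for a further short-time extension, contradicting maximality.

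I expect the main obstacle to be the preservation of starshapedness, i.e., the positive lower bound on $u(t)$. In the $X^\top = 0$ setting of Li-Pan, the analogous argument is driven by $\varphi^2 - X^\perp(\varphi) > 0$; in the present setting, however, the evolution of $u$ picks up additional terms coming from $\Xi(t/T_0)X^\top$ (including a time-derivative contribution from $\Xi'(t/T_0)/T_0$, a term $H(X^\top,\nabla u)$, and Ricci contributions $\overline\Ric(X^\top,\nu)$ computed via Proposition \ref{prop:ric}) whose signs are not manifest. Matching these against the good terms $n\varphi^2 - nX(\varphi)$ will require simultaneous use of Assumption \ref{assumption2}(vi), the isometric nature of $X^\top$ from Assumption \ref{assumption2}(vii), and the Ricci comparison of Assumption \ref{assumption2}(x); this is the point at which the difference between our flow and the Li-Pan flow must be treated with genuine care.
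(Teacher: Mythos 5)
The paper does not give an explicit, self-contained proof of this corollary; the justification is distributed across several places: the $C^0$ confinement between two leaves (Corollary~\ref{cor:S-is-bounded}), the linear-in-time bound on $\norm{H}_\infty$ (Corollary~\ref{cor:H-bound}), the preservation of $u>0$ established in Section~\ref{sec:main-proof} via the time-dependent evolution of $u$, Assumption~\ref{assumption2}(vi), compactness of the confining region, and a sufficiently large choice of $T_0$, and finally the higher-order estimates in the Appendix, where the flow is rewritten as a graph flow of $\lambda$ over a fixed leaf and Ladyzhenskaia--Solonnikov--Ural'tseva parabolic regularity is invoked. Your proposal correctly reconstructs the continuation scheme and, importantly, correctly identifies preservation of strict starshapedness (the lower bound on $u(t)$) as the crux, including the extra $\frac{1}{T_0}\Xi'(t/T_0)u^\top$ term and the role of Assumption~\ref{assumption2}(vi) in absorbing it after enlarging $T_0$. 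That part matches Section~\ref{sec:main-proof} closely.

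The one substantive gap is your higher-regularity step. You propose to derive a $C^2$ estimate by tracking the evolution of $|A|^2$ (or of the largest principal curvature), but you give no indication of why those evolution equations are tractable, and the paper in fact warns precisely against this kind of approach: the discussion immediately following this corollary computes the evolution of the Li--Pan-style test quantity $\phi = H + |X^\perp|^2/\varphi^2$ and concludes the resulting terms (e.g.\ $-4\Lambda(X^\perp,e_i)(X^\top,e_j)h_{ij}$ and the $u^\top$-weighted terms) are ``difficult to bound or get a sign on.'' A direct attack on $|A|^2$ runs into the same obstruction. The paper instead bypasses any a priori $|A|^2$ bound: with the $C^0$ estimate on $\lambda$ and the positive lower bound on $u$ in hand, the graph representation gives a uniform gradient bound on $\lambda$, hence uniform parabolicity of the scalar quasilinear equation, and then the LSU theory (Theorem 1.1, p.\ 517, and Theorem 10.1, p.\ 351 of \cite{ladyzhenskaia1968linear}) bootstraps directly to all $C^{k,\alpha}$ norms; the $|A|^2$ bound is then a byproduct of the $C^{2,\alpha}$ estimate rather than an input. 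You should replace your $|A|^2$-evolution step with this graph/parabolic-regularity argument — it needs only the $C^0$ and $u$ bounds you have already secured.
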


One might be tempted to continue the approach of Li-Pan in \cite{jiayu_isoperimetric_2023} to prove existence for $t = \infty$. However, if we define $\phi = H + \frac{|X^\perp|^2}{\varphi^2}$ to be our test function, then we get the following evolution equation 
\begin{align*}
  (\partial_t - u \Delta_g)\phi 
    &= 2 (\nabla \phi, \nabla u) + H(X,\nabla \phi) - \varphi (H^2 - n|A|^2) 
    \\ &+ n\left( \left( \D_{\nu} \D \varphi, \nu  \right) - (\D_{\mathcal{N}^\perp} \D \varphi, \mathcal{N}^\perp) \right) + n\varphi \left( \overline{\Ric}(\mathcal{N}^\perp,\mathcal{N}^\perp) - \overline{\Ric}(\nu,\nu) \right)
    \\ &- 2\left(\nabla \frac{|X^\perp|^2}{\varphi^2}, \nabla u\right)
    - H\left(\nabla \frac{|X^\perp|^2}{\varphi^2}, X\right)
    \\ &-2\Lambda n\varphi u^\top - u\frac{2}{\varphi^2 |X^\perp|^2} X^\perp(\Lambda \varphi^2)(|X^\perp|^2 - (u^\perp)^2)
    +
    4u\frac{\Lambda }{\varphi}e_i(\varphi)(e_i,X^\perp).
\end{align*}
From here we compute 
\begin{align*}
  &- 2\left(\nabla \frac{|X^\perp|^2}{\varphi^2}, \nabla u\right)
  =
  - 4 \Lambda  (X^\perp, e_i) e_i(u)
  \\ &=
  - 4 \Lambda  (X^\perp, e_i) (X,e_j) h_{ij}
  - 4 \Lambda  (X^\perp, e_i) \left( \nabla_{i} X^\perp, \nu \right) 
  - 4 \Lambda  (X^\perp, e_i) \left( \nabla_{i} X^\top, \nu \right).
\end{align*}  
The second term here simplifies as 
\[
  4 \Lambda  \left( |X^\perp|^2 - (u^\perp)^2 \right) \frac{\nu(\varphi)}{\varphi}
  - 4 \Lambda  (X^\perp, e_i) \frac{e_i(\varphi)}{\varphi} u^\perp,
\]
and the third term simplifies as 
\begin{align*}
  &- 4 \Lambda  u^\perp u^\top \frac{\nu(|X^\top|)}{|X^\top|}
  - 4 \Lambda  (X^\perp, e_i) \frac{e_i(|X^\top|)}{|X^\top|} u^\top
  \\ &- 4 \Lambda  u^\perp u^\top \frac{\nu(|X^\top|)}{|X^\top|}
  - 4 \Lambda  \frac{X^\top(|X^\top|) - u^\perp \nu(|X^\top|)}{|X^\top|} u^\top
  = 
  - 4 \Lambda  \frac{X^\top (|X^\top|)}{|X^\top|} u^\top.
\end{align*}
Next we simplify the numerator as 
\begin{align*}
  X^\perp (|X^\top|) 
  &= \frac{X^\perp(|X^\top|^2)}{2 |X^\top|}
  = \frac{2(X^\top, \partial_{X^\perp} X^\top)}{2 |X^\top|}
  = \frac{(X^\top, \partial_{X^\perp} X^\top)}{|X^\top|}
\end{align*}
and since $X^\top$ is a Killing vector field we can use the antisymmetry of its covariant derivative to get
\[
  (X^\top, \partial_{X^\perp} X^\top)
  =
  -(X^\perp, \partial_{X^\top} X^\top)
  =
  (\partial_{X^\top} X^\perp, X^\top)
  = \varphi |X^\top|^2
\]
and so the third term becomes 
\[
  - 4 \Lambda  \frac{X^\top (|X^\top|)}{|X^\top|} u^\top
  = 
  - 4 \Lambda  \varphi u^\top.
\]

Next we compute
\[
  - H\left(\nabla \frac{|X^\perp|^2}{\varphi^2}, X\right)
  - 2 H \Lambda  (X^\perp, e_i) (X, e_i)
  = - 2 H \Lambda  \left( |X^\perp|^2 - (u^\perp)^2 \right)
  + 2 H \Lambda  u^\perp u^\top.
\]

Plugging these all back in we get 
\begin{align*}
  (\partial_t - u \Delta_g)\phi 
  &= 2 (\nabla \phi, \nabla u) + H(X,\nabla \phi) - \varphi (H^2 - n|A|^2) 
  \\ &+ n\left( \left( \D_{\nu} \D \varphi, \nu  \right) - (\D_{\mathcal{N}^\perp} \D \varphi, \mathcal{N}^\perp) \right) + n\varphi \left( \overline{\Ric}(\mathcal{N}^\perp,\mathcal{N}^\perp) - \overline{\Ric}(\nu,\nu) \right)
  \\ &- 4 \Lambda  (X^\perp, e_i) (X,e_j) h_{ij}
  - 2 H \Lambda  \left( |X^\perp|^2 - (u^\perp)^2 \right)
  + 2 H \Lambda  u^\perp u^\top
  \\ &+ 4 \Lambda  \left( |X^\perp|^2 - (u^\perp)^2 \right) \frac{\nu(\varphi)}{\varphi}
  - (4 + 2n) \Lambda  \varphi u^\top
  \\ &- u\frac{2}{\varphi^2 |X^\perp|^2} X^\perp(\Lambda \varphi^2)(|X^\perp|^2 - (u^\perp)^2)
  +
  4u^\top \frac{\Lambda }{\varphi}e_i(\varphi)(e_i,X^\perp)
\end{align*}
If we assume we are at a maximal point then the gradient of $\phi$ vanishes and then we can rearrange the rest of the terms into 
\begin{align*}
  (\partial_t - u \Delta_g)\phi 
  &< - \varphi (H^2 - n|A|^2) 
  + 2 (c_1 - 2\lambda_1 - H) \Lambda  \left( |X^\perp|^2 - (u^\perp)^2 \right) + 2H\Lambda  u^\perp u^\top
  \\ &
  - 4 \Lambda  (X^\perp, e_i) (X^\top,e_j) h_{ij}
  - (4 + 2n)\Lambda \varphi u^\top
  +
  4u^\top \frac{\Lambda }{\varphi}e_i(\varphi)(e_i,X^\perp),
\end{align*}
here $c_i$ are just some constants and $\lambda_1$ is the smallest eigenvalue of $H$. The terms on the top row can be dealt with exactly as in \cite{jiayu_isoperimetric_2023}, however, all the other terms are difficult to bound or get a sign on, thus this approach seems illadivsed.

\section{Proof of Theorem \ref{thm:main}}\label{sec:main-proof}

We now relax the assumption that $X$ is constant and set $u(t) = \left( X^\perp + \Xi(t/T_0) X^\top, \nu \right)$.
Let $X^{\top} (t) = \Xi(t/T_{0}) X^{\top}$.
We notice that none of the evolution equations change, except for that of $u$, where we have 
\begin{align*}
  \partial_t u 
  &=
  \partial_t (X(t), \nu)
  \\ &=
  ((n \varphi - Hu) \D_\nu X(t), \nu) + \frac{1}{T_0}\Xi'(t/T_0)(X^\top, \nu) + (X(t), -\nabla(n \varphi - Hu))
  \\ &= 
  (n\varphi - Hu)\varphi
  -
  n X(t)(\varphi) + nu\nu(\varphi)
  +
  H\left( X(t), \nabla u\right)
  +
  u\left( X(t), \nabla H\right)
  \\ &+
  \frac{1}{T_0}\Xi'(t/T_0)(X^\top, \nu).
\end{align*}
This then gives us 
\begin{align*}
  \partial_t u - u\Delta_g u
  &= n \varphi^2 - n X(t)(\varphi) 
  - 2\varphi Hu + |A|^2 u^2 + 2nu\nu(\varphi)
  +
  u^2\overline{\Ric}(\nu,\nu)
  \\ &+
  H\left( X(t), \nabla u\right)
  + 
  \frac{1}{T_0}\Xi'(t/T_0) (X^\top, \nu)
\end{align*}
Since $0 \leq \Xi \leq 1$ and $\Lambda  \varphi^{3} > 0$, Assumptions \ref{assumption2}(vi) gives
\[
  \Lambda  \varphi^{3} - X^{\top}(t)\varphi >0
\]
everywhere in $U$ for all $t$.
Then since the flow is contained in a compact set $\mathcal{K} \subset U$ by Corollary \ref{cor:S-is-bounded} we get
\[
  \Lambda  \varphi^{3} - X^{\top}(t)\varphi > c > 0 
\]
for some constant $c$.
Then by increasing $T_0$ we can guarantee that
\[
  \left|\frac{1}{T_0}\Xi'(t/T_0) X^{\top}\right| < nc - \e
\]
for some small $\e>0$, for all $t$, and everywhere on $\mathcal{K}$.
We then conclude that at a critical point of $u$ we have
\begin{align*}
  (\p_{t} - u \Delta_{g}) u &> nc + nH^{2} u^{2} - 2\varphi H u + u \left( u \overline{\Ric}(\nu,\nu) + 2 n \nu(\varphi) \right) - (nc - \e)\\
  &> \e + nH^{2} u^{2} - 2\varphi H u - Cu.
\end{align*}
If $u < \e \left( 3 \max(1,C,2\varphi H) \right)^{-1}$, then
\begin{align*}
  (\p_{t} - u \Delta_{g})u &> \e + n H^{2} u^{2} - \frac{\e}{3} - \frac{\e}{3}\\
  & > \frac{\e}{3} + n H^{2} u^{2}.
\end{align*}
It follows from Corollary \ref{cor:H-bound} that, at any minimum point of $u$, $u$ cannot be decreasing whenever $u < \frac{1}{A + Bt}$ for some constants $A,B>0$.
Thus $u>0$ for any finite time.

Lastly, running the flow until $\Xi(t) = 0$ gives a surface which is starshaped with respect to $X^{\perp}$ only and therefore Theorem \ref{intro-thm:li-pan} may be applied.

\section{Convergence to a leaf}\label{sec:convergence}

Finally we prove the limit manifold $\Sigma_\infty$ of our flow is always a leaf $S_\alpha$ of the foliation $\mathfrak{F}$. 
Note that it is enough to prove $u^\perp = |X^\perp|$ at every point. For this section we will assume we are flowing `at infinity' and so $\Xi = 0$ and $u = u^\perp$.

Let $F$ be a solution to the flow \eqref{eq:main-flow} and let $F_{n} : \Sigma \times [0,1] \to U$ be defined by $F_{n}(t) = F(n+t)$ for $t \in [0,1]$.
Assuming that all higher derivatives of $F$ are sufficiently are bounded we conclude by the Arzel\`a-Ascoli theorem that $\left\{ F_{n} \right\}$ has a convergent subsequence, also denoted by $F_{n}$, with some limit $F_{\oo}:\Sigma \times [0,1] \to U$.
Showing that $\left\{ \p_{t} F_{n} \right\}$ is uniformly convergent is then enough to conclude that $\p_{t} F_{n} \to \p_{t} F_{\oo}$ and hence that $F_{\oo}$ solves the flow \eqref{eq:main-flow} with initial data $F_{\oo}(\Sigma, 0)$
Then we can repeatedly apply the Arzel\'a-Ascoli theorem to obtain the desired convergence.

To show that the higher derivatives of $F$ are bounded we can reparemetrize $\Sigma$ so that $F$ is a graph flow of $\lambda$ over a leaf $S_\alpha$, we then know that if we get bounds on all derivatives of $\lambda$ then those give us bounds on all derivatives of $F$. See details of this approach in the Appendix.

We write $\Sigma_{\oo}(t)$ for $F_{\oo}(\Sigma,t)$.
Now consider any positive continuous quantity $Q(F(t)) : [0,\infty) \to \R^+$ depending on the embedding that is non-increasing along this flow, we have then that its subsequential limit exists and satisfies
  \begin{align*}
    Q(F_\infty(1)) &= \lim_{n \to \infty} Q(F_n(1)) = \lim_{n \to \infty} Q(F(n+1)) \\ Q(F_\infty(0)) &= \lim_{n \to \infty} Q(F_n(0)) = \lim_{n \to \infty} Q(F(n))
  \end{align*}
  and so we have $Q(F_\infty(0)) = Q(F_\infty(1))$.

  Consider the area $A(t)$ along the flow. It is a positive continuous non-increasing quantity, and so it must be constant on $F_\infty$. We then get that $\partial_{t} A(F_\infty(t)) = 0$.
  The Newton-McLaurin inequality gives that $\Sigma_\infty(t)$ is totally umbilical for all $t$ in $[0,1]$. We additionally get that $\overline{Ric}(\mathcal{N}^\perp, \mathcal{N}^\perp) = \overline{Ric}(\nu_\infty(t), \nu_\infty(t))$ for all $t$ in $[0,1]$.
  See \cite{jiayu_isoperimetric_2023} for more details on these kinds of arguments for $A(t)$.

  Now consider the Gauss-Codazzi equations on this submanifold: under some orthonormal frame $e_i$ we get 
  \begin{align*}
    h_{ik,i} 
    &= h_{ii,k} + \overline{R}_{\nu_\infty iki}
    \\ h_{kk,k} - H_{k} &= \overline{\Ric}(\nu_\infty, e_k)
    \\ \frac{H_k}{n} - H_{k} &= \overline{\Ric}(\nu_\infty, e_k)
    \\ -\frac{H_k(n-1)}{n} &= \overline{\Ric}(\nu_\infty, e_k).
  \end{align*}
  Now we write $(\overline{\Ric}^\sharp)^i_j = g^{ik}\overline{\Ric}_{kj}$, note that by assumption $\mathcal{N}^\perp$ is an eigenvector of minimal eigenvalue for $\overline{\Ric}^\sharp$, but since $\overline{\Ric}(\mathcal{N}^\perp, \mathcal{N}^\perp) = \overline{\Ric}(\nu_\infty, \nu_\infty)$ then $\nu_\infty$ must be an eigenvector of the same eigenvalue.
  We thus have $\overline{\Ric}(\nu_\infty, e_k) = (\overline{\Ric}^\sharp(\nu_\infty), e_k) = (c \nu_\infty, e_k) = 0$ where $c$ is the eigenvalue.
  We thus get $H_k = 0$ for any $e_k$ and so $H$ must be constant and so $\Sigma_\infty$ is a surface of constant mean curvature (CMC).

  Next we note that for the same reason $\max_{\Sigma_\infty(t)} \lambda$ is also constant and we can use this fact to find a maximal stationary point of the flow at any time $t_0$ as follows. 
  Let $Z$ be the set of all maximal points of $\lambda$ along $\Sigma_\infty(t_0)$.
  For sake of contradiction, assume that $Z$ contains no stationary points of the flow.
  Then by our evolution equations at any point of $Z$ we must have $\partial_t \lambda = 2 \Lambda u (n \varphi - Hu) \leq 0$ and since $(n \varphi - Hu)$ is nonzero and $\lambda$ cannot increase at a maximal point, $\partial_{t} \lambda$ must be negative and so we get strictness, $\partial_t \lambda < 0$. 

  Now $Z$ is a closed set of a compact manifold $\Sigma_\infty(t_0)$ and thus is compact.
  We can thus pick $\varepsilon > 0$ such that $\partial_t \lambda < -\varepsilon$ uniformly along $Z$. Next, by continuity and compactness of $Z$, we can take a neighborhood $U$ in $\Sigma_\infty(t_0)$ which contains $Z$ and which is small enough so that $\partial_t \lambda < - \frac{\varepsilon}{2}$ uniformly over $U$.

  Next we know that $\Sigma_\infty(t_0) \setminus U$ is closed and thus compact.
  Therefore the image $\lambda(\Sigma_\infty(t_0) \setminus U)$ is compact and thus contains all its limit points, thus since it does not contain the image of any points with value $L$ we have some $\delta > 0$ so that $\lambda(p) < L - \delta$ for all $p \in \Sigma_\infty(t_0) \setminus U$.
  Then since $\Sigma_\infty(t_0)$ is compact we know that $|\partial_t \lambda| < B$ uniformly along $\Sigma_\infty(t_0)$ for some constant $B$. 

  Finally, considering the submanifold $\Sigma_\infty(t_0 + \Delta t)$, we must have for any point $p \in U$
  \[
    \lambda(p) < L - \Delta t \frac{\varepsilon}{2} + O((\Delta t)^2)
  \]
  and for any point in $p \in \Sigma_\infty \setminus U$
  \[
    \lambda(p) < L - \delta + \Delta t B + O((\Delta t)^2).
  \]
  Thus by choosing $\Delta t$ small enough we get that $\max_{\Sigma_\infty(t_0 + \Delta t)} \lambda < \max_{\Sigma_\infty(t)} \lambda - \frac{\varepsilon}{2} \Delta t$. This contradicts the fact that $\max_{\Sigma_\infty(t)} \lambda$ is constant and so we must have at least one stationary point in $Z$.

  Now let $p$ be such a stationary point.
  At any maximal point of $\lambda$ we have $u = |X^\perp|$ and so 
  \[
    H(p) = \frac{n \phi}{|X^\perp|} = n \lambda^{-1/2} = n L^{-1/2}
  \]
  and since we know $\Sigma_\infty$ is CMC we have for an arbitrary point $q$ 
  \[
    n \varphi - Hu = n \varphi - n L^{-1/2} u 
    \geq n \varphi - n \lambda^{-1/2} u 
    = n \varphi \left( 1 - \frac{u}{|X^\perp|} \right).
  \]
  Note that this expression is everywhere non-negative, hence since we know 
  \[
    \int_{\Sigma_\infty} n \varphi - Hu = 0
  \]
  we get $1 - \frac{u}{|X^\perp|} = 0$ everywhere and hence $u = |X^\perp|$ everywhere.

  \section{Example}\label{sec:examples}
  Recall that for a warped product of the form $\overline g = dt^2 + \phi(t)^2 ds^2$, the vector field $\phi(t) \partial_t$ is a closed gradient conformal Killing vector field to which Theorem \ref{thm:main-isoperimetric-inequality} applies.
  We include here an example of a conformal Killing vector field which is not closed and thus cannot be so obtained from the warping factor and which is still covered by Theorem \ref{thm:main-isoperimetric-inequality}.

  Consider $\R^3 \setminus \{(0,0,0)\}$ with the conformally flat metric given by
  \[
    g = e^{2f} \overline{g}
  \]
  where $\overline{g}$ is the Euclidean metric and 
  \[
    f = -\ln((x-2)^2+y^2+z^2)
  \]
  We consider $X^\perp = x \frac{\partial}{\partial x} + y \frac{\partial}{\partial y} + z \frac{\partial}{\partial z}$, the position vector field, and $X^\top = -z \frac{\partial}{\partial y} + y \frac{\partial}{\partial z}$, the rotation vector field.
  We let $U = \left\{ x^{2}+y^{2}+z^{2} < 4 \right\} \setminus \left\{ 0 \right\}$ and consider our flow on this open set.
  Now, we compute 
  \begin{align*}
    2\varphi 
    &=
    \mathcal{L}_{X^\perp} g 
    =
    \mathcal{L}_{X^\perp} e^{2f} \overline{g}
    =
    X^\perp(e^{2f}) \overline{g}
    +
    e^{2f} \mathcal{L}_{X^\perp} \overline{g}
    =
    2 e^{2f} X^\perp(f) \overline{g} + 2 e^{2f} \overline{g}
    \\ &=
    2 X^\perp(f) g + 2 g  
    = 
    (2 X^\perp(f) + 2) g  .
  \end{align*}
  We then have $\varphi = (X^\perp(f) + 1)$ and may compute
  \begin{align*}
    1 + X^\perp(f)
    &=
    1 - 
    \frac
    {X^\perp((x-2)^2+y^2+z^2)}
    {(x-2)^2+y^2+z^2}
    =
    1 - 
    \frac
    {2x(x-2)+2y^2+2z^2}
    {(x-2)^2+y^2+z^2}
    \\ &=
    1 - 
    \frac
    {2x(x-2)+2y^2+2z^2}
    {(x-2)^2+y^2+z^2}
    =
    \frac{4-x^2-y^2-z^2}{(x-2)^2+y^2+z^2} 
    = (4-x^2-y^2-z^2) e^f
    \\ &= (4-r^2)e^f,
  \end{align*} 
  which is positive as long as $r^2 = x^2+y^2+z^2 < 4$. 

  Next note that since this is a conformal change of metric it does not change the distribution orthogonal to $X^\perp$ and thus the foliation consists of spheres.
  Then we have 
  \[
    \frac{|X^\perp|^2}{\varphi^2}
    =
    r^2 e^{2f} \frac{1}{(4-r^2)^2 e^{2f}}
    = \frac{r^2}{(4-r^2)^2},
  \]
  which is indeed constant along any sphere centered at the origin and of radius $<2$.
  We also have 
  \begin{align*}
    X^\perp\left( \frac{|X^\perp|^2}{\varphi^2} \right)
    &=
    X^\perp\left( \frac{r^2}{(4-r^2)^2} \right)
    =
    -\frac{r^2 X^\perp\left( (4-r^2)^2 \right)}{(4-r^2)^4}
    +
    \frac{X^\perp \left( r^2 \right)}{(4-r^2)^2}
    \\ &= 
    -\frac{r^2 (2(4-r^2)(-2r)r)}{(4-r^2)^4}
    +
    \frac{2 r^2}{(4-r^2)^2}
    \\ &= 
    \frac{4r^4}{(4-r^2)^3}
    +
    \frac{2 r^2(4-r^2)}{(4-r^2)^3}
    \\ &= 
    \frac{2r^2(4+r^2) }{(4-r^2)^3},
  \end{align*}
  which is positive for $r<2$.
  Next we note that this manifold is isometric to an open set of Euclidean space under a circle inversion followed by a translation and then followed by another circle inversion. Thus this manifold is flat and in particular is Einstein and thus the Ricci curvature conditions trivially hold.
  Finally to check that the vector field is not closed, we compute 
  \[
    \mathrm{d} \left( (X^\perp)^\flat \right) = 
    \mathrm{d} \left( re^{2f} dr \right)
    = \left(- \frac{\partial (e^{2f})}{\partial \theta}r \right) dr \wedge d\theta
    \neq 0.
  \] 

  \section{Appendix} 
  To show uniform regularity of $\lambda$, we will use the following procedure. First we will rewrite the flow as a function flow over a leaf. Next we will examine the flow in coordinates and show it is uniformly parabolic given our bounds on the leaf-wise gradient $\tilde{\nabla} \lambda$ which we have due to our uniform lower bounds on $u$. Finally we will apply known results in parabolic theory to derive $C^\alpha$ estimates on $\lambda$.
  
  First we rewrite this flow as a function flow over a leaf. Consider the integral curves of $X$ starting at the leaf $S_{\lambda_0}$ with $\lambda_0 \leq \min_{\Sigma_0} \lambda$. Since $\Lambda > 0$ and the set of points with $\lambda_0 \leq \lambda \leq \lambda_1$ is compact for any $\lambda_1$ then we have $\Lambda > \varepsilon > 0$ along any such set for some $\varepsilon$. Then along the integral curves of $X$ we have that $\lambda$ has derivative at least $\varepsilon$, we must then have $\lambda$ increase along the curve until in finite time it intersects $S_{\lambda_1}$. Thus by picking a point $p \in S_{\lambda_0}$ picking a value for $\lambda$ satisfying $\lambda_0 \leq \lambda \leq \lambda_1$ we get that the pair $(p, \lambda)$ form a coordinate chart for the region.

  Now letting $\tilde{g}$ be the metric on $S_{\lambda_0}$, we know that $X$ is a conformal killing field with factor $\varphi$ and so we have  
  \[
    \Phi^* \overline{g}(p,\lambda) 
    = \overline{g}(p,\lambda_0) \int_{0}^{t(p,\lambda)} 2\varphi ds
  \]
  where $\Phi^*$ is the flow of $X$ and $t(p,\lambda)$ is the time along the integral curve when we reach $(p,\lambda)$ from $(p,\lambda_0)$.
  Set $G(p,\lambda) = \int_{0}^{t(p,\lambda)} 2\varphi ds$ in this chart.

  Let $F_{t}$ denote the family of embeddings which solves the flow equation.
  Represent $F_t$ for some $t$ as $(p(s), \lambda(s))$, from which we can decompose 
  \[
    \nu = Y + z \frac{\partial}{\partial \lambda}
  \]
  where $z > \varepsilon > 0$ uniformly for some fixed $\varepsilon$ since we have lower bounds on $u = (X,\nu)$. We then must have for any chart $(s^1,\dots,s^n)$ on $\Sigma$ that $\partial_{s^i} (p(s), \lambda(s))$ is non-zero since $F_t$ is an embedding. Thus if $\partial_{s^i} p(s) = 0$ then we have $\partial_{s^i} \lambda(s) \neq 0$ and so $\left((\partial_{s^i} (p(s), \lambda(s))), \nu\right) \neq 0$ which is a contradiction. Thus by reparametrizing we can get $F_t = (p, \lambda(p^{-1}(p)))$ globally. Then by fixing any normal coordinates $p^1,\dots,p^n$ on $S_{\lambda_0}$ we get coordinates on $F_t(\Sigma)$. In these coordinates we have the following expression for the induced metric
  \[
    g_{ij} = G(p,\lambda) \tilde{g}_{ij} + H(p, \lambda) (d\lambda \otimes d\lambda)_{ij}
  \]
  where $H(p,\lambda) = \overline{g}\left( \frac{\partial}{\partial \lambda}, \frac{\partial}{\partial \lambda} \right)$.

  This then gives the following expression for the determinant and inverse metric 
  \begin{gather*}
    \det g = G^n(p, \lambda) \left( 1 + \frac{H(p, \lambda)}{G(p, \lambda)} |\tilde{\nabla} \lambda|^2 \right) \det{\tilde{g}_{ij}},
    \\
    g^{ij} 
    = \frac{1}{G(p, \lambda)} \left( \tilde{g}^{ij} 
    - \frac{H(p, \lambda)}{G(p, \lambda)}\frac{(\tilde{\nabla} \lambda \otimes \tilde{\nabla} \lambda)^{ij}}{1 + \frac{H(p, \lambda)}{G(p, \lambda)}|\tilde{\nabla} \lambda|^2} \right).
  \end{gather*}

  We now have that 
  \[
    \nu = \frac{1}{\sqrt{\left( G(p,\lambda) + H(p,\lambda) |\tilde{\nabla} \lambda|^2 \right)}}(-\tilde{\nabla} \lambda, 1)
  \]
  and so
  \[
    u = \overline{g}\left( X, \frac{\partial}{\partial \lambda} \right) \frac{1}{\sqrt{\left( G(p,\lambda) + H(p,\lambda) |\tilde{\nabla} \lambda|^2 \right)}}
  \]
  and since we have a lower bound on $u$ this gives us an upper bound $c_1$ on $\tilde{\nabla} \lambda$.

  In these leaf coordinates we then have
  \begin{gather*}
    \Delta_g \lambda = \frac{1}{\sqrt{\det g}}\partial_{p^i}\left( g^{ij} \sqrt{\det g} \partial_{p^j} \lambda \right)
    \\ \partial_t \lambda 
    - u \Delta_g \lambda 
    = - u\frac{2}{\varphi^2 |X|^2} X(\Lambda \varphi^2)(|X|^2 - u^2)
    + 4u\frac{\Lambda }{\varphi}(X(\varphi) - u\nu(\varphi)).
  \end{gather*}
  Now we write 
  \[
    \Delta_g \lambda = \frac{1}{\sqrt{\det{g}}} \partial_{p^i} (A^i(p,\lambda,\tilde{\nabla} \lambda))
  \]
  where 
  \[
    A^i(x,z,p) 
    =
    \sqrt{G^n(x, z) \left( 1 + \frac{H(x, z)}{G(x, z)} |p|^2 \right)}
    \frac{1}{G(x, z)} \left( \tilde{g}^{ij} (x)
    - \frac{H(p, \lambda)}{G(p, \lambda)}\frac{p_i p_j}{1 + \frac{H(x, z)}{G(x, z)}|p|^2} \right)
    \tilde{g}_{jk} p_k.
  \]
  Simplifying we get 
  \[
    A^i(x,z,p) = \sqrt{G^n(x, z) \left( 1 + \frac{H(x, z)}{G(x, z)} |p|^2 \right)}
    \frac{1}{G(x, z)} p_i \left( 1
    - \frac{H(p, \lambda)}{G(p, \lambda)}\frac{|p|^2}{1 + \frac{H(x, z)}{G(x, z)}|p|^2} \right)
  \]
  which then further simplifies into 
  \begin{align*}
    A^i(x,z,p) &= \sqrt{G^n(x, z) \left( 1 + \frac{H(x, z)}{G(x, z)} |p|^2 \right)}
    \frac{1}{G(x, z)} p_i \left(\frac{1}{1 + \frac{H(x, z)}{G(x, z)}|p|^2} \right)
    \\
    &=  \sqrt{\frac{G^{n-2}(x, z)}{1 + \frac{H(x, z)}{G(x, z)}|p|^2} } p_i.
  \end{align*}
  Then 
  \[
    \partial_{p^j} A^i(x,z,p) 
    =
    \delta^i_j \sqrt{\frac{G^{n-2}(x, z)}{1 + \frac{H(x, z)}{G(x, z)}|p|^2} }
    -
    \sqrt{\frac{G^{n-2}(x, z)}{1 + \frac{H(x, z)}{G(x, z)}|p|^2} }\frac{\frac{H(x, z)}{G(x, z)}p_j p_i}{1 + \frac{H(x, z)}{G(x, z)}|p|^2}
  \]
  and so our uniform bounds on $\tilde{\nabla} \lambda$ give us a uniform ellipticity on this matrix, that is $c_2(c_1) I \leq \partial_{p^j} A^i(x,z,p) \leq c_3(c_1) I$.

  We now set
  \[
    B(x, \lambda, \tilde{\nabla} \lambda) 
    =
    -u\frac{2}{\varphi^2 |X|^2} X(\Lambda \varphi^2)(|X|^2 - u^2)
    +4u\frac{\Lambda }{\varphi}(X(\varphi) - u\nu(\varphi))
  \]
  and we can rewrite our evolution equation for $\lambda$ as 
  \begin{align*}
    \partial_t \lambda - \frac{u}{\sqrt{\det g}} (\partial_{p^i} A^i(p, \lambda, \tilde{\nabla}\lambda)) =
    B(x, \lambda, \tilde{\nabla} \lambda) .
  \end{align*}
  We then can rewrite this as 
  \[
    \partial_t \lambda - \frac{u}{\sqrt{\det g}}(\partial_{x^i} A^i(x,z,p) + \partial_{z} A^i(x,z,p) (\tilde{\nabla} \lambda)^i + \partial_{p^j} A^i(x,z,p) \partial_{i} \partial_{j} \lambda) =
    B(x, \lambda, \tilde{\nabla} \lambda)
  \]
  Now $\partial_{x^i} A^i(x,z,p), \partial_{z} A^i(x,z,p)$ both involve only spacial derivatives of ambient functions and thus are bounded in our compact set by some constants $c_4(\lambda_0,\lambda_1),c_5(\lambda_0,\lambda_1)$ respectively.

  We can now pick any open ball $\Omega = B_R(p)$ and $\Sigma_{\lambda_0}$, and a compact ball $\Omega' = \overline{B_{R/2}(p)}$, then by Theorem 1.1 on page 517 of \cite{ladyzhenskaia1968linear} we get that 
  \[
    [\tilde{\nabla} \lambda]_{\alpha;\Omega'} \leq 
    C(n,c_1,\lambda_0,\lambda_1,R)
  \]
  We thus get that all the coefficients of this parabolic equation are in $H^{\alpha, \alpha/2}(\Omega)$, with norms bounded by $C(n,c_1,\lambda_0,\lambda_1,R)$.

  Then by Theorem 10.1 on page 351 of \cite{ladyzhenskaia1968linear} we get 
  \[
    |\lambda|_{\Omega'}^{(2 + \alpha)} \leq C(n,c_1,\lambda_0,\lambda_1,R)
  \]
  then this gives us that 
  \[
    |\tilde{\nabla} \lambda|_{1+\alpha;\Omega'}
    \leq 
    C(n,c_1,\lambda_0,\lambda_1,R)
  \]
  which we now use again to get 
  \[
    |\lambda|_{\Omega'}^{(3 + \alpha)} \leq C(n,c_1,\lambda_0,\lambda_1,R)
  \]
  and we can continue this process to get uniform bounds on all $C^\alpha$ norms of $\lambda$, since we can do this around any point without changing $R$ we get uniform bounds over all of $\Sigma$.

  \bibliographystyle{abbrv}

  \end{document}